\documentclass[11pt]{article}
\usepackage{graphicx} 
\usepackage{tikz} 
\usepackage{tikz-cd} 
\usepackage{amsthm} 
\usepackage{amsmath, amsfonts, amssymb,  mathtools} 
\usepackage{pinlabel}
\usepackage{subcaption}
\usepackage{url}

\newcommand{\Addresses}{{
  \bigskip
  \footnotesize
  
  \textsc{Department of Mathematics, Georgia Institute of Technology,
    Atlanta, Georgia 30332}\par\nopagebreak
  \textit{E-mail address}: \texttt{seaneli@gatech.edu}

}}

\theoremstyle{plain}
\newtheorem{theorem}{Theorem}[section]
\newtheorem{conjecture}{Conjecture}[theorem]
\newtheorem{corollary}[theorem]{Corollary}
\newtheorem{lemma}[theorem]{Lemma}
\newtheorem{proposition}[theorem]{Proposition}
\newtheorem{question}[theorem]{Question}
\theoremstyle{definition}
\newtheorem{definition}[theorem]{Definition}

\newtheorem*{note}{Note}
\newtheorem{remark}[theorem]{Remark}

\DeclarePairedDelimiter\abs{\lvert}{\rvert}%
\DeclarePairedDelimiter\norm{\lVert}{\rVert}%
\DeclarePairedDelimiter\set{\{}{\}}%
\makeatletter
\let\oldabs\abs
\def\abs{\@ifstar{\oldabs}{\oldabs*}}
\let\oldnorm\norm
\def\norm{\@ifstar{\oldnorm}{\oldnorm*}}
\let\oldset\set
\def\set{\@ifstar{\oldset}{\oldset*}}
\makeatother

\newcommand{\ve}{\varepsilon}

\DeclareMathOperator{\coker}{coker}

\newcommand{\into}{\hookrightarrow}

\newcommand{\A}{\mathcal{A}}

\newcommand{\cF}{\mathcal{F}}

\newcommand{\cK}{\mathcal{K}}

\newcommand{\M}{\mathcal{M}}

\newcommand{\cR}{{\mathcal{R}}}

\newcommand{\ff}{\mathfrak{f}}

\newcommand{\R}{\mathbb{R}}

\newcommand{\bbC}{{\mathbb C}}

\newcommand{\bbN}{{\mathbb N}}

\newcommand{\bbR}{{\mathbb R}}

\newcommand{\bbZ}{{\mathbb Z}}

\def\HF {\mathit{HF}}

\newcommand\HFp {\HF^+}

\newcommand \HFm {\HF^-}

\newcommand \HFinf {\HF^{\infty}}

\def\HFred{\HF_{\operatorname{red}}}

\def\s{\mathfrak s}
\def\t{\mathfrak t}
\def\ff{\mathbb{F}}
\newcommand\HE{\mathit{HE}}

\def\coker{\operatorname{coker}}

\usepackage{overpic}
\title{Irreducible proper 2-knots from exotic open 2-handles
}
\author{Sean Eli}
\date{}
\begin{document}
\maketitle

\begin{abstract}
We construct infinite families of irreducible exotic proper knotted surfaces in $\R^4$, making progress on a question of Gompf. Here \textit{irreducible} means these surfaces are not end-sums of standard surfaces with exotic planes. To prove the topological equivalence, we give a highly flexible construction of exotic open 2-handles, which generalizes several similar constructions in the literature. We distinguish exotic surfaces through the genus functions and end Floer homology of their double branched covers. By studying these generalized handles further, we construct a new family of topologically slice links.
\end{abstract}

\section{Introduction} Exotically knotted surfaces in 4-manifolds have recently seen intense study. In \cite{gompfproper}, Gompf studies exotic \textit{proper 2-knots}: proper embeddings of noncompact surfaces into $\R^4$ that are topologically but not smoothly ambiently isotopic. One can often change the smooth type of a proper 2-knot by \textit{end-summing} with a topologically trivial exotic plane, which does not change the topological type. Question~4.5 of \cite{gompfproper} essentially asks, given a topological type of a proper 2-knot in $\R^4$, how many smooth proper 2-knots there are of this type, modulo end-summing with exotic planes. Our main result is the following.

\begin{theorem}\label{thm:mobius}
    There is an explicit infinite family of topologically standard exotic proper open M\"obius strips (resp.~open annuli) in $\R^4$, which are branch loci of double branched covering maps from distinct exotic smoothings of $\overline{\bbC P^2}\setminus pt$ (resp.~$S^2\times \R^2$). These are irreducible, i.e.~not end sums of smoothly standard surfaces with {any} exotic planes, and stay smoothly nonisotopic after end-summing with any {small} exotic planes. Moreover, letting $A_n$ denote the annuli, $A_n\natural P$ is not isotopic to $A_k$ for any $n,k$ and {any} exotic plane $P$.
\end{theorem}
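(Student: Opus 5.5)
The plan is to build the surfaces downstairs from exotic smoothings upstairs, using the quotient by a smooth involution. First, construct exotic open 2-handles: take a knot $K\subset S^3$ with suitable symmetry (strongly invertible), and glue a Casson-type or Freedman-type open 2-handle, i.e.\ $D^2\times\R^2$ with an exotic smooth structure, along a neighborhood of $K$. Equivalently, let $X_K$ be the interior of the trace of $(-1)$-surgery (resp.\ $0$-surgery) on $K$ with the core replaced by a topologically flat but smoothly wild disk. For topologically slice (or suitably chosen) $K$, Freedman's theory makes $X_K$ homeomorphic to $\overline{\bbC P^2}\setminus pt$ (resp.\ $S^2\times\R^2$). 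The involution extends over the handle construction when the construction is done equivariantly, e.g.\ via an equivariant Kirby diagram with infinitely many handles arranged symmetrically about the axis. The quotient of $X_K$ by the extended involution is then $\R^4$. For this I would follow the standard Akbulut--Kirby picture for the quotient of a symmetric 2-handle trace (a 4-ball), plus the remaining handles in quotient pairs. The branch set is a proper surface: a Möbius band for odd framing and an annulus for even. Topological standardness follows because the topological double cover is standard, and the equivariant Freedman construction gives a topological isotopy to the standard surface. Alternatively, one can argue directly that the complement has $\bbZ$ or $\bbZ/2$ fundamental group with standard end structure and then invoke classification results for proper topological 2-knots.

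Next, distinguish the surfaces. If $F_n$ were smoothly isotopic to $F_k$, their double branched covers $X_n, X_k$ would be diffeomorphic. So I would compute invariants of the covers: the genus function on the generator of $H_2$ (minimal genus of a smoothly embedded representative of the generator), and the end Floer homology $\HE$ of the end. The genus function would be bounded below via adjunction-type inequalities such as Ozsváth--Szabó $\tau$ or $d$-invariant bounds of the knots $K_n$ used, and bounded above by explicit surfaces. Choosing $K_n$ with $\tau(K_n)=n$, say, makes these invariants take pairwise different values.

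Irreducibility is the subtle part. End-summing $F$ with an exotic plane $P$ changes the double cover by an end-sum with the double branched cover $\widetilde P$ of $P$, which is an exotic $\R^4$. Two things must be checked. First, the genus function of a homology class in the compact-support part is unaffected by end-summing with an exotic $\R^4$ when that $\R^4$ is small, i.e.\ embeds in standard $\R^4$. For large $\R^4$, I would instead use the end Floer homology, which should be additive or insensitive in a controllable way under end-sum. In particular, the $\HE$ of $\widetilde P$ has $H_2=0$ and contributes no $\HF$ classes detecting the generator. Second, to rule out $F_n\cong F_0\natural P$ with $F_0$ standard, I would show that $X_n$ has genus function strictly larger than that of $X_0\natural\widetilde P$. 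This reduces to showing that the genus function of the standard manifold end-summed with any exotic $\R^4$ stays at its minimal value, since a minimal sphere or torus lies in the compact part away from the summing ray. That minimal value is then smaller than the value for $X_n$. The same argument handles $A_n\natural P\not\cong A_k$: an end-sum only moves surfaces into a region where the standard minimal representatives already exist, so the genus function of $A_n\natural P$ is at most that of $A_n$. I would compare $A_n$ and $A_k$ via the $\HE$ invariant in one direction and the genus function in the other, making the invariant monotone in $n$.

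The main obstacle is this last step: the genus lower bound must survive end-summing with arbitrary, possibly large, exotic planes, where no adjunction inequality is available a priori. I would try first to prove the genus bound with an invariant that localizes to a compact neighborhood of the surface, namely Heegaard Floer or $\HE$ of an exhausting sequence of compact pieces. End-summing changes only the end and not such compact pieces, so the bound would persist.
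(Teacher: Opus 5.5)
There are two genuine gaps: the construction does not produce the required topological types, and the argument for large exotic planes fails.

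\textbf{The construction.} Glue an open 2-handle that is homeomorphic rel attaching region to $D^2\times\R^2$ (a Casson handle, say) to $B^4$ along a knot $K$, and delete the remaining boundary. Topologically this is the interior of the trace $X_{-1}(K)$ (resp.\ $X_0(K)$), whose end is $S^3_{-1}(K)\times\R$ (resp.\ $S^3_0(K)\times\R$). For nontrivial $K$ this end has the wrong fundamental group at infinity, by Gordon--Luecke (resp.\ Property R). So $X_K$ is not homeomorphic to $\overline{\bbC P^2}\setminus pt$ (resp.\ $S^2\times\R^2$), whether or not $K$ is topologically slice. Consequently no branch set with such a cover is topologically standard.

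Your genus bound comes from $\tau(K_n)=n\neq 0$, so $K$ cannot be the unknot, and the plan is internally inconsistent. You might instead read $X_K$ as a neighborhood of the topological sphere formed by the core and a topological slice disk. That would need an equivariant topological slice disk, and a proof that the quotient (an open subset of $B^4$) is the \emph{standard} smooth $\R^4$; neither is addressed. Also, a homeomorphism of double covers does not by itself standardize the branch set.

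The paper attaches along the \emph{unknot} and builds the exoticness into the handle. $BH_m$ is a $\pi$-symmetric generalized Casson handle whose first stage is a genus-$m$ surface block with a triangular 2-handle and copies of $CH_+$. The pieces are then handled as follows:
\begin{enumerate}
\item $BH_m$ is topologically standard by Theorem~\ref{thm:open2handles}.
\item The quotient is an infinite end sum of copies of $\mathrm{int}(S^1\times B^3\cup CH_+)\cong\R^4$, hence smoothly standard.
\item The branch set becomes standard after replacing each $CH_+$ by a 2-handle (a homeomorphism of $\R^4$) followed by a smooth isotopy.
\item The bound $g\ge m$ comes from Stein structures with $|\langle c_1,[\Sigma]\rangle|=2m-1$ or $2m-2$ and the Lisca--Mati\'c adjunction inequality, not from a knot invariant.
\end{enumerate}

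\textbf{The final claim for large $P$.} A genus bound localized on compact pieces does not survive end sum. A compact surface in $A_n\natural\widetilde P$ may run into the $\widetilde P$ summand, and end-summing with a large $\R^4$ can lower the genus function; the paper notes $\M_2$ might be $\M_m\natural R$ for a large $R$. So neither the genus function nor a hoped-for additivity of $\HE$ separates $A_n\natural P$ from $A_k$.

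The missing idea is an embedding obstruction. Casson's embedding $\psi$ places $BH_m$ inside a standard 2-handle rel attaching region, so $A_m\subset S^2\times\R^2\subset\R^4$. By contrast, $A_n\natural R'$ with $R'$ large contains a compact set that does not embed in $\R^4$.

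Note also that ``small'' means every compact subset embeds in $\R^4$, not that the whole manifold does (the latter is open). The small case works by pushing a given compact surface into $A_m\natural R'$, with $R'$ the interior of a compact ball in $R$ smoothed near a boundary point via Quinn.

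Your irreducibility step does agree with the paper: a smooth sphere generates $H_2$ of a standard manifold end-summed with any $\R^4$.
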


 One of the M\"obius strips is shown in Figure~\ref{fig:mobiusfront}. Here, a \textit{small exotic plane} is one whose branched double cover is a small exotic $\R^4$, i.e. one whose compact codimension-0 submanifolds smoothly embed in $\R^4$. Our M\"obius strips and open annuli are distinct from Gompf's \cite[Theorem 4.2]{gompfproper}, which are end-sums of standard surfaces with exotic planes. We distinguish them using the genus function of their double branched covers. A key ingredient of Theorem~\ref{thm:mobius} is a new, flexible construction of exotic open 2-handles. These 2-handles can be made to satisfy many properties, for example the exotic open 2-handles $BH_m$ used to prove Theorem \ref{thm:mobius} are Stein, symmetric, and have topologically standard branch sets. 
 To prove these open 2-handles are topologically standard, we standardize a large class of `generalized Casson handles', defined by Casson's criteria for having the right proper homotopy type. This class includes all Casson handles, a generalization called \textit{triangular Casson handles} (defined in Section \ref{sec:handle}), Gompf's generalized Casson handles \cite{gompfsteinisotopy}, and all \textit{open infinite towers} of disk embedding theory \cite{diskembedding, freedmanquinn}. 
\begin{theorem}\label{thm:open2handles}
       Any generalized Casson handle $(V,\partial V)$ is homeomorphic to an open 2-handle $(D^2\times \bbR^2, S^1\times \bbR^2)$, and therefore has a topologically flat core disk bounded by its attaching circle.
\end{theorem}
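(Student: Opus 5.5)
The plan is to reduce to Freedman's theorem that Casson handles are homeomorphic to open 2-handles, in its modern form from disk embedding theory \cite{diskembedding, freedmanquinn}. The relevant version says that any \emph{infinite convergent tower}, or more generally any open infinite tower of the kind built there, contains a topologically embedded flat 2-handle with the same attaching region. Its interior complement is then controlled so that the whole tower is homeomorphic rel boundary to $(D^2\times\bbR^2, S^1\times\bbR^2)$. Since a generalized Casson handle is defined here by Casson's criteria, that is, by having the correct proper homotopy type with the appropriate $\pi_1$-null conditions at each stage, I would first extract from the definition a compact exhaustion $V_1\subset V_2\subset\cdots$ of $V$ rel the attaching region. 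In this exhaustion each $V_{k+1}\setminus V_k$ is a cobordism in which the relevant loops of $\partial^+ V_k$ bound immersed disks, or more generally capped surfaces, in the next stages.

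The key steps, in order, are as follows.
\begin{enumerate}
\item Use the defining criteria to show that each stage $V_k$ has a spine consisting of an attaching disk with immersed disks or capped surfaces attached, so that $V$ is a ``tower'' in the sense of \cite[Ch.~27--28]{diskembedding}, possibly with irregular stage types (triangular, Gompf-type, and so on).
\item Apply the reembedding theorem to find inside $V$ a genuine Casson handle, or a convergent 7-stage tower with an embedded infinite tower, sharing the attaching circle. The point is that geometrically dual capped surfaces at each stage can be traded for kinky handles after contraction and pushing off. Freedman's theorem then gives a topologically flat core disk $\Delta\subset V$ bounded by the attaching circle; this already yields the last clause of Theorem~\ref{thm:open2handles}.
\item Upgrade to a homeomorphism of pairs. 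Take a closed tubular neighborhood $N\cong D^2\times D^2$ of $\Delta$, flat topologically by Freedman--Quinn normal bundles, and study $W=V\setminus \mathrm{int}\,N$. This is a noncompact 4-manifold with boundary $S^1\times D^2\cup$ (the frontier), whose end is the end of $V$. Using the proper homotopy conditions, show that $W$ is a proper $h$-cobordism-type object: the end of $V$ is collared, homeomorphic to $S^3\times[0,\infty)$ minus the attaching solid torus. This can be done by checking that the end is tame, simply connected at infinity relative to the attaching region, and has the homology of $S^3$, and then invoking Freedman's characterization of $\bbR^4$ ends/Quinn's end theorem. Gluing the collar to $N$ gives $V\cong D^2\times\bbR^2$ rel $\partial$.
\end{enumerate}

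The main obstacle I expect is step 2 in full generality. Arbitrary generalized handles need not have uniform stage structure, and the reembedding argument needs enough disjoint capped surfaces at each level, which must be checked using only the abstract $\pi_1$ conditions. If that is awkward, my fallback is to bypass reembedding: show directly that $V$ is an open infinite tower satisfying the hypotheses of the ``Casson handles are standard'' proof via design/decomposition space theory \cite{freedmanquinn}. I would then verify that the Bing-type decomposition argument only uses that each stage's attaching link is nullhomotopic in the next stage, which is exactly Casson's criterion.
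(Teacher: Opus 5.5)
\textbf{There is a genuine gap, and it sits in Step 2.} A generalized Casson handle in the sense of Definition~\ref{def:gch} carries no geometric tower structure. Each $N_k$ is an arbitrary compact cobordism, subject only to $N_k\cup H_{k+1}\cong H_k$ rel $\partial_-$ and to $\partial_-N_k$ being componentwise nullhomotopic in $N_k$.

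\begin{itemize}
\item \emph{Step 1 has nothing to extract.} There are no kinky handles, surface stages, double point loops or caps to feed into a reembedding theorem. A nullhomotopy yields an immersed disk in $N_k$. But nothing controls that disk's double point loops in the \emph{complement} of the disk. Nor does anything supply the disjoint capped surfaces or duals that the reembedding machinery of \cite{diskembedding, freedmanquinn} takes as input.
\item \emph{Your fallback is not correct.} Freedman's decomposition-space proof uses the explicit geometry of towers: reembedding towers inside towers, and the design and gap constructions. It does not rely merely on each stage's attaching link being nullhomotopic in the next. That nullhomotopy condition is exactly Casson's criterion, and by itself it is what yields the proper homotopy type (Theorem~\ref{thm:abstractcasson}). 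Upgrading that to a homeomorphism is the entire content of the theorem.
\item \emph{Step 3 has separate problems.} With its boundary $S^1\times\bbR^2$ included, the end of $V$ has stable $\pi_1=\bbZ$. So Freedman's characterization of $\bbR^4$ does not apply, and ``simply connected at infinity relative to the attaching region'' is not the hypothesis of any end theorem.
\item \emph{A flat core disk does not give a product structure.} Having $\Delta$ does not make $V\setminus\mathrm{int}\,N$ a product. You would still have to prove that $\partial(V\setminus\mathrm{int}\,N)$ includes into $V\setminus\mathrm{int}\,N$ as a proper homotopy equivalence. You would then need a $\pi_1=\bbZ$ end theorem or proper $h$-cobordism theorem. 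Once you have that proper homotopy control, Step 2 is superfluous.
\end{itemize}

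\textbf{The missing idea is to avoid looking for geometric structure inside $V$ at all.} The paper proceeds as follows.
\begin{itemize}
\item It takes Casson's canonical open embedding $\psi\colon V\hookrightarrow h=D^2\times\bbR^2$, which extends the identity on the attaching region. It forms the 5-manifold $W=(h\times[0,1))\cup(V\times\{1\})$.
\item It shows $h\times\{0\}\hookrightarrow W$ is a proper homotopy equivalence. This uses the Casson--Siebenmann criterion (Theorem~\ref{thm:cassonproper}) on the exhaustion $\cK_n$, with the required homotopies supplied by Lemma~\ref{lem:homotopy}. That lemma is where the nullhomotopy hypothesis is used: vanishing linking numbers force $\pi_2(H_{n+1},\partial_+H_{n+1})\to\pi_2(H_n,\partial_+H_n)$ to be zero.
\item It shows $V\times\{1\}\hookrightarrow W$ is a proper homotopy equivalence, using Casson's result that $J\times\bbR^2\hookrightarrow V$ is one.
\item The Freedman--Quinn proper $h$-cobordism theorem with stable $\pi_1=\bbZ$ at infinity (Theorem~\ref{thm:properhcobordism}) then gives $V\cong h$ rel boundary.
\end{itemize}
The flat core disk is simply the image of $D^2\times 0$. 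It is an output of the argument, not an input.
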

We prove Theorem \ref{thm:open2handles} by constructing a proper $h$-cobordism to the standard open 2-handle, rel boundary, and invoking the fundamental group $\bbZ$ at infinity proper $h$-cobordism theorem due to Freedman-Quinn  
\cite[Corollary 7.3B]{freedmanquinn}. A similar Theorem is stated in Freedman's ICM address \cite[Theorem 4]{freedmanICM}, and claimed to be a consequence of a proper $s$-cobordism theorem with fundamental group $\bbZ$ at infinity, but a proof was not given. In particular, it was not shown how to obtain the proper $h$-cobordism, and we are not aware of a proof in the literature. We give a detailed proof of Theorem~\ref{thm:open2handles} in Section~\ref{sec:embeddings}. 

Theorem \ref{thm:open2handles} has the following applications. First, it allows us to construct noncompact exotica by swapping 2-handles for generalized Casson handles, detectable with the genus function \cite{gompfgenera} or end Floer homology \cite{elihomlidman, gadgil}. Second, it establishes the existence of topologically flat core disks in many settings other than just Casson handles and skyscrapers \cite{diskembedding}. Recall that \textit{infinite towers} \cite[Definition 12.16]{diskembedding}, \cite{freedmanquinn} are generalizations of Casson handles better adapted to non-simply connected situations: while a Casson handle is an infinite stack of stages of thickened immersed disks, infinite towers consist of \textit{stories}, which in turn have stages of thickened embedded compact surfaces and immersed disks. In \cite{diskembedding} and \cite{freedmanquinn}, infinite towers with at least four surface stages per story, and exponentially growing numbers of surface stages (called \textit{skyscrapers} or \textit{convergent infinite towers}), are shown to compactify to manifolds homeomorphic to the closed 2-handle, and hence have flat core disks. 
As a consequence of Theorem \ref{thm:open2handles}, we find \textit{all} open (i.e. with all boundary deleted except the attaching region) infinite towers are homeomorphic to open 2-handles, and thus have flat core disks bounded by their attaching circles. 
\begin{corollary}\label{cor:skyscrapers}
     All open infinite towers are homeomorphic to open 2-handles, and hence have topologically flat core disks.
 \end{corollary}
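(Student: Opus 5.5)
The plan is to deduce Corollary~\ref{cor:skyscrapers} directly from Theorem~\ref{thm:open2handles}. The only real task is to check that every open infinite tower $(T,\partial T)$ is a generalized Casson handle in the sense of the paper. That sense is Casson's criteria for having the proper homotopy type of an open 2-handle, together with the appropriate behavior of the attaching region. Once this is checked, Theorem~\ref{thm:open2handles} gives a homeomorphism $(T,\partial T)\cong (D^2\times\bbR^2,S^1\times\bbR^2)$. The core $D^2\times 0$ then pulls back to a topologically flat disk bounded by the attaching circle.

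The first step is to describe $T$ as a nested union $T=\bigcup_k T_k$ of compact stages, with the open boundary deleted. Each $T_k$ is obtained from $T_{k-1}$ by attaching either a thickened embedded compact surface stage or a thickened immersed disk (kinky handle) stage. The attaching curves are a symplectic basis of the previous surface's first homology, or the double-point loops of the previous kinky handles. The second step is to verify the criteria one at a time.
\begin{enumerate}
\item Each $T_k$ deformation retracts onto the attaching region union a 2-complex. The inclusions $T_{k-1}\into T_k$ kill the generators of $\pi_1$ coming from the previous stage, namely the surface $H_1$ generators and the double-point loops. Consequently $T$ is simply connected and the end is \emph{eventually} standard.
\item Since every stage of $T$ is attached to the previous stage, the end is connected.
\item The fundamental group at infinity is $\bbZ$. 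This follows because the complement of a compact set deformation retracts to the boundary region near the attaching circle's meridional direction, exactly as in the Casson handle case.
\item $H_*(T,\partial T)$ agrees with that of an open 2-handle, generated by the core class.
\end{enumerate}
Concretely, the homology claim follows from a Mayer--Vietoris or direct limit argument over the stages. At each stage, the new handles are attached along curves that are independent in $H_1$ of the previous boundary. Hence they cancel the previous surface's $H_1$ and create no new $H_2$ that survives to the limit.

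The main obstacle is the end structure: showing that the end of an arbitrary open infinite tower has the required $\pi_1$ at infinity and is tame in Casson's sense. This is harder than for Casson handles because towers interleave surface stages, which have large $\pi_1$ and $H_1$, with disk stages. My first attempt would be to adapt the argument the paper uses for triangular Casson handles and Gompf's generalized Casson handles. The idea is to choose cofinal neighborhoods of infinity $T\setminus T_k$ only at \emph{story} boundaries, where the previous story's $\pi_1$ has been fully killed. Then, using the fact that each story's surface stages are attached along a complete symplectic basis, I would check that the inclusion-induced maps on $\pi_1$ of these neighborhoods stabilize to $\bbZ$, generated by the meridian of the attaching circle. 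No convergence or growth hypothesis, such as four or more surface stages or exponential growth, is needed. That is the point of the corollary relative to \cite{diskembedding,freedmanquinn}, since Theorem~\ref{thm:open2handles} handles the open case without compactifying.
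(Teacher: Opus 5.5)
Your reduction to Theorem~\ref{thm:open2handles} is the same as the paper's, whose proof is the single sentence that open infinite towers are generalized Casson handles. But you verify the wrong hypotheses, and this is a genuine gap. The ``Casson criteria'' in Definition~\ref{def:gch} are structural conditions, not invariants of the end. One needs compact pieces $N_k$ and disjoint unions of standard 2-handles $H_k$ such that (1) $N_k\cup H_{k+1}\cong H_k$ rel $\partial_-N_k$, (2) $\partial_-N_k\into N_k$ is componentwise nullhomotopic, and (3) $\partial_-N_1$ is connected.

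Your items 1--4 (simple connectivity, one end, $\pi_1^\infty\cong\bbZ$, the relative homology of an open 2-handle) do not imply these conditions, and they cannot replace them in the proof of Theorem~\ref{thm:open2handles}. That proof needs Casson's embedding $\psi\colon V\into H_1$ from Theorem~\ref{thm:abstractcasson} even to define $W=(h\times[0,1))\cup(V\times\{1\})$. It also needs Lemma~\ref{lem:homotopy}, which rests on condition (2), to produce the homotopies required by Theorem~\ref{thm:cassonproper}. Stable $\pi_1^\infty\cong\bbZ$ is an output of Step 2 there, not an input. So even a complete proof of your ``main obstacle'' would not let you invoke the theorem.

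Two intermediate claims are also off. The open tower has one end because the collar of the deleted upper boundary ties all branches together; stage-by-stage attachment alone would give the end space of the branching tree. And the inclusion of one surface stage into the next does not kill its $\pi_1$: a basis curve $a_i$ only becomes the boundary of the next surface, which is a nontrivial product of commutators.

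Here is the check you need instead. Let $N_k$ be the $k$-th layer of stories, i.e.\ all surface stages of those stories together with their cap (kinky handle) stage. Let $H_{k+1}$ consist of standard 2-handles attached along the tip circles, which are the double point loops of those caps, with their canonical framings.
\begin{enumerate}
\item For condition (1): attaching these 2-handles turns the caps into standard 2-handles. A thickened surface stage with standard 2-handles attached along a full symplectic basis (with the tower framings) is a standard 2-handle rel its attaching region. Induct down the story.
\item For condition (2): each top-stage basis curve bounds an immersed cap, so it is nullhomotopic. Each surface boundary is a product of commutators of its basis curves. By downward induction, the attaching circle of each story is nullhomotopic in that story.
\item Condition (3) holds since the first story has one attaching circle.
\end{enumerate}
This is exactly where cutting at story boundaries is forced. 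A single surface stage $\Sigma_g\times D^2$ violates (2), because $\partial\Sigma_g$ represents $\prod_i[a_i,b_i]\neq 1$ in the free group $\pi_1(\Sigma_g\times D^2)$. With (1)--(3) checked, Theorem~\ref{thm:open2handles} applies with no growth hypotheses, and the flat core disk is the preimage of $D^2\times 0$.
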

In particular, this applies to the simplest infinite tower with surface stages, which can be seen in Figure~\ref{fig:exoticR4skyscraper}. Moreover, the same holds for more general hybrids of infinite towers and Casson handles, modified by triangular families of curves (e.g. the $BH_m$ of Section \ref{sec:handle}). While the problem of explicitly describing a topologically flat core disk is notoriously difficult (see \cite{chapowell} and Problem 5.4 of the AIM problem list \cite{AIMlist}), it might be easier to find these disks in a suitable general setting. 
\begin{figure}[t!]
\centering
\begin{overpic}[width=0.8\textwidth, 
 unit=1mm, tics=5]{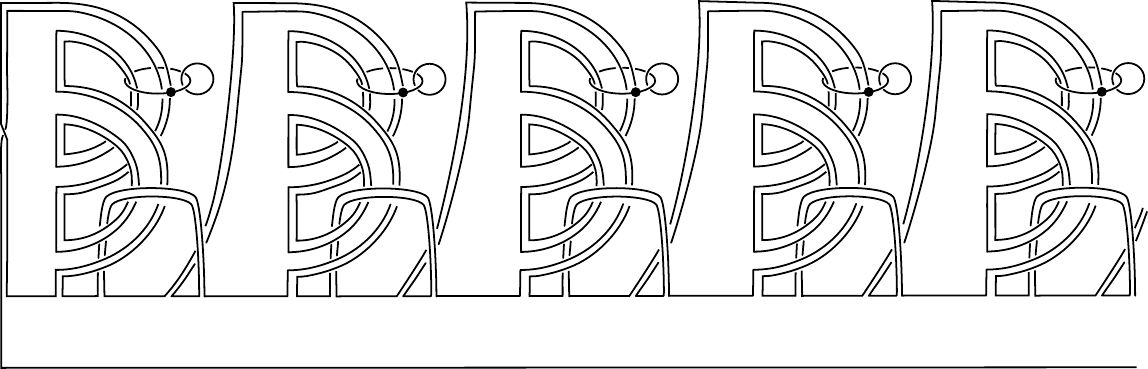}
\put(15, 27){$CH$} 
\put(35.5, 27){$CH$}
\put(55.5, 27){$CH$}
 \put(76, 27){$CH$}
 \put(96, 27){$CH$}
 \put(99, 15){$\hdots$}
  \put(99, 3){$\hdots$}
 \put(17, 21.7){$0$} 
 \put(37, 21.7){$0$} 
 \put(57, 21.7){$0$} 
 \put(77.5, 21.7){$0$} 
 \put(98, 21.7){$0$} 
 \end{overpic}
\caption{Exotic M\"obius strip in the standard $\R^4$. All Casson handles are $CH_+$. Figures~\ref{fig:branchset2} and \ref{fig:branchset3} show that replacing the Casson handles with 2-handles yields a standard M\"obius strip.}
\label{fig:mobiusfront}
\end{figure}
\begin{figure}[ht!]
\centering
\begin{overpic}[width=0.4\textwidth, 
 unit=1mm, tics=5]{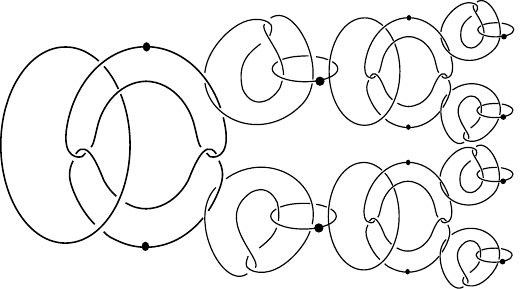}
\put(38, 2){0} 
\put(38, 47){0} 
\put(60, 2){0}
\put(60, 47){0}
 \put(99, 27){$\hdots$}
  \put(94, -1){$0$}
  \put(94, 15){$0$}
  \put(94, 37){$0$}
   \put(94, 52){$0$}
 \end{overpic}
\caption{The simplest infinite tower having surface stages.  }
\label{fig:exoticR4skyscraper}
\end{figure}
 
Generalized Casson handles also appear in the context of exotic planes. To see how, recall in the recent work \cite{endkhovanov} Teng constructed an exotic plane in $\R^4$ \textit{without} showing its double branched cover $\cR$ is an exotic $\R^4$. Note that it is an open question whether the double branched cover of an exotic plane in $\R^4$ can be the standard $\R^4$ \cite[Problem 6.3]{gompfproper}. Using the algorithm of Akbulut-Kirby \cite{akbulutkirby} we obtain the Kirby diagram for $\cR$ shown in Figure~\ref{fig:exoticR4}.
\begin{figure}[ht]
\centering
\begin{subfigure}{0.3\textwidth}
            \begin{overpic}[width=\textwidth, 
 unit=1mm, tics=5]{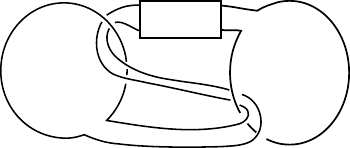}
         \put(45, 35){$-3$}
            \end{overpic}
            \caption{}
            \label{fig:12n582}
        \end{subfigure} 
        \hspace{5mm}
        \begin{subfigure}{0.6\textwidth}
            \begin{overpic}[width=\textwidth, 
 unit=1mm, tics=5]{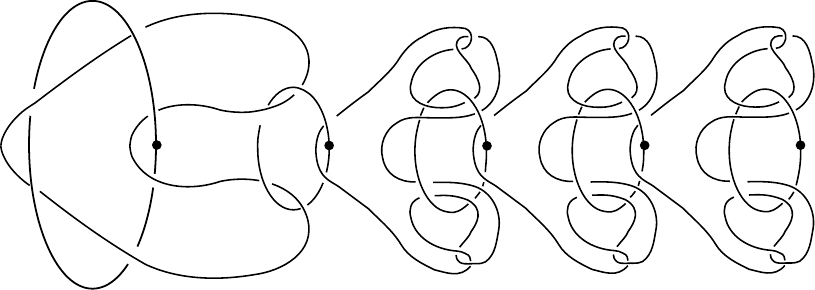}
\put(36, 32){$-1$} 
\put(60, 32){$0$} 
\put(79, 32){$0$} 
\put(99, 32){$0$} 
\put(100, 17.5){$\hdots$} 
 \end{overpic}
 \caption{}
 \label{fig:exoticR4}
        \end{subfigure}
\caption{(a): Ribbon knot 12n582. (b): A Kirby diagram for $\cR$. }
\end{figure}
This manifold is similar to a ribbon $\bbR^4$ \cite{bizacagompf, freedmandemichelis}: note the two leftmost 1-handles and the $-1$-framed 2-handle form a ribbon disk complement for the knot 12n582 (see Knotinfo \cite{knotinfo}) shown in Figure \ref{fig:12n582}. The infinite periodic piece resembles a Casson handle; it is obtained by modifying an infinite tower of self-plumbed 2-handles using a \textit{triangular family of curves}, discussed in Section~\ref{sec:handle}. 
We show the following:
\begin{theorem}\label{theorem:exoticR4}
    The manifold $\cR$ is a Stein exotic $\bbR^4$ that embeds in the standard $\bbR^4$. Moreover $\cR$, $\overline{\cR}$ and their end sum $\cR\natural \overline{\cR}$ are three distinct exotic $\R^4$'s with double branched covering actions, hence their branch sets are distinct exotic planes.
\end{theorem}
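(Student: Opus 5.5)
The plan is to establish, in order, that $\cR$ is topologically $\R^4$, that it is Stein, that it embeds in the standard $\R^4$, that it is exotic, and finally that $\cR$, $\overline{\cR}$ and $\cR\natural\overline{\cR}$ are pairwise distinct.

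\textbf{Topological type.} Read Figure~\ref{fig:exoticR4} as a ribbon disk complement $X$ for 12n582 with an infinite periodic piece $V$ attached along the meridian-type curve. The first step is to check that $V$ satisfies Casson's proper homotopy criteria, so that it is a generalized Casson handle in the sense of Section~\ref{sec:handle}; triangular modifications of a self-plumbed tower should preserve these criteria. Theorem~\ref{thm:open2handles} then says $V$ is homeomorphic to an open 2-handle rel attaching circle. Hence $\cR$ is homeomorphic to $X$ with an open 2-handle attached along the curve that cancels the ribbon disk complement's $\pi_1$. This is the interior of a contractible manifold obtained by adding a 2-handle to a ribbon disk exterior, which in the ribbon case is $B^4$. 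Therefore $\cR\approx\R^4$, consistent with the fact that it is a double branched cover of $\R^4$ over Teng's topologically trivial plane.

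\textbf{Stein structure and embedding.} Draw the diagram in Legendrian position with the 1-handles in standard form. Each 2-handle should have framing $\mathrm{tb}-1$, which can be arranged because the framings are $-1$ and $0$ and the clasps can be realized with negative twisting. Gompf's criterion for infinite handlebodies then makes $\cR$ Stein. For the embedding, replace each clasp by a crossing change, or equivalently note that $V$ embeds in a standard 2-handle since the self-plumbings of each stage can be undone. This makes $\cR$ sit inside $X\cup(\text{2-handle})=B^4$.

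\textbf{Exoticness and distinctness.} Here I would use the genus function, or end Floer homology as in \cite{elihomlidman}. A Stein structure gives an adjunction inequality for classes in compact subsets. The aim is to find a compact subset $K\subset\cR$ containing a homology class of square zero or $-1$ near the end that cannot be represented by a sphere in $\cR$, even though it can be in $\R^4$ after pushing off. More cleanly, I would argue as for ribbon $\R^4$'s \cite{bizacagompf}: if $\cR$ were standard, the compact piece $K$ would embed in $S^4$, and hence in a blowup or Stein filling where the slice-Bennequin or adjunction bound is violated. To distinguish $\cR$ from $\overline{\cR}$, use orientation asymmetry. $\cR$ embeds in $\R^4$, so its compact pieces embed in $\overline{\bbC P^2}$-type closed manifolds only one way. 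Comparing genus functions of the corresponding classes under orientation reversal separates them. For the end sum, genus functions are additive-ish under $\natural$: $\cR\natural\overline{\cR}$ contains compact pieces of both, so it satisfies both obstructions and differs from each summand. This is the same scheme used to distinguish $R$, $\overline R$ and $R\natural\overline R$ for the Freedman–DeMichelis–Gompf ribbon $\R^4$'s.

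The branch set claim then follows, since the covering involution is visible symmetrically in the diagram. \textbf{Main obstacle:} producing the explicit compact subset and the genus bound that certifies exoticness. I would first try to bound the genus of the $0$-framed first-stage class via the Stein adjunction inequality, using the computed rotation numbers.
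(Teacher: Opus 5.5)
Your topological identification of $\cR$, the embedding into $\R^4$ (via Casson's embedding of a generalized Casson handle into a standard 2-handle rel attaching region), and the branched covering symmetry are in line with the paper. Citing Gompf's standard-form criterion for the Stein structure is also the right tool. However, ``the framings are $-1$ and $0$ and the clasps can be realized with negative twisting'' does not show the framings are $\mathrm{tb}-1$. The paper needs a slide under a 1-handle that turns the $-1$-framed handle of $X_{-1}$ into a $-3$-framed one with $\mathrm{tb}=-2$, and it stabilizes the $Tk$ attaching curves.

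The genuine gap is exoticness and distinctness, which is the real content of the theorem, and the mechanism you propose cannot work. The manifolds $\cR$, $\overline{\cR}$ and $\cR\natural\overline{\cR}$ are contractible, so $H_2=0$ and their genus functions carry no information. The Stein adjunction inequality applied to a null-homologous surface of positive genus just says $2g-2\ge 0$. A class in $H_2(K)$ for compact $K\subset\cR$ dies in $\cR$, so ``cannot be represented by a sphere in $\cR$'' has no content. Likewise ``if $\cR$ were standard, $K$ would embed in $S^4$'' is vacuous, because you have already shown all of $\cR$ embeds in $\R^4$. For a small exotic $\R^4$ the question is not whether compact pieces embed in $\R^4$, but whether they can be separated from the end by a smooth $S^3$. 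Detecting that needs an invariant of the end, and the paper says explicitly that the genus function cannot directly show $\cR$ is exotic. Your orientation-asymmetry and ``additive-ish under $\natural$'' arguments rest on the same nonexistent genus obstructions, so they distinguish nothing.

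What is missing is (i) an end invariant, and (ii) an asymmetry between $\bbC P^2$- and $\overline{\bbC P^2}$-stabilization. For (i) the paper uses end Floer homology:
\begin{itemize}
\item $Y_0=S^3_0(K)$, for the genus-2 knot 12n582, carries a taut foliation (Gabai), so $Y_0\times I$ is a weak symplectic filling.
\item The Stein, admissible exhaustion of $TH$ is stacked on it and capped (Eliashberg, Etnyre) to closed symplectic manifolds $Z_j$.
\item Ozsv\'ath--Szab\'o symplectic nonvanishing, plus a grading argument using $\langle c_1(\mathfrak{t}),[\Sigma]\rangle=2-2g\neq 0$ to rule out cancellation among spin$^c$ structures differing on $Y_0$, gives $\mathit{HE}(\cR,\mathfrak{t}')\neq 0$. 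This is where $g\ge 2$ enters.
\item Since $-1$-blowups are admissible cobordisms, the same nonvanishing holds for $\cR\#_\infty\overline{\bbC P^2}$.
\end{itemize}
For (ii), two $+1$-blowups per stage undo the positive clasps of each $Tk$ while keeping the $0$-framing. So $\cR$ is exhausted by nested $B^4\#_{2k}\bbC P^2$'s, and $\cR\#_\infty\bbC P^2\cong\R^4\#_\infty\bbC P^2$. Hence $\cR$ is $\bbC P^2$-stably standard but not $\overline{\bbC P^2}$-stably standard, and $\overline{\cR}$ is the reverse. $\cR\natural\overline{\cR}$ is neither, since blowups can be isotoped to one side of the end-summing $\R^3$. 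This separates the three manifolds and shows each is nonstandard. Without some such ingredient, your outline establishes only that $\cR$ is a Stein manifold homeomorphic to $\R^4$ that embeds in $\R^4$.
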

The genus function cannot directly show $\cR$ is exotic. Instead, we give a short proof of exoticness with end Floer homology \cite{elihomlidman, gadgil} combined with the $\bbC P^2$-stable diffeomorphism technique of Bi\v zaca-Gompf \cite{bizacagompf}. Since exoticness of the branched cover $\cR$ implies exoticness of Teng's plane, we have reproven Teng's Theorem A. 
Thus, end Floer homology is a relevant tool to study Question 6.13 of Gompf, asking for direct invariants to detect exotic planes \cite{gompfproper}. We discuss the problem of detecting infinite families based on $\cR$ in Section~\ref{sec:exotic}; changing the $-1$-framing in Figure \ref{fig:exoticR4} appears likely to produce a family with isomorphic end Floer homology, hence more subtle invariants may be needed. 
Turning our attention to the infinite periodic piece of $\cR$, which we call $(TH, \partial_-TH)$ throughout, we find:  

\begin{theorem}\label{thm:TH}
    The manifold $(TH, \partial_-TH)$ is a Stein exotic open 2-handle. Its attaching curve has smooth slice genus 2 in $TH$, and $TH$ has a topologically flat core disk inside its first four stages.
\end{theorem}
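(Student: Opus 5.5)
The theorem has four claims: Stein, topologically standard, exotic, and the genus and core-disk statements. I would handle them in that order.

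\textbf{Stein.} Every 2-handle in the periodic diagram of $TH$ is $0$-framed, so I would draw $TH$ in Legendrian position. The self-plumbings (kinks) are realized by clasps, and the modification by the triangular family of curves adds only further clasps, so each stage can be drawn with Thurston--Bennequin number at least $1$. Adding kinks of the appropriate sign lowers $tb$ as needed to reach framing $tb-1$ at every stage. Eliashberg's criterion, in Gompf's form for infinite handlebodies with a handle-by-handle Stein structure (as in \cite{gompfsteinisotopy}), then gives a Stein structure on $TH$. Throughout, the sign conventions of the kinks (the $CH_+$-type choice) should be chosen to make this work.

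\textbf{Topologically standard.} I would check that $(TH,\partial_-TH)$ satisfies Casson's criteria, so that it is a generalized Casson handle. This amounts to the following checks.
\begin{itemize}
\item Each stage kills the previous stage's $\pi_1$ generators, namely the double-point loops together with the triangular curves.
\item The end is simply connected, or has $\pi_1 = \bbZ$ at infinity as required.
\item $TH$ has the proper homotopy type of $D^2\times\bbR^2$ rel attaching region.
\end{itemize}
Theorem~\ref{thm:open2handles} then gives a homeomorphism to the open 2-handle and a topologically flat core disk.

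For the finer claim that this disk lies in the first four stages, I would use the structure of the triangular family. Stages $2$ through $4$ contain a subtower which is itself a generalized Casson handle: its curves attach along the genus-2 surface formed by the first stage's core together with the surgered clasps. Applying Theorem~\ref{thm:open2handles} to this subtower inside the open neighborhood of those stages, and then capping off, should give a flat disk.

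\textbf{Slice genus 2 and exoticness.} For the upper bound, tubing the first-stage immersed disk along its double points, using the triangular curves, should produce an embedded genus-2 surface bounded by the attaching curve inside the first stages. For the lower bound, I would cap $TH$ off to a Stein manifold by attaching a $0$-framed 2-handle along a Legendrian dual circle, or embed $TH$ into a closed Stein surface, or take a Stein filling of a compact sublevel set. I would then apply the adjunction inequality for Stein manifolds \cite{gompfgenera}: a surface $\Sigma$ of genus $g$ in the class generated by the core satisfies
\[
2g-2 \ge \Sigma\cdot\Sigma + \lvert\langle c_1,\Sigma\rangle\rvert = \lvert\text{rot}\rvert + \cdots .
\]
The Legendrian invariants $tb$ and $\text{rot}$ of the attaching curve then force $g\ge 2$. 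In the standard open 2-handle the attaching curve bounds a smooth disk, so $TH$ is not diffeomorphic to it rel boundary. Together with the homeomorphism above, this shows $TH$ is an exotic open 2-handle.

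\textbf{Main obstacle.} I expect the hardest step to be the lower bound on genus. The attaching curve is $0$-framed with rotation number likely $0$, so the naive adjunction inequality only gives $g\ge 1$. To get $g\ge 2$ I would either choose the Legendrian realization so that the capped-off surface has large $\lvert\text{rot}\rvert$, or use the relative adjunction inequality $tb+\lvert\text{rot}\rvert\le 2g-1$ for the attaching knot inside the Stein manifold $TH$. This requires exhibiting a Legendrian representative with $tb=-1$ and $\lvert\text{rot}\rvert=2$, or equivalently $tb=1$ and $\lvert\text{rot}\rvert=2$, produced by the triangular clasps. Verifying that the first-stage picture achieves this is the crux.
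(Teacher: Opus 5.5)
\textbf{The four-stage core disk is a genuine gap.} Stages $2$--$4$ of $TH$ form a compact height-3 tower, and that is not a generalized Casson handle. Definition~\ref{def:gch} requires infinitely many stages, and the proof of Theorem~\ref{thm:open2handles} runs through the end: it builds a proper $h$-cobordism with $\pi_1=\bbZ$ at infinity. A finite tower also has nontrivial $\pi_1$, generated by its top tip circle, so it is not a 2-handle. Applying Theorem~\ref{thm:open2handles} instead to the infinite sub-handle on stages $\ge 2$ does give a flat disk. By compactness that disk lies in finitely many stages, but the homeomorphism gives no control on how many. Finding a flat disk inside a \emph{fixed} finite tower is a disk-embedding-theory statement and needs a different tool.

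\textbf{How the paper gets the four-stage disk.} It uses the Cha--Powell criterion, Theorem~\ref{thm:chapowelldisk}. One first builds a properly immersed capped grope $G_2^c\subset T_{1-3}$ (Lemma~\ref{lem:grope}):
\begin{itemize}
\item the first stage is the genus-2 surface $\Sigma$ obtained by resolving the two kinks;
\item annuli extend a symplectic basis of $\Sigma$, and two of them are tubed across the triangular 2-handle, so all four end on parallel copies of the tip circle;
\item these copies bound genus-2 surfaces in the second $Tk$;
\item the caps are third-stage immersed core disks, with two extra negative kinks added to fix the four-twist framing discrepancy.
\end{itemize}
Lemma~\ref{lem:pi1} gives $\pi_1(T\setminus\Sigma)\cong\langle\mu,\mu_i\mid[\mu,\mu_i]\rangle$. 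The fourth stage kills the tip circles of $T_{1-3}$, so the image of $\pi_1(\nu G_2^c\setminus\Sigma)\to\pi_1(T_{1-4}\setminus\Sigma)$ is the $\bbZ$ generated by $\mu$. This has subexponential growth, so Theorem~\ref{thm:chapowelldisk} gives a flat disk in $T_{1-4}$ with the correct framed boundary.

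\textbf{The other three parts.} Your Stein, homeomorphism and genus arguments follow the paper (Theorem~\ref{thm:stein}, Proposition~\ref{prop:THhomeo} with Theorem~\ref{thm:open2handles}, Proposition~\ref{prop:notaplumbedhandle}), with some corrections.
\begin{itemize}
\item \emph{Homeomorphism.} The conditions to check are those of Definition~\ref{def:gch}: $Tk$ with a $0$-framed 2-handle on its tip circle is $D^2\times D^2$ rel attaching region, and the attaching circle is nullhomotopic in $Tk$. The proper homotopy type and $\pi_1$ at infinity are consequences (Theorem~\ref{thm:abstractcasson}), not hypotheses.
\item \emph{Stein.} $tb$ is lowered by Legendrian stabilization. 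The kinks of $TH$ are fixed (two positive ones per $Tk$), so there are no kinks to add or re-sign.
\item \emph{Genus lower bound.} There are no closed Stein surfaces. The curve $C\subset\partial_-TH$ is not on a convex boundary of $TH$, so the relative inequality does not apply there. What works is the absolute adjunction inequality in $S=B^4\cup TH$, glued along a $0$-framed unknot. With framing $tb-1=0$ it reads $tb+|r|\le 2g-1$. So you need $tb=1$, which the framing forces ($tb=-1$ is not equivalent), together with $|r|=2$.
\item \emph{Your stated crux.} It is settled by the paper's diagram. The attaching Legendrian of $Tk$ has $tb=3$ and $r=0$. The two stabilizations forced by the $0$-framing, taken with the same sign, give $|r|=2$, hence $g\ge 2$.
\end{itemize}
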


We determine the slice genus using a Stein adjunction inequality. Consequently, $TH$ is not diffeomorphic to any member of the family of exotic open 2-handles considered by Gompf in \cite[Question 6.12]{gompfproper}, which have smooth core disks. It is possible that $TH$ is diffeomorphic to a Casson handle, and we discuss in Section~\ref{sec:stein}. 

To obtain the four-stage topologically flat core disk, we adapt a technique of Cha and Powell \cite{chapowell} to a specialized family of generalized Casson handles, which includes $TH$. Following \cite{chapowell}, we obtain a family of topologically slice links. Cha and Powell define the \textit{ramified Whitehead doubling} operations: first take 0-framed pushoffs of a knot and then (untwisted) Whitehead double all components, with various signed clasps. We define the \textit{banded ramified Whitehead doubling} satellite operators by adding bands to a ramified Whitehead double pattern: if there are $n$ total parallel pushoffs coming from the ramification, we may add any choice of up to $n-1$ bands between different components in the pattern. Examples are shown in Figure~\ref{fig:link}.
 \begin{figure}[ht!]
    \centering
        \begin{subfigure}{0.27\textwidth}
            \begin{overpic}[width=\textwidth, 
             unit=1mm, tics=5]{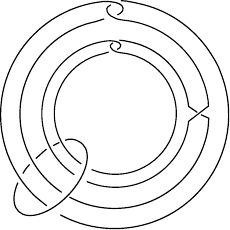}
            \end{overpic}
\end{subfigure}\hspace{15mm}
        \begin{subfigure}{0.4\textwidth}
            \begin{overpic}[width=\textwidth, 
             unit=1mm, tics=5]{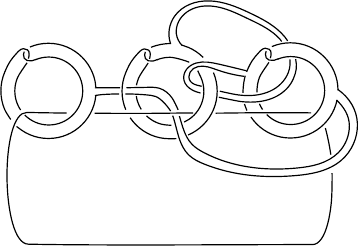}
            \end{overpic}
        \end{subfigure}
\caption{Two ways to banded-ramified-Whitehead double one component of a Hopf link.}
\label{fig:link}
\end{figure}
\begin{theorem}\label{thm:links}Any link obtained by performing $n$ rounds of 
    banded ramified Whitehead doubling operations
    to one component of a Hopf link, for $n\ge 4$, is topologically slice.
\end{theorem}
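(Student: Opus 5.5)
We follow the Cha--Powell strategy \cite{chapowell}, which deduces topological sliceness of iterated ramified Whitehead doubles of a Hopf link component from the existence of a topologically flat core disk inside the first few stages of a Casson handle. Let $L=K_0\cup K_1$ be the result of performing $n\ge 4$ rounds of banded ramified Whitehead doubling on the component $K_1$ of the Hopf link $H=K_0\cup\mu$. The first step is to realize the exterior of $K_0$ together with the doubling pattern as a Kirby diagram. Performing $0$-surgery on (or putting a dot on) the unknotted component $K_0$ identifies $S^3\setminus \nu K_0$ with a solid torus. The iterated pattern then becomes the boundary data of a compact $n$-stage tower $T_n$ built from thickened immersed disks: each ramification into parallel pushoffs gives several self-plumbed $2$-handles at a stage, each Whitehead clasp gives a self-plumbing of the appropriate sign, and each band joining two pushoffs merges the corresponding disks at that stage. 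This is exactly the local model of the generalized Casson handles of Section~\ref{sec:handle}, with the bands playing the role of the extra (triangular) attaching curves. So $L$ is the attaching link of an $n$-stage truncation of some generalized Casson handle $V$, in the specialized family that contains $TH$. The boundary of $T_n$ minus the attaching region is the exterior of $L$ glued appropriately, and $K_1$ appears as a meridian of the top-stage curves.

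The second step is the key input: a topologically flat core disk already inside the first four stages. This is the analogue of the statement in Theorem~\ref{thm:TH} for $TH$, proved for the whole specialized family by adapting \cite{chapowell}. By Theorem~\ref{thm:open2handles}, $V$ is homeomorphic to an open $2$-handle, so its attaching circle bounds a flat disk. Moreover, the proof shows that any generalized Casson handle embeds, relative to the attaching region, in the first four stages of the given one (Freedman's reembedding theorem, adapted to banded stages). From this we get a flat disk $\Delta\subset T_4\subset T_n$ bounded by the attaching circle. The step I expect to be the main obstacle is checking that the reembedding and proper $h$-cobordism arguments survive the banding operation. That requires that the bands do not destroy the $\pi_1$-nullity conditions (Casson's criteria) at each stage. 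I would verify this first by checking that a band between two pushoffs at a stage only identifies generators of the fundamental group of that stage, so the kernel conditions needed for the next stage persist.

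Finally, we translate the flat disk into slice disks, as in \cite{chapowell}. Take the standard embedding of $T_n$ in $D^4$ (or $S^2\times D^2$ with $K_0$ as dual), obtained by reversing the Kirby calculus. In this embedding the components of $L$ are the belt/meridian data of the top stage together with the attaching circle. The flat core disk $\Delta$, together with the complement $D^4\setminus T_n$, then exhibits disjoint flat disks in $D^4$ bounded by $K_0$ and $K_1$. Concretely, $K_1$ bounds a disk in $D^4\setminus T_n$ formed from the cocores of the top stages, and the dual of $\Delta$ handles $K_0$. We conclude that $L$ is topologically slice for all $n\ge 4$, with the case $n>4$ following since further stages only add more room.
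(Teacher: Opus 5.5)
\textbf{There is a genuine gap: nothing you cite puts the flat core disk inside the first four stages.} Your outer framework matches the paper's. You realize the link as the attaching circle together with the band-summed top-stage dotted circles of a height-$n$ banded Casson tower. You embed $T_{1-n}$ in $B^4$ as the complement of the ribbon disks those dotted circles represent. You then use a flat core disk lying in $T_{1-4}\subset T_{1-n}$. That last step is the one that fails.

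You gesture at adapting \cite{chapowell}, but the argument you actually give is Theorem~\ref{thm:open2handles} plus a reembedding claim, and neither does the job:
\begin{itemize}
\item Theorem~\ref{thm:open2handles} only gives a flat disk somewhere in the infinite handle $V$. By compactness that disk lies in some finite truncation $T_{1-N}$, but $N$ is uncontrolled and depends on how you extend $T_{1-n}$ to an infinite handle, so you learn nothing at $n=4$.
\item The proof of Theorem~\ref{thm:open2handles} is a proper $h$-cobordism argument. It contains no reembedding statement, so ``the proof shows'' is unfounded.
\item No reembedding theorem puts a Casson handle inside four stages, even for ordinary Casson towers. The reembedding theorems of Freedman and Gompf--Singh need more stages, and height four is precisely what Cha--Powell obtained by a different mechanism.
\item The obstacle you single out is not one. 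Banded Casson handles are triangular Casson handles and satisfy Definition~\ref{def:gch} directly.
\end{itemize}

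\textbf{What is missing is the argument of Theorem~\ref{thm:chapowell}, via Theorem~\ref{thm:chapowelldisk}.} It has three steps.
\begin{itemize}
\item Build a properly immersed height-2 capped grope $G_2^c\subset T_{1-3}$ with the correct framed boundary, as in Lemma~\ref{lem:grope}. The first-stage surface is a punctured disk with a tube for each double point. Annuli from its symplectic basis are tubed across the band 2-handles so their ends are parallel tip circles. Second-stage surfaces come from the next stage. Caps come from third-stage immersed core disks, with extra local double points added to correct the framing.
\item Compute $\pi_1$ of the complement of the first-stage surface $\Sigma$, as in Lemma~\ref{lem:pi1}.
\item Observe that $\pi_1(T_{1-3})$ is generated by tip circles, which die in $T_{1-4}$. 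Hence the image of $\pi_1(\nu G_2^c\setminus\Sigma)$ in $\pi_1(T_{1-4}\setminus\Sigma)$ is the $\bbZ$ generated by a meridian of $\Sigma$. This has subexponential growth, so Theorem~\ref{thm:chapowelldisk} gives the disk.
\end{itemize}

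\textbf{Your final step also needs correcting.} The doubled part of the link generally has several components, not a single $K_1$. These components bound the disjoint ribbon disks represented by the band-summed top-stage dotted circles, whose neighborhoods make up $B^4\setminus T_{1-n}$. The undoubled component is the attaching circle, and it bounds $\Delta$ itself, not cocores or a ``dual'' of $\Delta$.
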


To our knowledge, previous methods and results are not able to slice every member of this family, owing to the general choice of bands added at each stage.

\subsection{Organization}
In Section~\ref{sec:embeddings} we define generalized Casson handles, discuss relevant properties, and prove Theorem~\ref{thm:open2handles} and Corollary~\ref{cor:skyscrapers}. 
In Section~\ref{sec:handle} we give examples of generalized Casson handles, including $(TH,\partial_-TH)$ and $(BH_m, \partial_- BH_m)$, and define the families of \textit{triangular} and \textit{banded} Casson handles. We also construct the manifolds of Theorem~\ref{thm:mobius}.  In Section~\ref{sec:stein} we Stein realize $\cR$ and the branched covers of Theorem~\ref{thm:mobius}. Using the adjunction inequality, we compute the slice genus of the attaching circle of $TH$ and prove Theorem~\ref{thm:mobius}.
In Section~\ref{sec:exotic} we give Heegaard Floer background, a modification of Gadgil's symplectic nonvanishing theorem for end Floer homology, and prove Theorem~\ref{theorem:exoticR4}. In Section~\ref{sec:disk} we show how to obtain a four-stage core disk in any banded Casson handle, then we prove Theorems~\ref{thm:TH} and \ref{thm:links}.

\subsection{Acknowledgements}  I wish to especially thank my advisor John Etnyre, for many helpful discussions and for his continuous guidance and support, which have been invaluable. I thank Bob Gompf for helpful discussions including background on triangular families, and for insight into end-sums. I thank Yikai Teng for helpful discussions, and for pointing out concerns with an earlier version of Theorem~\ref{thm:open2handles}. I also thank Jen Hom, Tye Lidman, Arunima Ray, Mark Powell, Shelly Harvey, Ryan Dickmann, Gheehyun Nahm, Jacob Guynee, Alex Nolte, and Jay Patwardhan for helpful discussions. The author was partially supported by NSF grant DMS-2203312.
\section{Generalized Casson handles}\label{sec:embeddings}

We begin by defining \textit{Generalized Casson Handles}; see \cite{chapowell, elihomlidman, gompfgenera} for detailed background on Casson handles. These handles include Casson handles, $TH$ and triangular Casson handles (defined in Section \ref{sec:handle}), all infinite towers in the sense of disk embedding theory \cite{diskembedding}, and the class of generalized Casson handles considered by Gompf \cite{gompfsteinisotopy} obtained by stacking self-plumbed disk bundles over surfaces. The freedom allowed in this construction will allow us to construct generalized Casson handles with several good properties, in Section~\ref{sec:handle}.

Recall that Freedman proved the topological Poincar\'e conjecture by showing that Casson handles are homeomorphic to open 2-handles \cite{freedman}. Previously,
it was known that Casson handles had the correct \textit{proper homotopy type}. Moreover, Casson's and Siebenmann's work \cite{cassonthree} showing that Casson handles are proper homotopy equivalent to open 2-handles only relies on certain abstract properties. We use these abstract properties to define the class of {Generalized Casson handles}. Heuristically, generalized Casson handles are obtained by stacking certain pieces which become standard 2-handles, after 2-handles are attached to special loops. Also, each piece is required to give a nullhomotopy of its own attaching region.

\begin{definition}[Generalized Casson Handles]\label{def:gch}
    Suppose for each $k\ge 1$ we have a compact cobordism $h_k = (H_k; \partial_- H_k,\emptyset)$ which is a disjoint union of 4-dimensional 2-handles, and a compact cobordism $n_k = (N_k; \partial_- N_k, \partial_+ N_k)$ with canonical identifications $\partial_- N_k= \partial_- H_k$ and $\partial_+ N_k = \partial_- H_{k+1}$ (hence also $\partial_- N_{k+1} = \partial_+ N_k$) such that the following hold:
\begin{enumerate}
    \item For each $k$ there exists an isomorphism of cobordisms $\theta_k : n_kh_{k+1} \xrightarrow[]{\cong} h_k$, that is, when $N_k$ and $H_{k+1}$ are glued using the canonical identification $\partial_+ N_k = \partial_- H_{k+1}$, the resulting pair $(N_k \cup H_{k+1},\partial_-N_k)$ is diffeomorphic rel boundary to $(H_k, \partial_- H_k)$ via $\theta_k$.
    \item For each $k$, $\partial_-N_k\hookrightarrow N_k$ is componentwise nullhomotopic.
    \item $\partial_- N_1$ is connected (and thus $H_1 \cong D^2\times D^2$.)
\end{enumerate}

See Figure~\ref{fig:GCH}. Write $p_k = (P_k;\partial_- P_k, \partial_+ P_k)$ for the composite cobordism $n_1n_2\hdots n_k$ made from the canonical identifications.
Given $n_k, h_k, p_k$ as above, if $V$ is the interior of the direct limit $\bigcup_{k=1}^\infty P_k$, union the interior of the lower boundary $\partial_- P_1 = \partial_-N_1 = \partial_- H_1$, we will call $(V, \partial V)$ a \textit{Generalized Casson Handle.}
\end{definition}
\begin{figure}[ht!]
\centering
            \begin{overpic}[width=0.5\textwidth, 
 unit=1mm, tics=5]{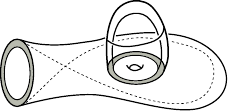}
         \put(70, 43){$H_{k+1}$}
         \put(72, 13){$\partial_+ N_{k}$}
         \put(5, 33){$\partial_- N_{k}$}
         \put(97, 26){$ N_{k}$}
            \end{overpic}
            \caption{Schematic for the $N_k$ and $H_{k+1}$ of a generalized Casson handle.}
\label{fig:GCH}
\end{figure}

 Many examples are given in Section~\ref{sec:handle}. Note that the canonical identifications provide framings for $\partial_-N_k$ and $\partial_+ N_k$, and in general, the $N_k$ will have more boundary than $\partial_- N_k\cup\partial_+ N_k$. In order to prove Theorem~\ref{thm:open2handles}, we state several consequences of the abstract properties above, all of which were proven in Appendix~A to Casson's Lecture~1 \cite{cassonthree}. 

\begin{theorem}[Casson, Lecture 1, Appendix A, Theorem 1 \cite{cassonthree}]\label{thm:abstractcasson}
     Let $(V,\partial V)$ be a generalized Casson handle.
     \begin{enumerate}
         \item There is a canonical embedding $\psi: (V,\partial V) \hookrightarrow (H_1,\partial_-H_1)$ into the standard 2-handle, extending the identity $\partial_- N_1 = \partial_- H_1$. {The embedding has open image. }
         \item  $(V,\partial V)$ is proper homotopy equivalent to $(D^2\times \R^2, S^1\times \R^2)$.
         \item \label{it:inclusion} Writing $S^1 = J\cup J'$, the union of two closed intervals intersecting at their endpoints, the inclusion of $J\times \R^2\subset S^1\times \R^2 = \partial V$ into $V$ is a proper homotopy equivalence.
     \end{enumerate}
\end{theorem}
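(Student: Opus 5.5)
Item (1) comes from composing the identifications $\theta_k$, item (2) from showing that both sides deformation retract properly onto the attaching region, and item (3) from sliding the attaching annulus off onto an arc $J$ using condition 2 of Definition~\ref{def:gch}.

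\textbf{Item (1).} Define $\psi_k : P_k\cup H_{k+1}\to H_1$ inductively by
\[
\psi_1=\theta_1, \qquad \psi_{k+1}=\psi_k\circ(\mathrm{id}_{P_k}\cup\theta_{k+1}).
\]
This makes sense because $P_{k+1}\cup H_{k+2} = P_k\cup N_{k+1}\cup H_{k+2}$, and $\theta_{k+1}$ identifies $N_{k+1}\cup H_{k+2}$ with $H_{k+1}$ rel $\partial_-$.

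On $P_k$ we have $\psi_{k+1}=\psi_k$. So the restrictions to $P_k$ are compatible, and they assemble to an embedding $\psi:\bigcup P_k\to H_1$ that extends the identity on $\partial_-N_1$.

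\emph{Openness of the image.} Every point of $V$ lies in the interior of some $P_k$ or in the interior of the attaching region. There $\psi$ agrees with the embedding $\psi_k$ of the codimension-0 manifold $P_k\cup H_{k+1}$. Hence $\psi(V)$ is open in $H_1$ relative to $\partial_-H_1$, by invariance of domain.

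\textbf{Item (2).} I would follow Casson's argument and show that $V$ proper deformation retracts, up to proper homotopy, onto a collar of $\partial V$ with a core attached. The same then holds for the open 2-handle, and the two are compared.

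1. Attach the handles $H_{k+1}$ one stage at a time; by the isomorphisms $\theta_k$ each $P_k\cup H_{k+1}$ is a 2-handle. Condition 2 of Definition~\ref{def:gch} says $\partial_-N_k\hookrightarrow N_k$ is componentwise nullhomotopic. Use these nullhomotopies to build maps $P_{k+1}\to P_k\cup(\text{cone})$ that compress $N_{k+1}$ onto a homotopy-theoretic disk bounded by $\partial_-N_{k+1}$ inside $N_k$.

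2. Compose these maps to get a map $V\to D^2\times\R^2$. It is proper because stage $k$ is sent to the region at radius $\sim k$.

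3. Build an inverse by sending the core disk to a ``telescope'' of the nullhomotopies. The homotopies between compositions are assembled stage by stage and are supported in $N_k$ at stage $k$; this is what makes them proper.

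\textbf{Item (3).} Apply the same argument after noting that condition 2 lets the annulus $\partial_-N_1$ be slid off itself onto $J\times\R^2$. Concretely, $V$ deformation retracts properly onto $J\times\R^2$ union a collar, and the complementary interval $J'$ is nullhomotopic through the first nullhomotopy.

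\textbf{Main obstacle.} The hardest part is properness in (2): assembling infinitely many nullhomotopies into a single proper homotopy equivalence. The plan is to use a Milnor telescope / $\lim^1$-type argument. Each nullhomotopy of $\partial_-N_k$ is chosen inside $N_k$ (hence away from $P_{k-1}$), so preimages of compacta meet only finitely many stages.
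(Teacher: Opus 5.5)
Your item (1) is correct and matches the construction the paper attributes to Casson: $\psi_{k+1}=\psi_k\circ(\mathrm{id}_{P_k}\cup\theta_{k+1})$ is the composite of the extensions $\overline{\theta_k}$, the restrictions to $P_k$ stabilize, and the image is open because each $\psi_k$ is a diffeomorphism onto $H_1$. Items (2) and (3), however, have a genuine gap.

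\emph{The target in (2) is false as written.} A closed collar of $\partial V$ with a core disk attached has end neighborhoods $S^1\times(\R^2\setminus rB^2)\times[0,1]\simeq T^2$, so its fundamental group at infinity is $\mathbb{Z}^2$. By contrast, $D^2\times\R^2$ has $\pi_1^\infty\cong\mathbb{Z}$. Since $\pi_1^\infty$ is a proper homotopy invariant, neither $V$ nor the model properly deformation retracts onto such a subspace. The correct retract is $J\times\R^2$, so (3) carries the content, and (2) follows from it by comparison with the model.

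\emph{Your properness argument only sees the direction $k\to\infty$.} Each stage $N_k\cap V$ is itself noncompact. The part of $\partial N_k$ other than $\partial_\pm N_k$ lies, under $\psi$, in $\partial_+H_1$, and it is deleted. The end of $V$ corresponds to $Y=\bigcap_k H_k\cup\partial_+H_1$, with neighborhoods $(H_i\cup\partial_+^i)\setminus Y$ that contain a collar of the deleted boundary of \emph{every} stage. So sending ``stage $k$ to radius $\sim k$'' does not give a proper map: points of $N_1$ near its deleted boundary must go to infinity. Homotopies supported stage by stage in $N_k$ are not proper either, and a telescope or $\lim^1$ argument does not address this.

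\emph{Condition 2 is misused.} $\partial_-N_{k+1}=\partial_+N_k$ is nullhomotopic in $N_{k+1}$, not in $N_k$; for a Casson handle the cores of $\partial_+N_k$ are the double point loops, which generate $\pi_1(N_k)$. Moreover, a map of $N_{k+1}$ to a cone extending the identity on $\partial_-N_{k+1}$ exists with no hypothesis at all, so your step 1 never actually uses condition 2. Likewise, in (3) the problem is not contracting the arc $J'$. It is producing, away from every compact set, homotopies of end neighborhoods into $J\times\R^2$.

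What is missing is the mechanism from Casson that the paper reproduces as Lemma~\ref{lem:homotopy}. The inclusion $H_{n+1}\cup\partial_+^{n+1}\hookrightarrow H_n\cup\partial_+^n$ is homotopic, rel $\partial_+^{n+1}$, to a map into the collar $\partial_+^n$. A 2-handle deformation retracts, rel a collar of its upper boundary, onto its cocore union that collar. So the claim reduces to the map $H_2(H_{n+1},\partial_+H_{n+1})\to H_2(H_n,\partial_+H_n)$ being zero. That in turn means the linking numbers in $\partial H_n\cong S^3$ between the cores of $\partial_+H_{n+1}$ and of $\partial_-H_n$ vanish, which is exactly where componentwise nullhomotopy of $\partial_-N_n$ in $N_n$ enters.

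These homotopies move an end neighborhood into the collar of the deleted boundary while staying off a prescribed compact set, but they are not proper. The Casson--Siebenmann criterion, Theorem~\ref{thm:cassonproper}, asks only for such ordinary homotopies off each compact set, fixing the subspace, and it upgrades them to a proper deformation retraction; this is also how the paper obtains properness in Step 2 of the proof of Theorem~\ref{thm:open2handles}. Without this homological use of condition 2 and this criterion, your outline of (2) and (3) does not go through.
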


We also need the following Lemma~\ref{lem:homotopy}, which was essentially proven in Appendix~A to Casson's Lecture~1~\cite{cassonthree}. For clarity, we will include this proof. First we recall some facts and notation, following \cite{cassonthree}. Suppose $(V,\partial V)$ is a generalized Casson handle, with $H_k, N_k,$ and $P_k$ as above. Casson's proof of the embedding $\psi$ above proceeds by defining $\overline{\theta_k}: n_1n_2...n_kh_{k+1}\xrightarrow{\cong} n_1...n_{k-1}h_k$ as an extension of $\theta_k : n_kh_{k+1}\to h_k$ by the identity. Then, letting $\psi_k = \overline{\theta_1}...\overline{\theta_k}: n_1...n_kh_{k+1} \xrightarrow{\cong} h_1$, and defining $\psi|_{n_1...n_k}$ to be $\psi_k|_{n_1...n_k}$, we obtain a well defined map $\psi$ on all of $V$. Figure~\ref{fig:chdiagram} illustrates the image of $\psi_3$ in the 2-handle $H_1$. The embedding $\psi$ allows us to assume $n_kh_{k+1}$ is exactly equal to $h_k$, and that $P_1\subset P_2\subset ... \subset H_1$, and that $H_1\supset H_2\supset ... $ inside $H_1$. Moreover, letting $\partial_+H_k =  \overline{\partial H_k\setminus \partial_-H_k}$, we have $\partial_+H_k\subset \partial_+H_1$ for all $k$. We emphasize that $\partial_+H_k$ and the $\partial_+ N_k$ of Definition~\ref{def:gch} differ, in that $\partial_+N_k$ is only a subset of $\overline{\partial N_k\setminus\partial_-N_k}$.

It follows that the compact set $Y = H_1\setminus V$ may be written as $X\cup \partial_+H_1$, where $X = \bigcap_{k\ge 1} H_k$. From this, Casson describes a system of neighborhoods of the end of $V$: let $\{\partial_+^i\}$ be a fundamental system of (closed) collared neighborhoods of $\partial_+H_1$ in $H_1$, such that $\text{int}(\partial_+^i)\supset \partial_+^{i+1}$ for all $i$, and such that $\partial_+^i \cap H_i$ is a collar of $\partial_+H_i$ for each $i$ (as in \cite[p. 217]{cassonthree}). Then a fundamental system of (closed) neighborhoods of $Y$ in $H_1$ is $\{H_i\cup \partial_+^i\}$ (hence, a system of closed (in $V$) neighborhoods of the end of $V$ is $\{(H_i\cup \partial_+^i)\setminus Y\}$). See Figure~\ref{fig:chdiagram}.

\begin{figure}[ht!]
\centering
            \begin{overpic}[width=0.5\textwidth, 
 unit=1mm, tics=5]{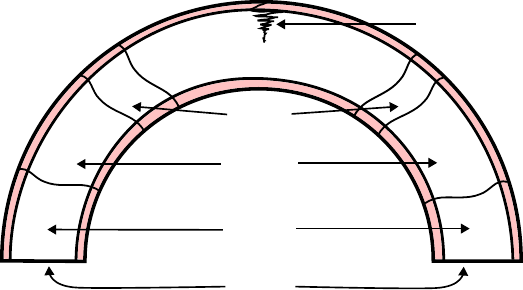}
 \put(80, 49){$X$}
         \put(62, 43){$H_4$}
         \put(47, 32){$N_3$}
         \put(47, 22){$N_2$}
         \put(47,10){$N_1$}
         \put(45, 0){$\partial_-H_1$}
            \end{overpic}
            \caption{The 2-handle $H_1$, with embedded $H_2$, $H_3$, and $H_4$ visible (e.g. $H_3 = H_4\cup N_3$), and a collar of $\partial_+H_1$ (shaded). Ignoring the set $X$, we see the image of $\psi_3$ in $H_1$.}
\label{fig:chdiagram}
\end{figure}
The proof of Lemma~\ref{lem:homotopy} below is the same as Casson's \cite[Proof of Step 3., Appendix A]{cassonthree}.
\begin{lemma}\label{lem:homotopy}
    For each $n\ge 1$ there is a (not necessarily proper) homotopy $g_t : H_{n+1}\cup \partial_+^{n+1}\to H_{n}\cup \partial_+^{n}$, where $g_0$ is the inclusion, $g_t$ fixes $\partial_+^{n+1}$ pointwise, and $g_1$ has image in $\partial_+^n$.
\end{lemma}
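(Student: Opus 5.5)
The plan is to use condition (2) of Definition~\ref{def:gch}, that $\partial_-N_n\hookrightarrow N_n$ is componentwise nullhomotopic, together with the structure $H_n = N_n\cup H_{n+1}$ and the fact that $H_{n+1}$ is a disjoint union of 2-handles. Each component of $H_{n+1}$ is a copy of $D^2\times D^2$ attached along $\partial_+N_n = \partial_-H_{n+1}$, a disjoint union of solid tori $S^1\times D^2$. First I would deformation retract $H_{n+1}$, relative to $\partial_+H_{n+1}$, onto $\partial_-H_{n+1}\cup \partial_+H_{n+1}$. To see this, note that $D^2\times D^2$ deformation retracts onto $\partial(D^2\times D^2) = (S^1\times D^2)\cup (D^2\times S^1)$ minus the interior of a small ball. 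This is not quite a retraction onto the boundary, so I would be careful here.

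\textbf{Corrected plan.} Rather than retracting, I would homotope the inclusion. The core $D^2\times 0$ of each component of $H_{n+1}$ has boundary in $\partial_-H_{n+1}\subset N_n$, and $H_{n+1}$ minus a collar of $\partial_+H_{n+1}$ deformation retracts onto $\partial_-H_{n+1}\cup(\text{core})$. The sets $\partial_+^{n+1}\cap H_{n+1}$ are collars of $\partial_+H_{n+1}$, so I would work with $H_{n+1}\cup\partial_+^{n+1}$ and construct $g_t$ in three stages:
\begin{enumerate}
\item Push $H_{n+1}\setminus \partial_+^{n+1}$ radially in the $D^2$-fibre direction toward $\partial_-H_{n+1}$, damping the motion to zero near $\partial_+^{n+1}$. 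Points of $H_{n+1}$ outside the collar end up either in $\partial_-H_{n+1}$ or in the collar $\partial_+^{n+1}$.
\item Use the nullhomotopy of $\partial_-N_n\hookrightarrow N_n$ and a collar structure on $N_n$ to contract the part that landed in $\partial_-H_{n+1} = \partial_+N_n$. For this I need $\partial_+N_n$, rather than $\partial_-N_n$, to be nullhomotopic in $H_n$. This holds since $\partial_+N_n$ bounds the cores of the handles $H_{n+1}\subset H_n$, which are contractible, so each component can be contracted within $H_{n+1}\subset H_n$. The more useful fact is that, via $\theta_n$, $H_n$ is itself a 2-handle $D^2\times D^2$. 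Its $\partial_+H_n = D^2\times S^1$ is a deformation retract of $H_n$ minus a neighborhood of $\partial_-H_n$.
\item So the cleaner route is this. Inside $H_n\cong D^2\times D^2$, the subset $H_{n+1}$ is disjoint from a neighborhood of $\partial_-H_n$, by compactness and since $N_n$ contains a collar of $\partial_-N_n$. The radial projection $D^2\times (D^2\setminus \text{small disk})\to D^2\times S^1$ then gives a homotopy, in $H_n$, pushing $H_{n+1}$ into $\partial_+H_n$, after which it moves into $\partial_+^n$. To fix $\partial_+^{n+1}$ pointwise, I interpolate with the identity via a cutoff function supported away from $\partial_+^{n+1}$. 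This uses that $\partial_+^{n+1}\subset \operatorname{int}\partial_+^n$ and that $\partial_+^{n+1}$ is already inside the target $\partial_+^n$.
\end{enumerate}

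The main obstacle is making the image of $g_1$ land entirely in $\partial_+^n$ while keeping $\partial_+^{n+1}$ fixed. The cutoff region between $\partial_+^{n+1}$ and the rest must be sent into $\partial_+^n$, and the homotopy must stay inside $H_n\cup\partial_+^n$. This is exactly where the componentwise nullhomotopy of condition (2) is needed, as in Casson's argument. Radially pushing in the $D^2$-fibre direction requires avoiding the core $D^2\times 0$ of $H_n$. Since $H_{n+1}$ may meet the core, I would first use condition (2), applied to the intermediate pieces, to homotope $H_{n+1}$ off $D^2\times 0$ before projecting outward. I expect this step to carry the real content.
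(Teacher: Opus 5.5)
\textbf{There is a genuine gap: the step that carries the lemma is deferred rather than proved.} Your final frame is workable: homotope $H_{n+1}$ off the core $D^2\times 0$ of $H_n$, then radially project $D^2\times(D^2\setminus\text{small disk})$ onto $\partial_+H_n$, with a cutoff near $\partial_+^{n+1}$. But the step you defer (``use condition (2), applied to the intermediate pieces, to homotope $H_{n+1}$ off $D^2\times 0$'') is the entire content of the lemma, and you give no mechanism for it. Three ideas are missing.

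\begin{enumerate}
\item \emph{Reduce to 2-dimensional pieces.} $H_{n+1}$ is 4-dimensional, so you cannot simply push it off a 2-disk. First deformation retract each 2-handle of $H_{n+1}$, rel a collar of $\partial_+H_{n+1}$, onto its cocore union that collar. The problem becomes homotoping each cocore $c$ rel $\partial c$ into (a collar of) $\partial_+H_n$.
\item \emph{Identify the single obstruction.} Each component of $H_n$ is a contractible $D^2\times D^2$. So the homotopy exists iff $\partial c$ is nullhomotopic in $\partial_+H_n\cong D^2\times S^1$. This holds iff the algebraic intersection number of $c$ with the core of $H_n$ is zero, equivalently iff the linking number in $\partial H_n\cong S^3$ of $\partial c$ with the attaching circle vanishes. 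This is the paper's reduction: it shows $\pi_2(H_{n+1},\partial_+H_{n+1})\to\pi_2(H_n,\partial_+H_n)$ is zero, checked via Hurewicz.
\item \emph{Use condition (2) to kill it.} Only condition (2) for the single piece $N_n$ is needed. The attaching circle of $H_n$ bounds a singular disk $\Delta\subset N_n=H_n\setminus\operatorname{int}H_{n+1}$, which misses every cocore of $H_{n+1}$. Since $\Delta$ and the core have the same boundary, they represent the same class in $H_2(H_n,\partial_-H_n)$. Hence $c\cdot\text{core}=c\cdot\Delta=0$.
\end{enumerate}

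\textbf{Several intermediate claims are wrong and should be discarded.}
\begin{itemize}
\item $\partial_+H_n$ is not a deformation retract of $H_n$ minus a neighborhood of $\partial_-H_n$: that set is contractible, while $D^2\times S^1\simeq S^1$. It is a deformation retract of $H_n$ minus a neighborhood of the core. So the disjointness of $H_{n+1}$ from $\partial_-H_n$ that you invoke in Stage 3 buys nothing.
\item The nullhomotopy of $\partial_+N_n$ in $H_n$ in Stage 2 is automatic, since $H_n$ is contractible. It gives no control over landing in $\partial_+^n$.
\item The radial push toward $\partial_-H_{n+1}$ in Stage 1 is undefined along the cocore of $H_{n+1}$.
\end{itemize}
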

\begin{proof}
    It suffices to find a homotopy of the inclusion $H_{n+1}\hookrightarrow H_n$, rel $H_{n+1}\cap\partial_+^{n+1}$, to a map into $H_n\cap\partial_+^n$. Then the paste of this homotopy with the identity on $\partial_+^{n+1}$ is the desired homotopy $g_t$. Note, any 2-handle deformation retracts, rel a collar of its upper boundary, to its cocore union the collar. Thus, since each $\partial_+^i$ restricts to a collar of $\partial_+H_i$, the existence of the homotopy above is equivalent to the inclusion map $\pi_2(H_{n+1},\partial_+H_{n+1})\to \pi_2(H_n,\partial_+H_n)$ being zero. By Hurewicz, this is the same as $H_2(H_{n+1},\partial_+H_{n+1})\to H_2(H_n,\partial_+H_n)$ being zero, which amounts to showing the cocore of each 2-handle of $H_{n+1}$ has intersection number zero with the core of each 2-handle of $H_n$. These numbers are the same as the linking numbers in $\partial H_n \cong S^3$ of the cores of the solid tori in $\partial_+H_{n+1}$ and $\partial_- H_n$, which are zero since $\partial_-H_n = \partial_- N_n$ are componentwise nullhomotopic in $N_n = H_n\setminus \mathrm{int}(H_{n+1})$ by Definition~\ref{def:gch}, Item 2.  
\end{proof}
\begin{remark}
    Lemma~\ref{lem:homotopy} still holds if each space is replaced with its intersection with $(\partial_+H_1)^c$ (ie by restricting $g_t$). By an abuse of notation, we assume this is done throughout the following.
\end{remark}
We now show that Theorem~\ref{thm:abstractcasson}, Lemma~\ref{lem:homotopy}, and a proper $h$-cobordism theorem combine to give Theorem~\ref{thm:open2handles}, which states all generalized Casson handles are topologically standard.

\begin{remark}
    Note that the logic is not circular, but iterative. In particular, Freedman and Quinn directly showed certain infinite towers are topologically standard \cite{freedman, freedmanquinn}. These were then used to prove the proper $h$-cobordism theorem, which in turn we use to prove generalized Casson handles are topologically standard. Thus, we are bootstrapping from the specific infinite towers used by Freedman and Quinn to the more general, generalized Casson handles.
\end{remark} 
In \cite{gompfsteinisotopy}, Gompf standardized another class of generalized Casson handles directly, similarly to Freedman's proofs that Casson handles are standard. It is clear that class (also Casson handles, Triangular Casson handles, etc.) is a subclass of ours but not known whether it is proper. 
As in the introduction, we remark that Theorem \ref{thm:open2handles} shows that all \textit{open infinite towers} of disk embedding theory \cite[Definition 12.16]{diskembedding} are topologically standard: thus, while certain growth conditions (\textit{boundary-shrinkable} and \textit{replicable} \cite{diskembedding}) are needed to force the endpoint compactification of an infinite tower to be homeomorphic to a closed 2-handle, these are not necessary if one just wants a topologically flat disk bounded by the attaching circle. 

The proper $h$-cobordism theorem of Freedman-Quinn, simplified to the case of one end with fundamental group $\bbZ$ at infinity, is as follows. Recall that the fundamental group at infinity of an end is said to be \textit{stable} if given a sequence of neighborhoods defining the end, their inclusions eventually induce isomorphisms of $\pi_1$.

\begin{theorem}[\cite{freedmanquinn}, Corollary 7.3B]\label{thm:properhcobordism}
    Let $(W;V,V')$ be a simply connected smooth proper $h$-cobordism of dimension 5, with one end. Suppose $W$ (and therefore $V$ and $V'$) have stable fundamental group $\bbZ$ at infinity, and if $C = \partial W - (\mathrm{int} \,V\cup \mathrm{int}\, V')$ is not empty, assume that $C$ is diffeomorphic to $(C\cap V) \times I$. Then $W$ is homeomorphic to $V\times I$ extending the product structure on $C$.
\end{theorem}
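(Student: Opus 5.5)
This is a cited result, Corollary 7.3B of \cite{freedmanquinn}, and the paper uses it as a black box. Here is how I would reconstruct it from the general machinery of Freedman--Quinn. The overall strategy is the classical $5$-dimensional $h$-cobordism argument, run in the proper category and relative to $C$. The two places where dimension four matters are handled as follows: Whitney disks are produced by Freedman's disk embedding theorem, which applies because the relevant fundamental groups, $1$ in the interior and $\bbZ$ at infinity, are good; and the algebraic obstructions vanish because $\bbZ$ has trivial Whitehead group and trivial reduced projective class group.

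\textbf{Step 1: proper handle structure.} Choose a proper Morse function on $W$ relative to $V \cup C$. Using the collar $C \cong (C\cap V)\times I$, arrange the induced handle decomposition of $W$ on $V$ to have no handles meeting $C$. Cut $W$ into compact cobordisms $W_k$ along $5$-dimensional neighborhoods of the end. Stability of $\pi_1 = \bbZ$ at infinity lets me choose these neighborhoods so that each frontier has $\pi_1 \cong \bbZ$, mapping isomorphically into the end.

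\textbf{Step 2: cancel $0$-, $1$-, $3$- and $4$-handles.} Trade these handles in each piece for $2$- and $3$-handles in the usual way, working over $\bbZ[\bbZ]$ near infinity and over $\bbZ$ in the compact core. The proper chain complex of $\widetilde W$ relative to $\widetilde V$ is then concentrated in degrees $2$ and $3$ and is properly acyclic, because $W$ is a proper $h$-cobordism.

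\textbf{Step 3: make the $3$-handle boundary matrix standard.} A proper basis change must turn the boundary map into the identity, so that algebraically each $3$-handle cancels exactly one $2$-handle. The obstruction to this is the proper Whitehead torsion. It lives in a group built from $\mathrm{Wh}(1)$, $\mathrm{Wh}(\bbZ)$ and $\tilde K_0(\bbZ[\bbZ])$ (Siebenmann's end obstruction), and all three vanish by Bass--Heller--Swan. So the proper $h$-cobordism is automatically a proper $s$-cobordism.

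\textbf{Step 4: realize the algebraic cancellation geometrically (main obstacle).} In a middle level $V_{1/2}$, the attaching spheres of the $3$-handles meet the belt spheres of the $2$-handles with algebraic intersection given by the identity matrix. The excess intersections are paired by immersed Whitney disks, and these must be improved to flat embedded disks with framed, disjoint interiors, infinitely many of them, properly. The plan is:
\begin{itemize}
\item Use the geometrically dual spheres supplied by the handle structure to obtain $\pi_1$-null immersed transverse spheres.
\item Apply the proper version of the disk embedding theorem with good fundamental group at infinity, building convergent towers inside each level neighborhood so that the resulting embedded disks form a locally finite family.
\end{itemize}
This is exactly where Freedman's theorem that skyscrapers are standard, for the good groups $1$ and $\bbZ$, is used. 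Doing the Whitney moves along these disks gives geometric cancellation, level by level. The result is a homeomorphism $W \cong V\times I$ that is the identity on $V$ and respects the product structure on $C$, since no handle ever met $C$.
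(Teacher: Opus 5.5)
The paper gives no proof of this statement: it is quoted from Freedman--Quinn and used as a black box, which is also how you treat it. Your reconstruction is a faithful outline of the standard Freedman--Quinn argument, so nothing further needs comparing: a proper handle structure rel $V\cup C$, vanishing proper torsion because $\mathrm{Wh}(1)$, $\mathrm{Wh}(\bbZ)$ and $\widetilde{K}_0(\bbZ[\bbZ])$ all vanish, and locally finite topological Whitney moves obtained from a proper/controlled disk embedding theorem for the good groups $1$ and $\bbZ$.
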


We state a proper homotopy equivalence criterion due to Casson-Siebenmann \cite{cassonthree}. 
Throughout we assume all spaces are locally finite Hausdorff simplicial complexes.
Recall a map of spaces is \textit{proper} if preimages of compact sets are compact, and a \textit{proper homotopy} is a homotopy which itself is a proper map. A \textit{proper homotopy equivalence} $f:X\to Y$ is a proper map, such that there exists a proper map $g: Y\to X$ with $g\circ f$ and $f\circ g$ homotopic to the respective identities via proper homotopies. Concatenation of proper homotopies yields proper homotopies, and proper homotopy equivalence is an equivalence relation. The following criterion for checking proper homotopy equivalence is proven in Casson's Lecture 1, Appendix B.

\begin{theorem}[Casson-Siebenmann, Appendix B, \cite{cassonthree}]\label{thm:cassonproper}
    Suppose $X\subset Y$ is a closed subspace and a strong deformation retract. Suppose for each compact $K\subset Y$ there is a compact $L\subset Y$ containing $K$, and a (not necessarily proper) homotopy $g_t:Y\setminus L\to Y\setminus K$ fixing $X$ pointwise, with $g_0$ the inclusion, and $g_1$ having image in $X$. Then $X$ is a proper deformation retract of $Y$.
\end{theorem}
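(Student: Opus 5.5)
We prove the criterion by building the retraction as a limit of homotopies on an exhaustion of $Y$. The hypothesis is used only to move points far out toward the end into $X$ while staying outside a given compact set. Control on where the track of the homotopy lies is what makes the final homotopy proper.

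\emph{Step 1: exhaustion.} Choose compact sets $K_0\subset K_1\subset K_2\subset\cdots$ with $Y=\bigcup_i \mathrm{int}\,K_i$. Apply the hypothesis repeatedly, passing to a subsequence and enlarging as needed. This gives, for each $i$, a compact $L_i \supset K_i$ and a homotopy $g^i_t: Y\setminus L_i\to Y\setminus K_i$ that fixes $X$, starts at the inclusion and ends in $X$. We arrange $K_{i+1}\supset L_i$, so the annuli $A_i=\overline{K_{i+1}\setminus K_i}$ are compact. Since $X$ is a strong deformation retract, fix a retraction homotopy $r_t:Y\to Y$ rel $X$ with $r_0=\mathrm{id}$ and $r_1(Y)\subset X$. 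This will be used on compact pieces only.

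\emph{Step 2: local homotopies.} Define $G_t$ in the time window $[1-2^{-i},1-2^{-i-1}]$. On the region beyond $K_{i+1}$, use $g^i_t$ (suitably reparametrized). This pushes everything outside $K_{i+1}$ into $X$ while keeping it outside $K_i$. On the compact region near $K_{i+1}$, first interpolate by a cutoff function. We want the homotopy to be stationary deep inside and to equal $g^i$ outside. Because $g^i$ and the stationary map need not agree, we glue them using homotopy extension for the pair ($Y$, closed subcomplex), valid since these are simplicial complexes, applied relative to $X$. Concretely, on $\overline{Y\setminus L_i}$ we have a homotopy into $X$. Near the frontier of $L_i$ we taper it using a collar, and we correct inside the compact set $K_{i+1}$ with $r_t$, whose image on a compact set is compact.

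\emph{Step 3: assembly and properness.} At each stage, points outside $K_{j+1}$ are moved only within $Y\setminus K_j$, and points already in $X$ stay fixed. Hence the infinite concatenation converges: each point is moved only finitely many times in any compact region. This gives a homotopy $G:Y\times I\to Y$ from $\mathrm{id}$ to a retraction onto $X$, fixing $X$. It is proper because the preimage of $K_j$ meets only $(K_{j+N}\times I)$ for some $N$: tracks starting outside $K_{j+N}$ never enter $K_j$.

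The main obstacle is Step 2, the gluing near the frontier of $L_i$. The hypothesis homotopy is defined only on $Y\setminus L_i$, and its tracks near that frontier may wander anywhere outside $K_i$, including back across earlier compact sets. To fix this, I would first shrink the domain of $g^i$ to the complement of a slightly larger compact set $L_i'$. I would then use $r_t$ on the compact band between $L_i$ and $L_i'$, noting that $r_t$ restricted to a compact set has compact image. Finally, I would enlarge $K_{i+1}$ to contain that image, keeping the bookkeeping of which compact sets each track avoids consistent.
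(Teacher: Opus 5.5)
There is a genuine gap, and it lies in the order of your concatenation, not at the frontier of $L_i$. In your first window $[0,\tfrac12]$ you apply $g^0$ to the whole unbounded region outside $K_1$. Since $g^0_1$ lands in $X$ and every later stage fixes $X$, each far-out point is moved exactly once, by $g^0$. The hypothesis only guarantees that this track avoids $K_0$. Nothing forces $g^0$ to keep far-out points far out, so your Step 3 claim that tracks starting outside $K_{j+N}$ never enter $K_j$ does not follow, and it is false in general. Take $Y=[0,\infty)\times[0,1]$, $X=[0,\infty)\times\{0\}$, $K_0=[0,a]\times[0,1]$. The homotopy $g^0_t(x,s)=\bigl((1-ts)x+ts(a+1),(1-t)s\bigr)$ on $x>a+1$ satisfies every hypothesis, yet it sends every $(x,1)$ to $(a+1,0)$, so your $G_1$ is not proper. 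Applying $r_t$ on $K_{i+1}$ at every stage has the same defect, since $r$ gives no control over the tracks of points near the outer edge of ever larger compacta. The frontier issue you single out is, by contrast, harmless. The formula $\hat g^i_t(y)=g^i_{\lambda(y)t}(y)$, with $\lambda=0$ on $K_{i+1}$ and $\lambda=1$ off $\mathrm{int}K_{i+2}$, tapers $g^i$ to the identity on $K_{i+1}$ without homotopy extension, and its tracks are initial segments of $g^i$-tracks.

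The missing idea is that a point in the $j$th shell may be moved only by $g^i$'s with $i\to\infty$ as $j\to\infty$. Since the $g^i$ do not agree, you cannot patch them in space; you must stack them in time with the \emph{outermost first}, accumulating at $t=0$. Arrange each $g^i$ to be defined on $Y\setminus \mathrm{int}K_{i+1}$ with tracks in $Y\setminus K_i$. Then run $\hat g^{2m}$ during $[2^{-m-1},2^{-m}]$, i.e.\ set $H_t=\hat g^{2m}_s\circ\hat g^{2m+2}_1\circ\hat g^{2m+4}_1\circ\cdots$. This is pointwise a finite composition, because $\hat g^{2k}$ is the identity on $K_{2k+1}$, and it is locally stationary near $t=0$, so it is continuous. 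Now take $y\in K_{2m+2}\setminus\mathrm{int}K_{2m}$ with $m\ge 1$. It is fixed by every $\hat g^{2k}$ with $k>m$. Under $\hat g^{2m}$ it stays in $Y\setminus \mathrm{int}K_{2m}$. There $\hat g^{2m-2}$ is the untapered $g^{2m-2}$, which sends it into $X$ along a track avoiding $K_{2m-2}$, and afterwards it is fixed. Hence its whole track avoids $K_{2m-2}$, which gives properness; skipping every other index is exactly what absorbs the taper regions. At the end only points of the compact set $K_2$ can fail to lie in $X$. A single final application of $r_t$, which is stationary on $X$ and moves only the compact image of $K_2$, then completes a proper deformation retraction.
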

\begin{proof}[Proof of Theorem \ref{thm:open2handles}]
     We construct a simply connected, single ended proper $h$-cobordism $W$ from
     $(V,\partial V)$
     to $(D^2\times \bbR^2, S^1\times \bbR^2)$ rel boundary, with stable fundamental group $\bbZ$ at infinity. The proof then follows by Theorem~\ref{thm:properhcobordism}. In Step 1 we construct the cobordism $W$. In Step 2 we use Theorem~\ref{thm:cassonproper} to show the inclusion of the standard 2-handle is a proper homotopy equivalence: to do this, we introduce a compact exhaustion of $W$, whose complements we can homotop into the standard 2-handle using Lemma~\ref{lem:homotopy}. In Step 3 we show the inclusion of the generalized Casson handle into $W$ is a proper homotopy equivalence, concluding the proof.

     \textbf{Step 1.} By Theorem \ref{thm:abstractcasson} there exists a smooth embedding $\psi: (V,\partial V)\hookrightarrow(D^2\times \bbR^2, S^1\times \bbR^2)$ extending the identity map on the boundary. 
     Call the open 2-handle $h$. Using the embedding, construct the following manifold $W$, in the same spirit as \cite[Corollary 1.2]{freedman}: 
     \[W = (h\times [0,1)) \cup (V\times \{1\}).\]
      See Figure~\ref{fig:phc}, left. This is a manifold since $\psi$ has open image. We will show $W$ is a proper $h$-cobordism. It is clear that the inclusions of $h\times  \{0\}$ and $V\times\{1\}$ are proper maps, inducing homeomorphisms of end spaces, all of which are a single point; moreover both boundary components and $W$ are contractible. Thus $W$ is a simply connected $h$-cobordism, and the subset of the boundary $\partial W \setminus ( \operatorname{int}((h\times\{0\})\cup (V\times \{1\}))$ is diffeomorphic to a product $(S^1\times \R^2)\times I$.

         \begin{figure}[ht]
\centering
\begin{subfigure}[b]{0.33\textwidth}
   \includegraphics[scale=0.8]{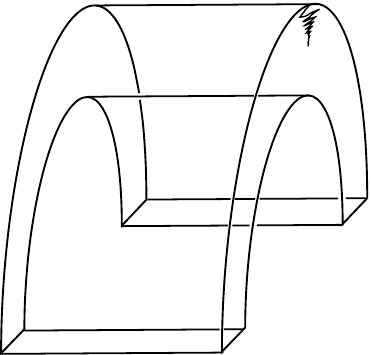} 
\end{subfigure}\begin{subfigure}[b]{0.33\textwidth}
    \includegraphics[scale=0.8]{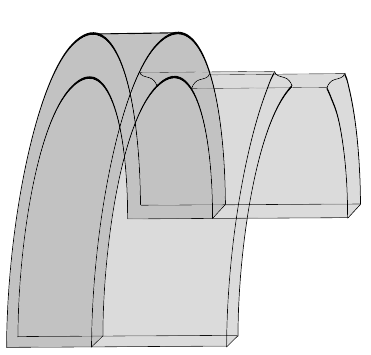} 
\end{subfigure}\begin{subfigure}[b]{0.33\textwidth}
   \includegraphics[scale=0.8]{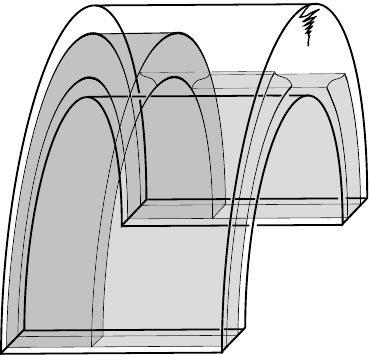} 
\end{subfigure}
\caption{(Left) Schematic of the proper $h$-cobordism $W$. The left face is $h\times \{0\}$; the right face is $V\times \{1\}$, which is obtained by deleting the set $X$ (the squiggle) from $h\times \{1\}$. (Middle) Compact exhaustant $\cK_n$ of Step 2 (with $\ve_n$ rescaled for clarity). (Right) $\cK_n$ inside $W$.}
\label{fig:phc}
\end{figure}

\textbf{Step 2.} We show the inclusion $i:h \times \{0\} \hookrightarrow W$ is a proper homotopy equivalence using Theorem~\ref{thm:cassonproper}. First define a compact exhaustion of $W$ as follows. Note, the interiors of the collars $\{\partial_+^i\}$ of Lemma~\ref{lem:homotopy} are the complements of compact exhaustants $\{K_n\}$ of the open 2-handle $h$ (which are of the form $D^2\times nD^2$). Also $V$ admits a compact exhaustion $\{V_n\}$, where $V_n$ is the closure of $V\setminus ((H_n \cup \partial_+^n) \setminus X)$ in $V$. Moreover, $V_n\subset K_n$ inside the 2-handle. Then choosing $0 < \ve_n \searrow 0$, the sets
\[\cK_n:= (K_n\times [0,1-\ve_n]) \cup (V_n\times [1-\ve_n,1]) \] form a compact exhaustion of $W$ with $\cK_n\subset \text{int}(\cK_{n+1})$. Note, for each $n$ we have
\[W\setminus \cK_n = \left(\text{int}(\partial_+^n) \times [0,1-\ve_n]\right) \cup \left( (h\setminus V_n)\times(1-\ve_n,1) \right) \cup \left((V\setminus V_n) \times\{1\}\right).\] Let $K\subset W$ be compact and choose $K\subset \text{int}(\cK_m)$. Then $K$ is disjoint from the closure of $W\setminus \cK_m$, which is 
\begin{align*}
\overline{W\setminus \cK_m} &= \left(\partial_+^m \times [0,1-\ve_m]\right) \cup \left(\overline{(h\setminus V_m)}\times[1-\ve_m,1)\right) \cup \left(\overline{(V\setminus V_m)} \times\{1\}\right)\\
&= \left(\partial_+^m \times [0,1-\ve_m]\right) \cup \left((H_m \cup \partial_+^m)\times[1-\ve_m,1)\right) \cup \left((H_m \cup \partial_+^m)\setminus X) \times\{1\}\right).\end{align*}
Moreover, since $K\subset \cK_m \subset \cK_{m+1}$, we have $W\setminus \cK_{m+1}\subset\overline{W\setminus \cK_{m+1}} \subset \overline {W\setminus \cK_m} \subset W\setminus K$. By Theorem~\ref{thm:cassonproper}, it suffices to show there is an ordinary homotopy $g_t : \overline{W\setminus \cK_{m+1}} \to \overline {W\setminus \cK_m}$ starting with the inclusion map, fixing $h\times \{0\}$ pointwise, and such that $g_1$ has image in $h\times\{0\}$.

Using a straight-line homotopy near $\{1\}$, $\overline{W\setminus \cK_{m+1}}$ deformation retracts inside $\overline{W\setminus \cK_{m}}$ to
\[ \left(\partial_+^{m+1} \times [0,1-\ve_{m+1}]\right) \cup \left((H_{m+1} \cup \partial_+^{m+1})\times[1-\ve_{m+1},x]\right) \] for some $1-\ve_{m+1} < x < 1.$ See Figure~\ref{fig:phc}. The two sets in the union above are closed, and intersect in the closed set $\partial_+^{m+1}\times \{1-\ve_{m+1}\}$. By Lemma~\ref{lem:homotopy} there exists a homotopy $g_t' : H_{m+1}\cup\partial_+^{m+1} \to H_{m}\cup\partial_+^{m}$, which fixes $\partial_+^{m+1}$ pointwise, to a map with image in $\partial_+^m$. By applying $g_t'$ fiberwise to the second set above, and extending by id to the first set, we obtain a homotopy $g_t'' : \overline{W\setminus \cK_{m+1}} \to \overline {W\setminus \cK_m}$ starting with the inclusion map, fixing $h\times \{0\}$ pointwise, such that $g_1''$ has image contained in
\[ \left(\partial_+^{m+1} \times [0,1-\ve_{m+1}]\right) \cup \left(\partial_+^m\times[1-\ve_{m+1},x]\right).\]
See Figure~\ref{fig:homotopy}. Thus, $g_1''$ has image contained in $\partial_+^m\times [0,x]$, and this set is disjoint from $K$, since we showed above \[\partial_+^m\times [0,x]\subset \overline {W\setminus \cK_m} \subset W\setminus K.\] Using a straight-line homotopy to push $\partial_+^m\times [0,x]$ into $h\times \{0\}$, we obtain the desired $g_t$.

\begin{figure}[h!]
        \centering 
         \begin{subfigure}[b]{0.46\textwidth}
            \begin{overpic}[width=\textwidth, 
 unit=1mm, tics=5]{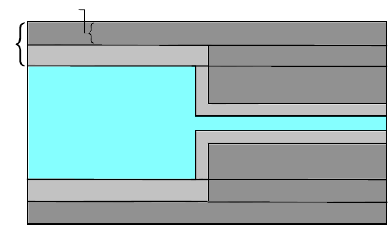}
 \put(25, 25){$\cK_m$} 
 \put(0, 55){$\partial_+^{m+1}\times I$} 
  \put(-13, 45){$\partial_+^{m}\times I$}
 \end{overpic}
        \end{subfigure} \hspace{5mm}\begin{subfigure}[t]{0.46\textwidth}
            \begin{overpic}[width=\textwidth, 
 unit=1mm, tics=5]{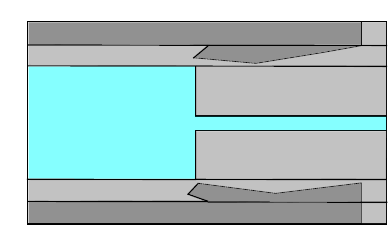}
 \end{overpic}
        \end{subfigure}
       \caption{(Left) Diagram of $W$ showing $\overline {W\setminus \cK_m}$ in dark union light grey, $\overline {W\setminus \cK_{m+1}}$ in dark grey, $h\times \{0\}$ as the left edge, and $V\times \{1\}$ as the right edge. Here the sets $\partial_+^n\times I$ are implicitly intersected with $V\times \{1\}$ (ie they have the set $X\times \{1\}$ deleted.) (Right) The homotopy $g_t''$ (constructed using Lemma~\ref{lem:homotopy}) squeezes the dark grey, through the light grey, into $\partial_+^m\times[0,x]$, for $x < 1$.}
        \label{fig:homotopy}
    \end{figure}

We now see $W$ has the proper homotopy type of the open 2-handle, and thus has stable fundamental group $\bbZ$ at infinity (since this is preserved under proper homotopy equivalence \cite[p. 490]{siebenmann}.) Note, the stable $\pi_1^\infty = \bbZ$ and single-endedness can also be seen using the system $\{W\setminus \cK_n\}$.

\textbf{Step 3.} We show the inclusion $j: V\times\{1\} \to W$ is a proper homotopy equivalence. By Theorem \ref{thm:abstractcasson}, Item \ref{it:inclusion}, if $J\subset S^1$ is a closed interval, the inclusion of $J\times \R^2 \subset S^1\times \R^2 = \partial V$ into $V$ is a proper homotopy equivalence. By construction, $W$ has a product structure restricted to $\partial W \setminus \text{int}(h\cup V) \cong (S^1\times \R^2)\times [0,1]$; fix an identification. Then the embedding $J\hookrightarrow S^1$ gives us an embedding \[(J\times \R^2)\times [0,1] \hookrightarrow (S^1\times \R^2)\times [0,1] \cong \partial W \setminus \text{int}(h\cup V) .\] Thus, we obtain a commutative diagram of proper maps, where all arrows are inclusions: 
\[\begin{tikzcd}
	{h\times\{0\}} && {W} && V\times \{1\}\\
	\\
	{(J\times \R^2)\times\{0\}} && {(J\times \R^2)\times[0,1]} && (J\times \R^2)\times \{1\}
	\arrow["{\simeq_p}", from=1-1, to=1-3]
	\arrow["{\simeq_p}", 
    from=3-1, to=1-1]
	\arrow["{\simeq_p}", from=3-1, to=3-3]
	\arrow[from=3-3, to=1-3]
    \arrow["j"',from=1-5, to=1-3]
    \arrow["{\simeq_p}"', from=3-5, to=3-3]
    \arrow["{\simeq_p}"', 
    from=3-5, to=1-5]
\end{tikzcd}\]
It follows that the middle vertical arrow, and hence $j$, are proper homotopy equivalences.
\end{proof}

\begin{proof}[Proof of Corollary \ref{cor:skyscrapers}]
    This follows since open infinite towers are generalized Casson handles.
\end{proof}

\section{Defining some 4-manifolds} \label{sec:handle}
In this section we define relevant 4-manifolds used throughout the paper, starting with ribbon disk complements $X_n$. We then define $(TH,\partial_- TH)$, the general class of Triangular Casson handles, and the smaller class of banded Casson handles. Next we give an infinite family $BH_m$ of generalized Casson handles that are hybrids of infinite towers and triangular Casson handles. We use the $BH_m$ to construct open 4-manifolds $\A_m$ and $\M_m$ that will serve as branched covers over irreducible surfaces in $\R^4$. 
\begin{figure}[h!]
        \centering 
         \begin{subfigure}[b]{0.2\textwidth}
            \begin{overpic}[width=\textwidth, 
 unit=1mm, tics=5]{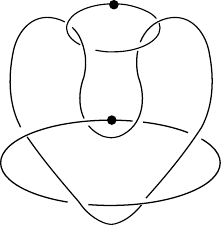}
 \put(93, 85){$n$} 
 \end{overpic}
        \end{subfigure} \hspace{10mm}\begin{subfigure}[t]{0.3\textwidth}
            \begin{overpic}[width=\textwidth, 
 unit=1mm, tics=5]{figures/diskcomplement8.pdf}
 \put(42, 35){$n-2$}
 \end{overpic}
        \end{subfigure}
       \caption{Ribbon disk complement $X_n$ and the ribbon knot $K_n$.}
        \label{fig:diskcomplement3}
    \end{figure}
\subsection{The ribbon disk complements $X_n$}

By generalizing the $-1$-framing in Figure~\ref{fig:exoticR4} we obtain the ribbon disk complement $X_n$ shown in Figure~\ref{fig:diskcomplement3}. 
    \begin{figure}[htb!]
        \centering 
 \begin{subfigure}[b]{0.23\textwidth}
            \begin{overpic}[width=\textwidth, 
 unit=1mm, tics=5]{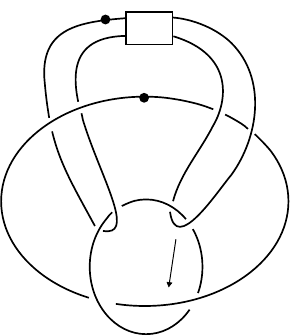}
 \put(63, 21){$n$} 
 \put(39, 89){$-1$}
 \end{overpic}
            \caption{}
        \end{subfigure} \hspace{5mm}
 \begin{subfigure}[b]{0.23\textwidth}
            \begin{overpic}[width=\textwidth, 
 unit=1mm, tics=5]{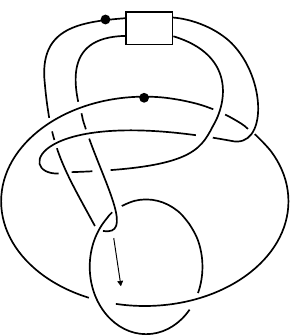}
 \put(63, 21){$n$} 
 \put(39, 89){$-1$}
 \end{overpic}
            \caption{}
        \end{subfigure} 
        \hspace{5mm}
 \begin{subfigure}[b]{0.25\textwidth}
            \begin{overpic}[width=\textwidth, 
 unit=1mm, tics=5]{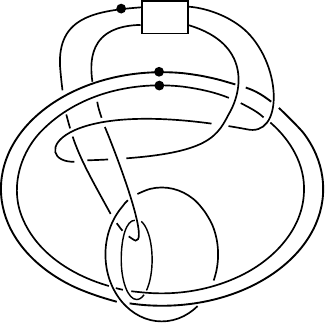}
 \put(70, 21){$n$} 
 \put(48, 21){0}
 \put(45, 91){$-1$}
 \end{overpic}
            \caption{}
\end{subfigure}\hspace{10mm}\begin{subfigure}{0.23\textwidth}
            \begin{overpic}[width=\textwidth, 
 unit=1mm, tics=5]{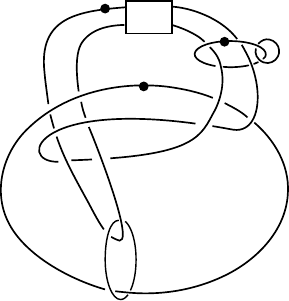}
 \put(46, 23){0} 
 \put(43, 92){$-1$}
  \put(92, 87){$n$}
 \end{overpic}
            \caption{}
            \label{fig:generaldiskcomplement}
        \end{subfigure}\hspace{10mm}
 \begin{subfigure}[b]{0.23\textwidth}
            \begin{overpic}[width=\textwidth, 
 unit=1mm, tics=5]{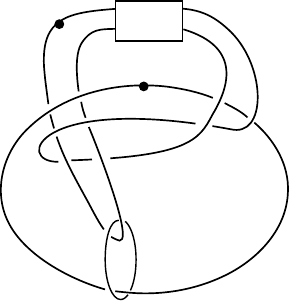}
 \put(46, 23){0} 
 \put(38.5, 91){$n-1$}
 \end{overpic}
            \caption{}
        \end{subfigure} \hspace{10mm}
 \begin{subfigure}[b]{0.3\textwidth}
            \begin{overpic}[width=\textwidth, 
 unit=1mm, tics=5]{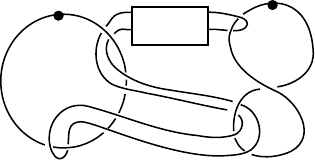}
             \put(54, 24){0} 
 \put(45, 40){$n-1$}
 \end{overpic}
            \caption{}
        \end{subfigure} \hspace{10mm}
 \begin{subfigure}[b]{0.32\textwidth}
            \begin{overpic}[width=\textwidth, 
 unit=1mm, tics=5]{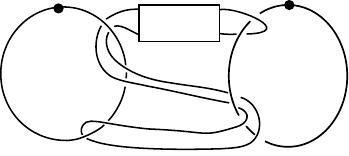}
             \put(50, 22){0} 
 \put(43, 36){$n-2$}
 \end{overpic}
            \caption{}
        \end{subfigure} 
       \caption{Kirby calculus showing $X_n$ is a disk complement for $K_n$. (a) and (b) indicate 1-1 slides. (e) is the result of sliding the $n$ framed 2-handle over the 0-framed 2-handle, bringing the resulting cancelling pair around then sliding the 0-framed 2-handle over the $n$-framed one, twisting the band.}
        \label{fig:diskcomplement}
    \end{figure}
\begin{proposition}\label{prop:diskcomplements}
    Let $n\in \bbZ$. The manifolds $\{X_n\}$ of Figure~\ref{fig:diskcomplement3} are ribbon disk complements for the knots $K_n$ of Figure~\ref{fig:diskcomplement3}, which are all genus 2.
\end{proposition}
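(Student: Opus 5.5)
The plan has two parts: a Kirby calculus argument identifying $\partial X_n$ and the ribbon structure, and a genus computation for $K_n$.

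\textbf{Ribbon disk complement.} Recall that a ribbon disk complement has a handle decomposition with only 0-, 1- and 2-handles, where the number of 2-handles equals the number of 1-handles minus one. Its boundary is $0$-surgery on the knot, and the meridian of the knot is a $0$-framed unknot in the boundary whose attachment gives back $B^4$. So I will show two things.

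First, $X_n$, which has two 1-handles (dotted circles) and one 2-handle, is built from a ribbon immersion. Concretely, I will read each dotted circle as a minimum of the disk and the 2-handle as a band, i.e.\ a saddle. This gives a ribbon disk $D_n\subset B^4$ whose complement is $X_n$.

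Second, $\partial D_n = K_n$. Following the sequence in Figure~\ref{fig:diskcomplement}, I attach a $0$-framed 2-handle along a meridian of the disk; this meridian is the $0$-framed curve appearing in panel (c). Then:
\begin{enumerate}
\item Perform the two 1-handle slides of (a) and (b), so that the $0$-framed meridian links a single dotted circle geometrically once.
\item Cancel that 1-handle against the $-1$-framed 2-handle. Alternatively, first slide the $n$-framed band over the $0$-framed curve as in (e), rotate the cancelling pair, and slide back, which twists the band.
\item After these cancellations, the remaining picture is a $0$-framed surgery on a knot. Isotoping it should give the diagram of $K_n$ with the $n-2$ twist region in Figure~\ref{fig:diskcomplement3}, as in panels (f) and (g).
\end{enumerate}
The same moves, carried out without the extra $0$-framed meridian, exhibit the dotted-circle-plus-band picture directly as the standard ribbon presentation of $K_n$: replace each dotted circle by a disk and each 2-handle by a band.

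\textbf{Genus.} From the ribbon presentation with two minima and one band, a Seifert surface for $K_n$ can be built by tubing, which gives an explicit upper bound $g(K_n)\le 2$. I would verify this by drawing a Seifert surface directly on the diagram of $K_n$, using Seifert's algorithm on a suitable diagram, or by exhibiting a genus-2 surface made from two disks and the twisted bands.

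For the lower bound, the Alexander polynomial is not enough, since ribbon knots have $\Delta = f(t)f(t^{-1})$ and the degree may drop. The plan is:
\begin{enumerate}
\item Use knot Floer homology: compute $\widehat{HFK}(K_n)$ in Alexander grading $2$, or at least show it is nonzero, since $\widehat{HFK}$ detects genus.
\item If the computation is uniform in $n$, do it via a skein exact sequence or the twisting formula in the $n-2$ twist region. Twisting along an unknot $c$ with $\mathrm{lk}(c,K)=0$ gives a family whose top-graded knot Floer group stabilizes.
\item Alternatively, observe that $K_n$ is obtained from a fixed knot by twisting two oppositely oriented strands passing through a disk. By Gabai's sutured manifold theory, the minimal genus Seifert surface can be taken disjoint from the twisting disk, so $g(K_n)$ is constant in $n$ (for all but at most one $n$). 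It would then suffice to check one value, e.g.\ $n=-1$, where $K_{-1}=12n582$ has genus 2 by KnotInfo.
\end{enumerate}

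The main obstacle is making the genus lower bound hold for every integer $n$, including the possible exceptional value. I would handle this by checking that value separately with a computation, for example by SnapPy or a direct $\widehat{HFK}$ calculation.
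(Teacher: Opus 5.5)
Your Kirby-calculus half matches the paper's in outline: follow the moves of Figure~\ref{fig:diskcomplement}. The paper justifies ribbonness more simply. Erasing the dotted circle that represents the disk leaves a diagram of $B^4$, so you do not need to reinterpret the $n$-framed $2$-handle as a band.

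The genuine gap is the lower bound $g(K_n)\ge 2$ for \emph{every} $n\in\bbZ$.
\begin{itemize}
\item Options 1 and 2 are not yet arguments. You give no computation that is uniform in $n$, and a stabilization statement only controls large $|n|$.
\item Option 3 does not close the gap either. Gabai's theorem gives $g(K_n)=g_c$ for all but at most one $n$, where $g_c$ is the minimal genus of a Seifert surface disjoint from the twisting \emph{circle} $c$, not the twisting disk. At the exceptional value, if there is one, the genus is smaller. The theorem does not tell you \emph{which} $n$ is exceptional.
\item So checking $n=-1$ only yields $g_c\ge 2$. Your plan to ``check that value separately'' has nothing finite to check, since infinitely many candidates remain.
\end{itemize}

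The idea you are missing is the special form of the twist region. It lies in the band of a band sum of the split two-component unlink, so its two strands are antiparallel. The oriented resolution of a crossing there cuts the band, and after retracting the two half-bands this gives a split link.

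The paper exploits exactly this. By Hedden--Watson (Theorem~1.3) or Wang (Theorem~1), $\widehat{HFK}(K_n)$ is independent of $n$. Since $\widehat{HFK}$ detects genus, $g(K_n)=g(K_0)=g(10_{140})=2$ for all $n$ at once. No separate upper bound and no exceptional-value analysis is needed.

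The same observation also rescues the invariant you dismissed. Consecutive $K_m$ differ by one crossing change in the band, whose oriented resolution $L_0$ is split. The Conway skein relation then gives $\nabla_{K_{m+1}}-\nabla_{K_m}=\pm z\,\nabla_{L_0}=0$. Hence $\Delta_{K_n}\doteq\Delta_{10_{140}}\doteq(t^2-t+1)^2$, which has span $4$ for every $n$. That gives $g(K_n)\ge 2$ uniformly. Combined with your explicit genus-$2$ Seifert surfaces, this also proves the genus claim. The Fox--Milnor factorization $f(t)f(t^{-1})$ is no obstruction here.
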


\begin{proof}
    Each $X_n$ is a ribbon disk complement since it is a dotted circle in a Kirby diagram for $B^4$. 
    Kirby moves describing the knots from the disk complement are shown in Figure \ref{fig:diskcomplement}. The resulting knots $K_n$ are twisted band sums of a 2-component unlink; work of Hedden-Watson \cite[Theorem 1.3]{heddenwatson} or Wang \cite[Theorem 1]{wangcrossing} shows the resulting knots have the same knot Floer homology, hence the same genus. When $n = 0$ we obtain $10_{140}$ which is genus 2 \cite{knotinfo}. 
\end{proof}
We mainly use $X_{-1}$ in this paper; we comment on the $X_n$'s at the end of Section~\ref{sec:detecting}. Proposition \ref{prop:diskcomplements} is stated as such for convenience. For this reason we also note $n=-1$ corresponds to 12n582 (from Knotinfo \cite{knotinfo}), and all $K_n$ are fibered.

\subsection{$TH$ and triangular Casson handles}\label{sec:THTriangular}
 \begin{figure}[htb!]
\centering
\begin{subfigure}[b]{0.13\textwidth}
            \begin{overpic}[width=\textwidth, 
 unit=1mm, tics=5]{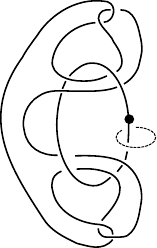}
 \put(65, 40){$a$}
  \put(3, 85){$C$}
 \end{overpic}
            \caption{}
            \label{fig:TH}
        \end{subfigure}\hspace{10mm}
        \begin{subfigure}[b]{0.16\textwidth}
            \begin{overpic}[width=\textwidth, 
 unit=1mm, tics=5]{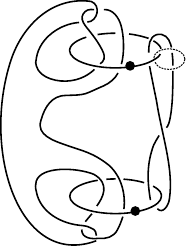}
        \put(71, 40){0} 
         \put(76, 78){$a$}
  \put(-2, 82){$C$}
            \end{overpic}
            \caption{}
            \label{fig:TH2}
        \end{subfigure} \hspace{10mm}
        \begin{subfigure}[b]{0.4\textwidth}
            \begin{overpic}[width=\textwidth, 
 unit=1mm, tics=5]{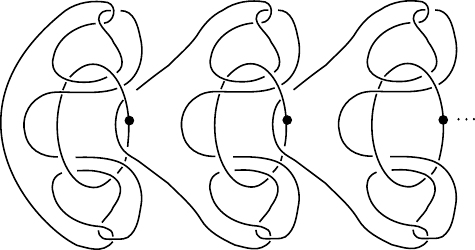}
 \put(71, 45){0} 
 \put(37, 45){0} 
   \put(2, 44){$k$}
 \end{overpic}
            \caption{}
            \label{fig:TH3}
        \end{subfigure}
\caption{In all figures, the curves $C$ and $a$ are labeled curves in the boundary, without 2-handles attached. (a): Diagram for $Tk$, as $S^1\times B^3$ with attaching circle $C$ and tip circle $a$ in the boundary. (b): Diagram showing $Tk$ as a triangular plumbed 2-handle. (c): Diagram for $TH$ attached to $B^4$ along a $k$-framed unknot.}
\end{figure}
We formally define the manifold $(TH, \partial_-TH)$, making explicit the analogy with a Casson handle.
Like a Casson handle, $TH$ is a noncompact 4-manifold with boundary $\partial_-TH\cong S^1\times \R^2$, with a framed circle in the boundary. We then show that $TH$ is a special case of a \textit{banded Casson handle}, which is a special type of \textit{triangular Casson handle}, which we define later in this section. Banded Casson handles are central to the proof of Theorem~\ref{thm:links} given in Section~\ref{sec:disk}.

We first describe the analogue of the self-plumbed 2-handles $(k,\partial_-k)$, also called kinky handles, used in the construction of Casson handles; these `triangular kinky handles' $(Tk, \partial _-Tk)$ are copies of $S^1\times B^3$ where we have subdivided the boundary in an interesting way. A Kirby diagram of $Tk$ is shown in Figure~\ref{fig:TH}. Precisely, this is a diagram of $S^1\times B^3$ with two circles $C$ and $a$ drawn in the boundary, called the \textit{attaching circle} and \textit{tip circle} of $Tk$, respectively. We define the \textit{attaching region} $\partial_- Tk = \nu C$ to be a closed tubular neighborhood of $C$, and the \textit{upper boundary} to be $\partial_+Tk := \overline{\partial Tk \setminus \partial_-Tk}$. Observe $C$ bounds an immersed disk $D \subset Tk$ with two positive double points, visible in Figure~\ref{fig:TH}; we will call $D$ the \textit{immersed core disk} of $Tk$. Figure~\ref{fig:TH} shows that $D$ induces the blackboard framing on $C$ (we will use this fact in Section~\ref{sec:disk}). The \textit{attaching framing} on $C$, however, is obtained by twisting the blackboard framing in Figure~\ref{fig:TH} by four left twists.
It follows that attaching $Tk$ to a 4-manifold along a framed knot has the same effect on the intersection form on $H_2$ as attaching a standard 2-handle along the same framed knot. To see this, notice Figure \ref{fig:TH} is obtained by scooping out a disk from $B^4$, corresponding to the 1-handle. That disk is disjoint from the curve $C\subset \partial B^4$, which is unknotted. Thus, the pair $(B^4,\nu(C))$ is a standard, unnattached, 2-handle: there is a canonical identification $\nu(C)\cong S^1\times D^2$ under which each $S^1\times pt$ bounds a disjoint disk, comprising the entire $B^4$. With this in mind, the attaching framing we described for $C\subset Tk$ is the same as the product framing on the attaching region $S^1\times D^2\subset D^2\times D^2$ of the standard 2-handle, before we delete the disk corresponding to the 1-handle in Figure~\ref{fig:TH}.  
 The circle $a$ is given the blackboard framing in Figure~\ref{fig:TH}; equivalently, $a$ is framed so that attaching a 0-framed 2-handle to $Tk$ along $a$ yields a standard 2-handle. 

The handle $(TH, \partial_-TH)$ is defined to be an infinite union of repeating $Tk$ units, attached by gluing the $n$th attaching region to the $(n-1)$st tip circle using the framings we described, where we then remove all boundary except the open attaching region of the first stage. The attaching circle $C \subset \partial_-TH$ is framed as in $Tk$. A Kirby diagram of $TH$ attached to $B^4$ along a $k$-framed unknot is given in Figure \ref{fig:TH3}. To interpret the picture, attach the given handles to $\R^3\times [0,1)$, then delete all boundary.

Gompf pointed out that $TH$ can be understood in terms of a construction, initially studied by Freedman in work towards the Poincar\'e conjecture, which generalizes the notion of a Casson handle. These \textit{triangular Casson handles} are infinite towers of self-plumbed 2-handles which have had some of their double point loops homologically paired up by 2-handles. We give a general definition, then describe a specific family.

\begin{definition}[Triangular Family of Curves]\label{def:triangularfamily}
Let $(k,\partial_-k)$ be a self-plumbed 2-handle, and recall $k$ admits a standard Kirby diagram with one 1-handle for each of its double points (See Figure~\ref{fig:triangular}, left, for an example with three double points). Let $\mu_1,...,\mu_n$ be meridians for all of these 1-handles, and let $\partial_+ k = \overline{\partial k\setminus \partial_-k}$. Give the $\mu_i$ their canonical framings (such that attaching 0-framed 2-handles to $\mu_i$ turns $k$ into a 2-handle.) A \textit{triangular family of curves for $k$}, or \textit{triangular family}, is any set of disjoint closed framed curves $\{\gamma_1,...,\gamma_n\}$ in $\partial_+ k$ defined as follows. Let $\gamma_1 = \mu_1$ with the canonical framing. Let $\gamma_2$ be any framed curve that becomes isotopic to $\mu_2$ as a framed curve, after a 0-framed 2-handle is attached to $\gamma_1$. Let $\gamma_3$ be any framed curve that becomes isotopic to $\mu_3$ as a framed curve after 0-framed 2-handles are attached to $\gamma_1$ and $\gamma_2$, and so on until we have defined $\gamma_n$.
\end{definition}

\begin{remark}
    In the above, the $[\mu_i]$ form a basis for $H_1(k)\cong \mathbb{Z}^n$, and so does any triangular family $\{[\gamma_1],...,[\gamma_n]\}$ for $k$. By Definition~\ref{def:triangularfamily}, writing each $[\gamma_j]$ in terms of the $[\mu_i]$ yields a triangular matrix. The triangular families we consider will often consist of several meridians and only a few curves that pair up multiple 1-handles. 
\end{remark}

\begin{remark}
    A similar idea is used by Bi\v zaca-Gompf to obtain the `simplest exotic $\R^4$' from a more complicated one \cite[Proof of Theorem 0.2]{bizacagompf}.
 \end{remark}

We now describe the basic building blocks for \textit{triangular Casson handles}. In this discussion, let $k$ be a self-plumbed 2-handle with meridians $\mu_1,...,\mu_n$ for its 1-handles, and let $\{\gamma_1,...,\gamma_n\}$ be any triangular family for $k$. Choose $0\le m\le n-1$ and attach $m$ 2-handles along $m$ distinct 0-framed curves $\gamma_{i_1},...,\gamma_{i_m}$. The resulting manifold $(k',\partial_- k')$ is a \textit{triangular plumbed 2-handle}. Any such $k'$ has an attaching region $\partial_-k' = \partial_-k$ with framing induced by the plumbed handle $k$. Moreover, $k'$ has $n-m$ \textit{tip circles}, which are the meridians $\mu_j$ such that $j\ne i_1,...,i_m$. The tip circles obtain framings as double point loops for the plumbed handle $k$, and attaching 0-framed 2-handles to them turns $k'$ into a 2-handle, which follows from the definition of triangular families.

\begin{definition}[Triangular Casson towers and handles]
    A \textit{height 1 triangular Casson tower} $(T_1, \partial_-T_1)$ is a triangular plumbed 2-handle. A \textit{height $n$ triangular Casson tower} $(T_n, \partial_-T_n)$ is obtained inductively by attaching triangular plumbed 2-handes to all tip circles of a height $n-1$ triangular Casson tower, using the attaching and tip framings. A \textit{triangular Casson handle} is a union of any infinite sequence $T_1\subset T_2\subset \hdots$ of nested triangular Casson towers constructed this way, minus all boundary except the interior of $\partial_- T_1$.
\end{definition}

 Triangular Casson handles immediately satisfy the three conditions of Definition~\ref{def:gch}, hence by Theorem~\ref{thm:open2handles} are homeomorphic to open 2-handles. 

In Section~\ref{sec:disk} we work with a subclass of triangular Casson towers made using a simplified version of triangular families. These \textit{banded Casson towers} are described as follows. Given a self-plumbed 2-handle $k$, it admits a standard Kirby diagram with dotted circles $c_1,...,c_n$ corresponding to its double points, as in the left diagram of Figure~\ref{fig:triangular}. Using up to $n-1$ disjoint bands in $\partial_+k$, band-sum together various $c_i$. We require that each pair $c_i,c_j$ of dotted circles has at most one band connecting it, and that there are no bands from any circle to itself. Pushing the bands into the interior of $k$ and deleting them amounts to adding 0-framed 2-handles which follow the bands \cite[Section 6.2]{gompfstipsicz}; see Figure~\ref{fig:triangular} for an example. This is a \textit{banded plumbed 2-handle} and it is a special case of a triangular plumbed 2-handle. Similarly one can define \textit{banded Casson towers/handles}. Note, to find the tip circles in a banded Casson tower, partition the top-stage dotted circles into equivalence classes, where two circles are equivalent if they are connected by a band, choose one dotted circle from each equivalence class, and take meridians to each of these dotted circles.

 \begin{figure}[htb!]
\centering
\begin{subfigure}[b]{0.2\textwidth}
            \begin{overpic}[width=\textwidth, 
 unit=1mm, tics=5]{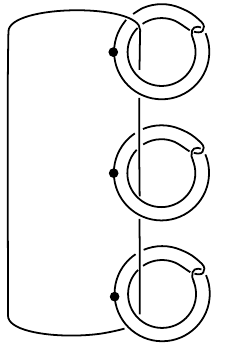}
 \end{overpic}
        \end{subfigure}\hspace{10mm}
        \begin{subfigure}[b]{0.2\textwidth}
            \begin{overpic}[width=\textwidth, 
 unit=1mm, tics=5]{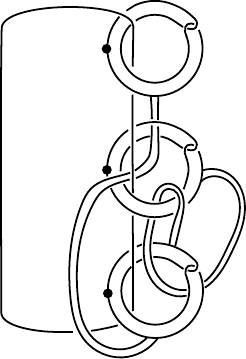}
            \end{overpic}
        \end{subfigure}\hspace{10mm}
        \begin{subfigure}[b]{0.2\textwidth}
            \begin{overpic}[width=\textwidth, 
 unit=1mm, tics=5]{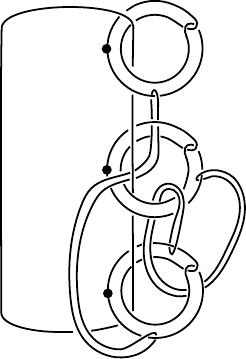}
        \put(70, 40){0} 
        \put(17, 40){0}
            \end{overpic}
        \end{subfigure}
\caption{Left: plumbed 2-handle with three double points. Middle: choice of bands connecting dotted circles. Right: banded plumbed 2-handle. The tip circle is isotopic to a meridian of any of the dotted circles. Attaching a 0-framed 2-handle to any meridian of one of the dotted circles yields a standard 2-handle.}
\label{fig:triangular}
\end{figure}

\begin{proposition}\label{prop:THhomeo}
    The manifold $(TH,\partial_-TH)$ is homeomorphic rel boundary to an open 2-handle.
\end{proposition}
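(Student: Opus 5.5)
The plan is to show that $(TH,\partial_-TH)$ is a generalized Casson handle in the sense of Definition~\ref{def:gch}, and then apply Theorem~\ref{thm:open2handles}. Since $TH$ is an infinite stack of copies of $Tk$, each attached to the previous tip circle $a$, I will take the pieces of the definition as follows:
\begin{itemize}
\item $N_k$ is the $k$th copy of $Tk$;
\item $\partial_-N_k=\nu C_k$ is its framed attaching region;
\item $\partial_+N_k=\nu a_k$ is a tubular neighborhood of its tip circle, carrying the blackboard framing;
\item $H_k=D^2\times D^2$ is a single 2-handle, with $\partial_-H_k$ identified with $\partial_-N_k$.
\end{itemize}
The canonical identification $\partial_+N_k=\partial_-H_{k+1}$ is the gluing of the $(k+1)$st attaching region to the $k$th tip circle with the stated framings. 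Condition~3 then holds immediately, because $\partial_-N_1=\nu C$ is connected.

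For Condition~1 we need that $N_k\cup H_{k+1}$, namely $Tk$ with a 2-handle attached along $a$ with the blackboard framing, is diffeomorphic rel $\partial_-$ to a standard 2-handle. The paper frames $a$ exactly so that attaching a 0-framed 2-handle to $a$ yields a standard 2-handle. I would verify this from Figure~\ref{fig:TH}: the 0-framed 2-handle along $a$ cancels the 1-handle, since $a$ runs geometrically once over it. This leaves $B^4$ with $C$ an unknot in the boundary, and as discussed above the attaching framing on $C$ agrees with the product framing. That gives the required rel-boundary diffeomorphism $\theta_k$.

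For Condition~2 we need $C$ to be nullhomotopic in $Tk\cong S^1\times B^3$. This follows because $C$ bounds the immersed core disk $D$ with two double points. Equivalently, $C$ is nullhomologous: it passes algebraically zero times over the dotted circle, and $\pi_1=\bbZ$ is abelian.

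With all three conditions checked, $TH$ is precisely the interior of $\bigcup P_k$ union the interior of $\partial_-P_1$, which is a generalized Casson handle. Theorem~\ref{thm:open2handles} then gives a homeomorphism to $(D^2\times\bbR^2,S^1\times\bbR^2)$. It restricts on $\partial_-TH$ to a homeomorphism onto $S^1\times\bbR^2$, and can be taken to be the canonical identification there, because the proper $h$-cobordism in that proof is a product on the boundary.

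The main obstacle is the framing bookkeeping in Condition~1. The key point is that the blackboard framing of $a$ is the one making the handle cancellation produce the standard 2-handle with $C$ carrying the correct attaching framing (four left twists from blackboard). I would check this by tracking the cancellation explicitly in the diagram of Figure~\ref{fig:TH2}, where $Tk$ is drawn as a triangular plumbed 2-handle. Alternatively, the same conclusion follows by observing that $TH$ is a triangular Casson handle, which the paper has already noted satisfies Definition~\ref{def:gch}.
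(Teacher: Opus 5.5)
Your proposal is correct and follows essentially the paper's route: you exhibit $TH$ as a generalized Casson handle whose stages $N_k$ are the copies of $Tk$ and whose $H_k$ are single 2-handles, then invoke Theorem~\ref{thm:open2handles}. The only difference is packaging. The paper reads Figure~\ref{fig:TH2} as a plumbed 2-handle plus a 0-framed handle along a triangular family, so $TH$ is a banded (hence triangular) Casson handle, a class already noted to satisfy Definition~\ref{def:gch}. You instead check the three conditions directly on the $S^1\times B^3$ picture of Figure~\ref{fig:TH}, and your closing alternative is exactly the paper's argument.
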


\begin{proof}
    Figure~\ref{fig:TH2} exhibits the triangular kinky handle $Tk$ as a plumbed 2-handle with two positive double points, union a 0-framed 2-handle attached along a curve pairing the double point loops. This curve together with the dashed meridian in Figure \ref{fig:TH2} constitute a triangular family of curves for the plumbed 2-handle. It follows that $TH$ is a banded Casson handle. 
\end{proof}

In general, triangular plumbed handles are not diffeomorphic rel attaching region to plumbed 2-handles (though this can happen, for example, by attaching 2-handles to a subset of the meridians $\mu_i$.) We will show this for $(Tk, \partial_- Tk)$ in Proposition~\ref{prop:notaplumbedhandle}. 
 The question of whether $TH$ is a Casson handle is more subtle and we discuss this in Section~\ref{sec:stein}.

\subsection{The family $BH_m$ of triangular generalized Casson handles}\label{sec:triangulartower}
Using triangular families of curves, we construct generalized Casson handles with good branched covering properties, which do not clearly belong to any of the previously mentioned subclasses. Let $CH_+$ be the linear chain Casson handle with only positive double points \cite{gompfstipsicz}. 
Recall \textit{surface blocks} of disk embedding theory are thickened embedded surfaces (see \cite[Section 13.4.1]{diskembedding} for an excellent source). Let $n\ge 2$ be even and let $\Sigma_n$ be the genus $n$ surface with one boundary component, so $\Sigma_n\times D^2$ is a {surface block}. It has a product-framed attaching region $\partial_- \cong (\partial\Sigma_n)\times D^2$ with attaching circle $(\partial \Sigma_n)\times 0$, and admits a standard Kirby diagram consisting of $2n$ 1-handles. 

    \begin{figure}[htb!]
\centering
\begin{subfigure}[b]{0.3\textwidth}
            \begin{overpic}[width=\textwidth, 
 unit=1mm, tics=5]{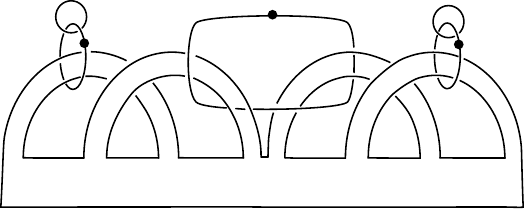}
        \put(90, 35){$CH$} 
        \put(18, 35){$CH$} 
            \end{overpic}
            \caption{}
            \label{fig:bH2}
        \end{subfigure} \hspace{5mm}
        \begin{subfigure}[b]{0.55\textwidth}
            \begin{overpic}[width=\textwidth, 
 unit=1mm, tics=5]{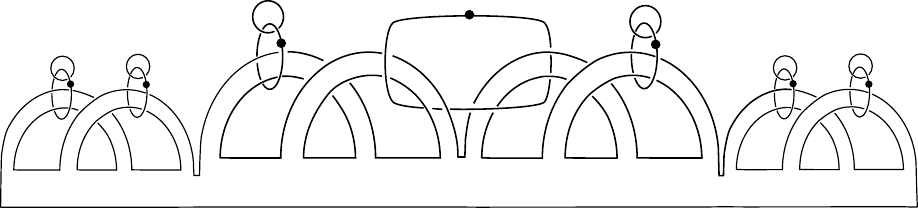}
 \end{overpic}
            \caption{}
            \label{fig:bH}
        \end{subfigure}
\caption{Kirby diagrams for $B(2,CH)$ and $B(4,CH)$; all indicated meridians have 0-framed Casson handles attached. The attaching circle is nullhomotopic in the submanifold where only first stage plumbed handles are attached to the meridians instead of Casson handles.}
\label{fig:bHs}
\end{figure}

Starting with $\Sigma_n\times D^2$, attaching a 0-framed 2-handle running over exactly two of the 1-handles (similarly to Figure~\ref{fig:TH2}), and attaching copies of a fixed choice of Casson handle $CH$ to 0-framed meridians of all other 1-handles, we obtain the manifolds $B(n,CH)$ shown in Figure~\ref{fig:bHs}. Note that each `block' $B(n,CH)$ is made with multiple copies of the same Casson handle $CH$, and has an attaching circle (the long curve in Figure~\ref{fig:bHs})  
and a single tip circle (a meridian to the center 1-handle in Figure~\ref{fig:bHs}) which are both given the blackboard framing. Each $B(n,CH)$ has fundamental group $\bbZ$ generated by the tip circle, and the attaching circle is nullhomotopic in the submanifold consisting of $\Sigma_n\times D^2$ union the triangular 2-handle, union the first stage plumbed handles of the Casson handles 
(after sliding pairs of strands over plumbed handles, the attaching circle becomes homotopic to a curve which runs over none of the dotted circles.) 
Attaching a $k$-framed 2-handle along the large curve yields a Kirby diagram of $B(n,CH)$ attached to $B^4$ along a $k$-framed unknot.

 \begin{figure}[ht]
\centering
\begin{subfigure}[b]{0.75\textwidth}
            \begin{overpic}[width=\textwidth, 
 unit=1mm, tics=5]{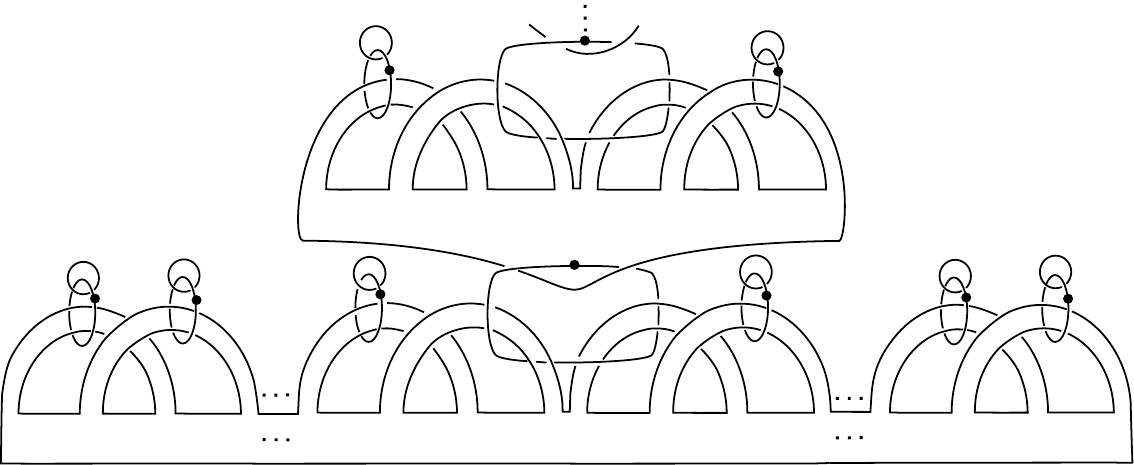}
 \put(9, 16){\small$CH$} 
 \put(18, 16){\small$CH$}
 \put(34.4, 16){\small$CH$}
 \put(68.4, 16){\small$CH$}
 \put(86, 16){\small$CH$} 
 \put(95, 16){\small$CH$}
 \put(35, 37){\small$CH$}
 \put(69.5, 36.5){\small$CH$}

  \put(-3, 10){$-1$}
 \put(26, 28){$0$}
 \end{overpic}
            \end{subfigure}
\caption{Partial Kirby diagram of $\M_m$. Changing the $-1$-framing to 0 gives $\A_m$. All Casson handles are $CH_+$ attached with the 0-framing.}
\label{fig:branchset1}
\end{figure}
\begin{figure}[ht]
\centering
\begin{subfigure}[b]{0.43\textwidth}
            \begin{overpic}[width=\textwidth, 
 unit=1mm, tics=5]{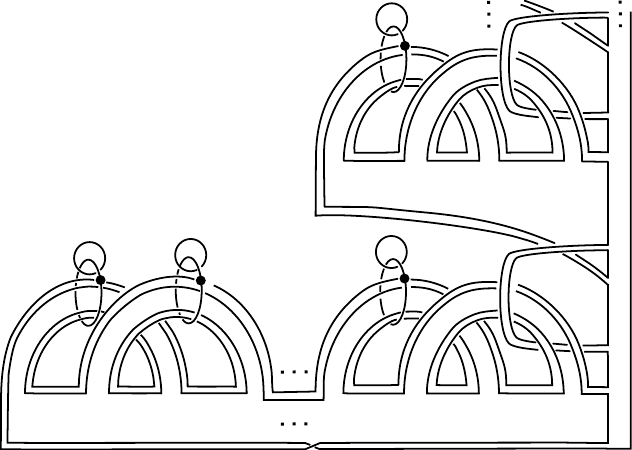}
 \put(17, 31){\small$CH$}
 \put(34, 31){\small$CH$}
 \put(65, 31){\small$CH$}
 \put(65, 68){\small$CH$}
 \end{overpic}
            \end{subfigure} \hspace{5mm}
            \begin{subfigure}[b]{0.46\textwidth}
            \begin{overpic}[width=\textwidth, 
 unit=1mm, tics=5]{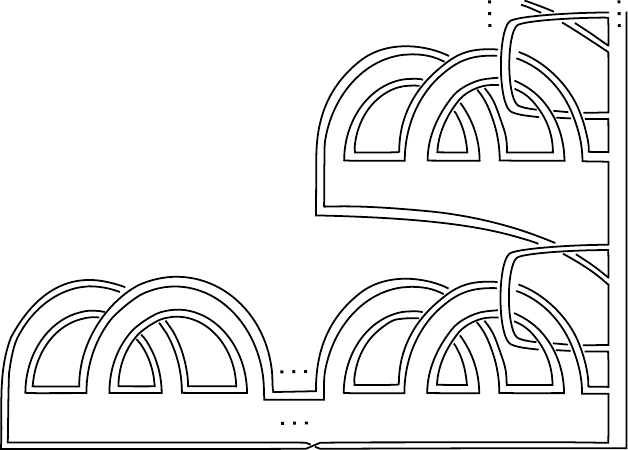}
 \end{overpic}
            \end{subfigure}
\caption{Left: Partial branch set of $\M_m$ in the standard $\R^4$ (removing the half-twist at the bottom gives the branch set of $\A_m$.) Right: homeomorphic image of the same branch set in the standard $\R^4$. All Casson handles are $CH_+$.}
        \label{fig:branchset2}
\end{figure}
\begin{figure}[htb!]
\centering
\begin{subfigure}[b]{0.5\textwidth}
            \begin{overpic}[width=\textwidth, 
 unit=1mm, tics=5]{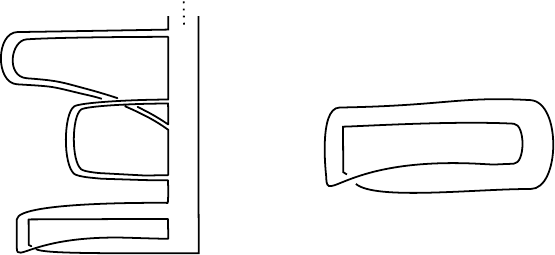}
 \end{overpic}
            \end{subfigure}
\caption{Isotopy showing the branch set of $\M_m$ is standard.}
        \label{fig:branchset3}
\end{figure}

Any infinite stack of the blocks $B(n,CH)$, where each attaching region is attached to the previous tip region with the 0-framing, and various choices of $n$ and Casson handle are used, is a generalized Casson handle. It admits an exhaustion as in Definition \ref{def:gch} by letting $n_1$ be the first copy of $\Sigma_n\times D^2$ union its triangular 2-handle, union its first layer of plumbed handles; letting $n_2$ be the second stage $\Sigma_n\times D^2$ union triangular 2-handle, union its first layer of plumbed handles, union the second layer of plumbed handles for the previous stage; and so on.  Such infinite towers of $B(n,CH)$ have $\pi$-rotational symmetry about the `stacking' axis as long as the Casson handles used enjoy the correct rotational symmetry. 
   
\begin{definition}[The handles $BH_m$]\label{def:BHm}
For $m\ge 2$ even, define the generalized Casson handle $BH_m$ to be an infinite union of $B(n,CH)$ as described above, where the first stage is $B(m,CH)$, all other stages are $B(2,CH)$, and all Casson handles are $CH_+$, i.e. the linear chain Casson handle with all positive double points. 
\end{definition}
Using the $BH_m$, we define families of manifolds $\M_m$ and $\A_m$. These will turn out to be exotic $\overline{\bbC P^2}\setminus pt$'s and $S^2\times \R^2$'s which are branched double covers of exotic surfaces in $\R^4$. In Theorem \ref{thm:exoticmobius} we will show these manifolds are not diffeomorphic. For now we verify the following.

\begin{proposition}\label{prop:mobius}
    Let $m\ge 2$ be even. Let $\M_m$ (resp. $\A_m$) denote $B^4$ union $BH_m$ attached along a $-1$-framed (resp. $0-$framed) unknot, minus the boundary. Then $\M_m$ (resp. $\A_m)$ is homeomorphic to $\overline{\bbC P^2}\setminus pt$ (resp. $S^2\times \R^2$), and is a double branched cover of $\R^4$ over a topologically standard proper open M\"obius strip (resp. topologically standard proper open annulus). 
\end{proposition}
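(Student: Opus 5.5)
The argument has two parts: a homeomorphism statement, which follows from Theorem~\ref{thm:open2handles}, and a branched covering statement, which comes from an equivariant description of the Kirby diagrams in Figures~\ref{fig:branchset1}--\ref{fig:branchset3}.

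\textbf{Homeomorphism type.} We first check that $BH_m$ is a generalized Casson handle in the sense of Definition~\ref{def:gch}, using the exhaustion $n_1,n_2,\dots$ described just before Definition~\ref{def:BHm}.

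Item 1 of Definition~\ref{def:gch} asks that $n_kh_{k+1}\cong h_k$. This holds because attaching 0-framed 2-handles to the tip circle and to the meridians standardizes everything, block by block. Inside each block $B(n,CH)$, the triangular 2-handle together with a 0-framed handle on the tip meridian cancels the two paired 1-handles, as in the definition of a triangular family. The first-stage kinky handles of the $CH_+$'s, capped by 2-handles, cancel the remaining 1-handles. What is left is $\Sigma_n\times D^2$ with all of its 1-handles cancelled, which is a 2-handle. The leftover plumbed stages of earlier Casson handles become 2-handles in the same way once their tip circles are capped.

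Item 2 asks that $\partial_-N_k\hookrightarrow N_k$ be componentwise nullhomotopic. For $n_1$ this is the stated nullhomotopy of the attaching circle in $\Sigma_m\times D^2$ union the triangular handle union the first-stage plumbed handles. For the later pieces it holds componentwise: each boundary component is either the attaching circle of a $B(2,CH)$ or the attaching circle of a kinky handle, and a kinky handle's attaching circle is nullhomotopic in that handle.

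Item 3 holds because $\partial_-N_1$ is connected, being the attaching region of the first block.

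With the three conditions verified, Theorem~\ref{thm:open2handles} gives a homeomorphism $(BH_m,\partial_-BH_m)\cong(D^2\times\R^2,S^1\times\R^2)$ rel boundary. Gluing this homeomorphism to the identity on $B^4$ shows that $\M_m$ (resp. $\A_m$) is homeomorphic to $B^4$ with an open 2-handle attached along a $-1$-framed (resp. 0-framed) unknot, with the boundary removed. That manifold is $\overline{\bbC P^2}\setminus pt$ (resp. $S^2\times\R^2$).

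\textbf{Branched cover.} The plan is to use the $\pi$-rotational symmetry of the towers of $B(n,CH)$ about the stacking axis. This requires the Casson handles $CH_+$ to be drawn symmetrically, or, as in Figure~\ref{fig:branchset1}, arranged in swapped pairs with the central ones on the axis.

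Next apply the Akbulut--Kirby algorithm \cite{akbulutkirby} equivariantly, stage by stage. Each 1-handle pair swapped by the rotation, and each 2-handle meeting the axis in two points, descends to a band or tube on a surface in the quotient $B^4$. Carrying this out on every finite stage and taking the union gives a proper surface in $\R^4$ whose double branched cover is $\M_m$ (resp. $\A_m$). This surface is the one drawn in Figure~\ref{fig:branchset2}, with the Casson handles in the quotient appearing as images of the symmetric ones. The $-1$ framing contributes a half twist in the band, which produces a M\"obius strip; the $0$ framing gives an annulus. Since the quotient of $B^4$ with the interior of the tower is an open 4-ball, the quotient is $\R^4$.

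\textbf{Topological standardness.} In the branch set of Figure~\ref{fig:branchset2}, each quotient Casson handle region is a $(B^4,\text{unknotted disk})$-type pair glued in, and it can be replaced topologically by a standard ball pair. Concretely, the quotient regions are themselves generalized Casson handles (or the branched pair is locally the quotient of one), so Theorem~\ref{thm:open2handles}, applied in an equivariant form or to the quotient, lets us swap each of them for a standard 2-handle up to homeomorphism. After this swap, the isotopy of Figure~\ref{fig:branchset3} shows the surface is the standard M\"obius strip (resp. annulus).

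\textbf{Main obstacle.} The hardest step is this last swap. We need the homeomorphism to carry the branch locus along, but Theorem~\ref{thm:open2handles} is not equivariant. My first attempt would be to avoid equivariance entirely: observe that in the quotient, the $CH$'s become Casson handles attached in $\R^4$ along curves disjoint from the surface, so the surface is obtained from a standard one by removing and reattaching Casson handles away from it. Replacing each such Casson handle by a 2-handle is then a homeomorphism of the ambient space that fixes a neighborhood of the surface.
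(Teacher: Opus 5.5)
Your proposal follows the paper's proof. The homeomorphism comes from Theorem~\ref{thm:open2handles} applied to $BH_m$; your verification of Definition~\ref{def:gch} spells out what the paper asserts just before Definition~\ref{def:BHm}. The branch set comes from quotienting by the $\pi$-rotation about the stacking axis. Topological standardness comes from replacing the quotient Casson handles with 0-framed 2-handles by a homeomorphism of the ambient space, then applying the isotopy of Figures~\ref{fig:branchset2} and \ref{fig:branchset3}.

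The ``first attempt'' in your last paragraph is exactly the paper's argument, and it needs no equivariant form of Theorem~\ref{thm:open2handles}. The Casson handles of $BH_m$ come in pairs exchanged by the rotation, and each pair descends to a single quotient 1-handle with a Casson handle on its meridian. So none of them meets the axis, and every quotient Casson handle is attached along a curve disjoint from the branch surface. Your intermediate discussion of $(B^4,\text{unknotted disk})$-type pairs, and of central Casson handles on the axis, can simply be dropped.

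The one step you leave unjustified is that the quotient is the \emph{standard} smooth $\R^4$. This is needed for the branch sets to be exotic knottings in $\R^4_{std}$ rather than surfaces in some possibly exotic $\R^4$. A merely topological identification of the quotient with $\R^4$ is not enough. Your sentence ``the quotient of $B^4$ with the interior of the tower is an open 4-ball'' does not supply this. The paper's reason is that the quotient Kirby diagram exhibits it as an infinite end-sum of copies of $\mathrm{int}(S^1\times B^3\cup CH_+)$, a 1-handle cancelled by a Casson handle on its meridian. Each such copy is known to be diffeomorphic to $\R^4_{std}$.
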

A partial Kirby diagram of $\M_m$ is shown in Figure~\ref{fig:branchset1}. By topologically standard, we mean write $\R^4 = \R^3\times \R$, take an embedded copy of the compact unknotted embedded surface in $\R^3\times\{0\}$, and adjoin a standard proper collar $(\R^3, \text{boundary})\times [0,\infty)$.

\begin{proof}
The homeomorphisms follow from Theorem \ref{thm:open2handles} since $BH_m$ are homeomorphic rel boundary to open 2-handles. 

Note $CH_+$ has $\pi$-rotational symmetry. Quotienting $B^4$ union $BH_m$ by its $\pi$-rotational symmetry yields a branch surface in $\R^4_{std}$, indicated in the left of Figure~\ref{fig:branchset2} for the case of $\M_m$; note, this is a nonstandard Kirby diagram of $\R^4_{std}$ obtained as an infinite end-sum of copies of the interior of $S^1\times B^3$ union $CH_+$, which are known to be $\R^4_{std}$. The branch surface is a properly embedded (open) annulus or M\"obius strip depending on the attaching framing (appearing as a half-twist in the lower band of the surface). Replacing the Casson handles with 0-framed 2-handles amounts to applying a homeomorphism to $\R^4$, hence does not affect the topological isotopy class of the branch set. After this is done, the surface becomes smoothly ambiently isotopic to the standard M\"obius strip (or annulus) in $\R^4$, as indicated by Figures \ref{fig:branchset2} and \ref{fig:branchset3}. 
\end{proof}

    \section{Stein structures and genus bounds}\label{sec:stein}

We begin by realizing the exotic $\R^4$, $\cR$, of Figure~\ref{fig:exoticR4}, and the manifolds $\M_m$ and $\A_m$ of Proposition~\ref{prop:mobius} as Stein manifolds. We then apply a Stein adjunction inequality to $TH$, $\M_m$, and $\A_m$, proving Theorem \ref{thm:mobius}. Note that $\cR$ is made by attaching $(TH,\partial_-TH)$ to the disk complement $X_{-1}$ of Figure~\ref{fig:diskcomplement3}. 
The handle $TH$ is Stein in the sense that it can often be attached to other 4-manifolds to give Stein manifolds.
\begin{theorem}\label{thm:stein}
    The manifold $\cR$ is Stein, and $(TH,\partial_-TH)$ is a Stein handle.
\end{theorem}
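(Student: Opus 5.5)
We will use Gompf's criterion for Stein handlebodies: an (possibly infinite) handlebody with only 0-, 1- and 2-handles is Stein once it is drawn in standard form, with the 1-handles as dotted circles (or balls) and each 2-handle attached along a Legendrian knot whose framing equals $\mathrm{tb}-1$. For infinite handlebodies we rely on Gompf's extension of Eliashberg's theorem to infinite handle decompositions, as in his constructions of Stein Casson handles. So the plan is to Legendrian realize the diagram of Figure~\ref{fig:exoticR4}, one $Tk$ stage at a time.

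\textbf{Step 1: Legendrian realizing one stage $Tk$.} Redraw Figure~\ref{fig:TH2} in Gompf's standard form. The 1-handles become pairs of balls, and the 0-framed "triangular" 2-handle becomes a Legendrian front running over the two 1-handles. Its Thurston--Bennequin number is reduced by adding zigzags until $\mathrm{tb}-1=0$. This works provided we can first make $\mathrm{tb}\ge 1$, and for a curve running through 1-handles with a clasp that is achievable using Gompf's standard trick of routing the strands through the 1-handles with appropriate crossings.

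\textbf{Step 2: checking the attaching framings.} Next we realize the attaching circle $C$ of the next stage, which is attached to the tip circle $a$ of the previous stage, as a Legendrian knot. We must check that the required framing is at most $\mathrm{tb}(C)-1$. The key point is that the attaching framing is four left twists from the blackboard framing, i.e. the blackboard framing $-4$. Because of the two positive double points, a front projection of $C$ passing through the 1-handles has writhe contributions that keep $\mathrm{tb}$ large relative to this framing. Each such strand picks up extra $\mathrm{tb}$ from its crossings (compare the kinky handle case, where a positive self-plumbing makes a $0$-framed attachment Legendrian realizable with $\mathrm{tb}$ up to about $+1$). Once $\mathrm{tb}(C)-1$ is at least the desired framing, we add zigzags to reach it exactly. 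Since the stages are identical, the realization is periodic, and the infinite tower of $Tk$'s is a Stein handle in Gompf's sense: it can be attached, along a Legendrian attaching circle with framing at most $\mathrm{tb}-1$ minus the built-in offset, to any Stein manifold.

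\textbf{Step 3: realizing $\cR$.} For $\cR$, we Legendrian realize the disk complement $X_{-1}$. Its two dotted circles and the $-1$-framed 2-handle must be drawn with the $-1$-framed curve satisfying $\mathrm{tb}=0$, which is plausible because of how it runs through the 1-handles. The attaching circle of $TH$ must also be Legendrian with enough $\mathrm{tb}$ for the $0$-framing. Then we glue the periodic realization from Step 2.

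The main obstacle is the bookkeeping of $\mathrm{tb}$ against the prescribed framings, especially the four-twist offset on $C$ and the $-1$-framed ribbon handle. I expect the first thing to fail is getting enough $\mathrm{tb}$ on $C$. If so, I would first try isotoping $C$ through the 1-handles, or sliding it over the triangular 2-handle, to create positive crossings in the front.
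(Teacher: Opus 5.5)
Your architecture matches the paper's. You put $X_{-1}$ and a single $Tk$ unit into Gompf's standard form, check that every $2$-handle has framing $tb-1$, and let periodicity of the stacking give an infinite Stein handlebody. The gap is that the proposal stops where the proof actually begins. Every inequality that makes up the theorem is left as ``plausible'', ``achievable'' or ``I expect''. None of them is automatic: already the $0$-framed $2$-handle on the unknot, $S^2\times D^2$, is not Stein. So the whole point is to show that the positive clasps of $Tk$ supply enough $tb$, and that requires an explicit front with computed $tb$.

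The paper supplies these fronts as follows.
\begin{itemize}
\item For $X_{-1}$, it slides the $2$-handle under the lower $1$-handle. Because the handle runs over that $1$-handle algebraically nontrivially, this changes the diagrammatic framing coefficient from $-1$ to $-3$. It then draws a front with $tb=-2$. This also shows that your target ``$-1$-framed with $tb=0$'' is tied to one particular drawing, since the coefficient is not invariant under such moves.
\item For $Tk$, it exhibits the triangular X-shaped handle with $tb=1$, so its $0$-framing is already Stein. The next stage's attaching handle has $tb=3$, which is then stabilized twice.
\end{itemize}

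Your Steps 2 and 3 also skip two pieces of bookkeeping. First, the ``blackboard minus four'' description of $C$ in Figure~\ref{fig:TH} is not the quantity to compare with $tb$. In the stacked diagram the next stage enters as a $0$-framed $2$-handle along the previous tip circle. You must track that tip circle and its framing through the isotopy that puts the previous stage into standard form. The paper does this with its blue curves, including the untwisting in Figure~\ref{fig:steinpiece}(g)--(h). Only then can you compute $tb$ of the new attaching curve in the resulting front.

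Second, ``the stages are identical'' does not by itself give periodicity, because the first $Tk$ is attached to $X_{-1}$ rather than to a $Tk$. You have to check that the local Legendrian picture at the interface is the same in both cases. The paper does this by observing that stacking $Tk$ on $X_{-1}$ and stacking $Tk$ on $Tk$ both yield Figure~\ref{fig:steinpiecepart2b}, so the same double stabilization works at every stage. Without these explicit realizations, the proposal is a plan rather than a proof.
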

\begin{proof}
    By \cite{gompfstipsicz} it suffices to describe $\cR$ as a Kirby diagram in standard form, where all 2-handles are attached along Legendrian knots $L$ with framing $tb(L)-1$.  
    Figures \ref{fig:steindisk} and \ref{fig:steinpiece} show how to Stein realize the disk complement $X_{-1}$ and the triangular kinky handle $(Tk,\partial_-Tk)$ respectively. To Stein realize their unions we record the relevant smooth attaching curves, drawn in blue. 
    \begin{figure}[t!]
    \centering
        \begin{subfigure}[b]{0.2\textwidth}
            \begin{overpic}[width=\textwidth, 
 unit=1mm, tics=5]{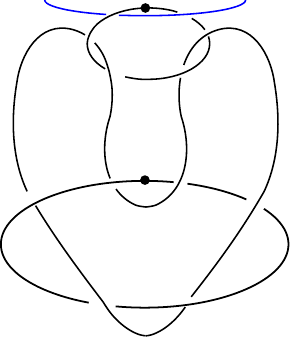}
 \put(75, 90){$-1$}
 \end{overpic}
            \caption{}
        \end{subfigure} \hspace{5mm}
        \begin{subfigure}[b]{0.23\textwidth}
            \begin{overpic}[width=\textwidth, 
 unit=1mm, tics=5]{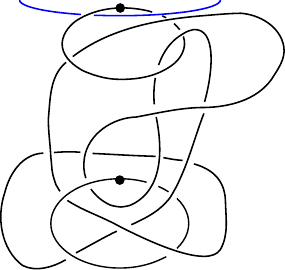}
             \put(90, 91){$-1$}
 \end{overpic}
            \caption{}
        \end{subfigure}\hspace{5mm}
        \begin{subfigure}[b]{0.22\textwidth}
            \begin{overpic}[width=\textwidth, 
 unit=1mm, tics=5]{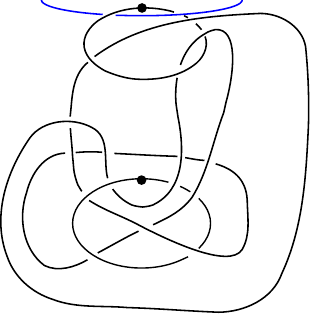}
             \put(95, 92){$-1$}
 \end{overpic}
            \caption{}
            \label{fig:steindiskbeforeslide}
        \end{subfigure}\hspace{5mm}
        \begin{subfigure}[b]{0.22\textwidth}
            \begin{overpic}[width=\textwidth, 
 unit=1mm, tics=5]{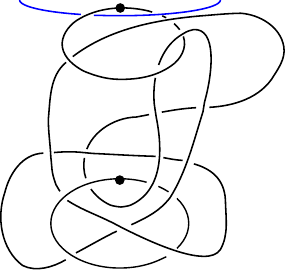}
             \put(95, 88){$-3$}
 \end{overpic}
            \caption{}
            \label{fig:steindiskafterslide}
        \end{subfigure}
        
        \begin{subfigure}[b]{0.22\textwidth}
            \begin{overpic}[width=\textwidth, 
 unit=1mm, tics=5]{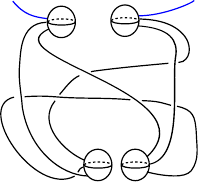}
             \put(91, 75){$-3$}
 \end{overpic}
            \caption{}
        \end{subfigure}\hspace{10mm}
        \begin{subfigure}[b]{0.25\textwidth}
            \begin{overpic}[width=\textwidth, 
 unit=1mm, tics=5]{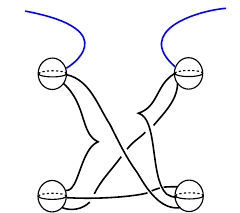}
             \put(65, 30){$-3$}
 \end{overpic}
            \caption{}
            \label{fig:finaldiskcomplement}
        \end{subfigure}
       \caption{Showing the disk complement $X_{-1}$ in the construction of $\cR$ is Stein. The blue curves are used to track smooth attaching data. 
       }
        \label{fig:steindisk}
    \end{figure}
 \begin{figure}[ht!]
    \centering
        \begin{subfigure}[b]{0.18\textwidth}
        \includegraphics[width=\textwidth]{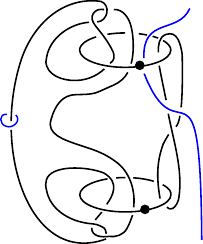}
            \caption{}
        \end{subfigure}
        \hspace{5mm}
        \begin{subfigure}[b]{0.12\textwidth}
            \includegraphics[width=\textwidth]{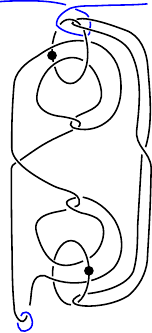}
            \caption{}
        \end{subfigure} 
        \hspace{5mm}
        \begin{subfigure}[b]{0.12\textwidth}
            \includegraphics[width=\textwidth]{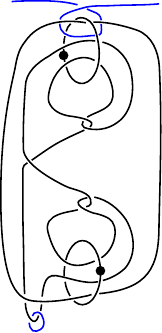}
            \caption{}
        \end{subfigure}\hspace{5mm}
        \begin{subfigure}[b]{0.15\textwidth}
            \includegraphics[width=\textwidth]{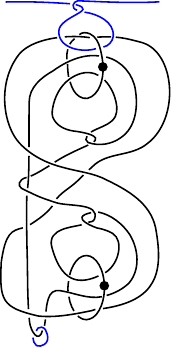}
            \caption{}
        \end{subfigure} \hspace{5mm}
          \begin{subfigure}[b]{0.15\textwidth}
            \includegraphics[width=\textwidth]{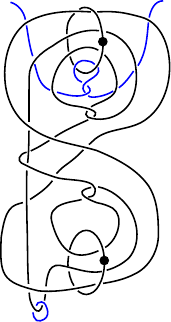}
            \caption{}
        \end{subfigure}
      
        \begin{subfigure}[b]{0.18\textwidth}
        \includegraphics[width=\textwidth]{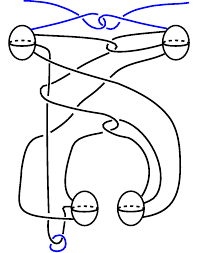}
            \caption{}
        \end{subfigure}
        \hspace{10mm}
        \begin{subfigure}[b]{0.2\textwidth}
        \includegraphics[width=\textwidth]{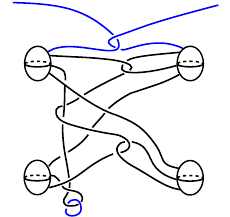}
            \caption{}
        \end{subfigure}
        \hspace{10mm}
        \begin{subfigure}[b]{0.2\textwidth}
        \includegraphics[width=\textwidth]{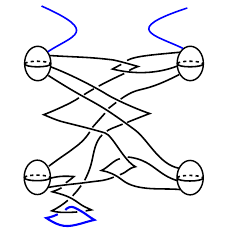}
            \caption{}
            \label{fig:steinpieceuseful}
        \end{subfigure}
       \caption{Kirby diagrams of $(Tk,\partial_-Tk)$. The blue curves track smooth attaching data. In (g)-(h), we (fully) untwist the top blue curve, which does not affect the smooth type of the attachment.}
        \label{fig:steinpiece}
    \end{figure}
In Figure \ref{fig:steindisk} all moves are diagram isotopies except going from Figure \ref{fig:steindiskbeforeslide} to \ref{fig:steindiskafterslide}, which shows the 2-handle sliding under the lower 1-handle. In Figure~\ref{fig:finaldiskcomplement} the black curve has 
    $tb = -2$, thus the required $-3$-framed attachment is Stein. Figure \ref{fig:steinpiece} tracks attaching data of $Tk$ through isotopy. 

Omitting the lower (blue) meridian of Figure \ref{fig:steinpieceuseful} and Legendrian realizing as in Figure \ref{fig:steinpiecepart2a}, we see an `X' shaped 2-handle with $tb=1$ and the 2-handle corresponding to the attaching curve of $Tk$, which has $tb = 3$. Stacking $Tk$ (Figure~\ref{fig:steinpieceuseful}) on top of $X_{-1}$ (Figure~\ref{fig:finaldiskcomplement}) yields Figure \ref{fig:steinpiecepart2b} (where only the top 1-handle of $X_{-1}$ is shown); the `X' shaped 2-handle has $tb=1$ and the other 2-handle has $tb = 3$. Stabilizing the 2-handle which has $tb = 3$ twice results in Figure \ref{fig:steinpiecepart2c}, where both 0-framed attachments are Stein. Attaching $Tk$ to $Tk$ using the blue curves also yields Figure~\ref{fig:steinpiecepart2b}, hence the desired 0-framed attachment can be performed by Stein handle attachments.
    \end{proof}

    \begin{figure}[htb!]
        \centering
         \begin{subfigure}[b]{0.28\textwidth}
        \includegraphics[width=\textwidth]{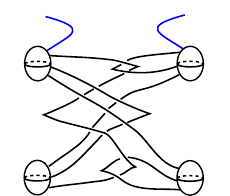}
            \caption{}
            \label{fig:steinpiecepart2a}
        \end{subfigure} \hspace{8mm}
        \begin{subfigure}[b]{0.28\textwidth}
            \includegraphics[width=\textwidth]{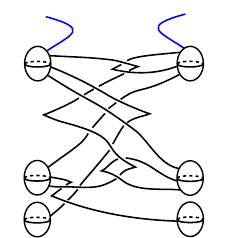}
            \caption{}
            \label{fig:steinpiecepart2b}
        \end{subfigure} \hspace{10mm}
        \begin{subfigure}[b]{0.28\textwidth}
        \includegraphics[width=\textwidth]{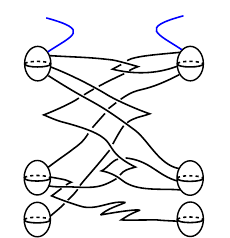}
            \caption{}
\label{fig:steinpiecepart2c}\end{subfigure}
        
       \caption{In all three figures, blue (smooth) curves are used to track how subsequent $Tk$ units are attached. Three Stein realizations: $Tk$ attached to $B^4$ along a (2-framed) unknot; then $Tk$ attached to the upper 1-handle of $X_{-1}$, or attached to the upper 1-handle of $Tk$; then the stabilized version so all smooth 0-framed attachments are Stein. }
        \label{fig:steinpiecepart2}
    \end{figure}
  \begin{figure}[tb]
    \centering
        \begin{subfigure}[b]{0.15\textwidth}
            \begin{overpic}[width=\textwidth, 
 unit=1mm, tics=5]{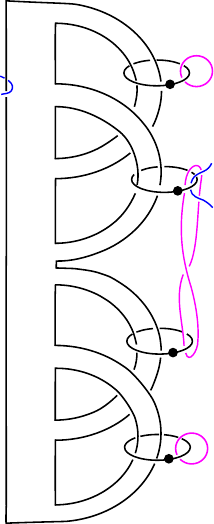}
 \end{overpic}
            \caption{}
        \end{subfigure} \hspace{5mm}
        \begin{subfigure}[b]{0.15\textwidth}
            \begin{overpic}[width=\textwidth, 
 unit=1mm, tics=5]{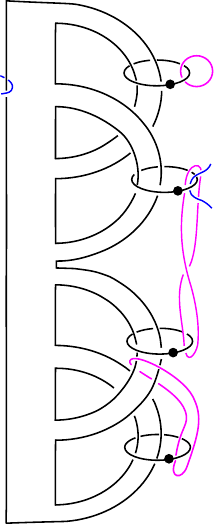}
 \end{overpic}
            \caption{}
        \end{subfigure}\hspace{5mm}
        \begin{subfigure}[b]{0.165\textwidth}
            \begin{overpic}[width=\textwidth, 
 unit=1mm, tics=5]{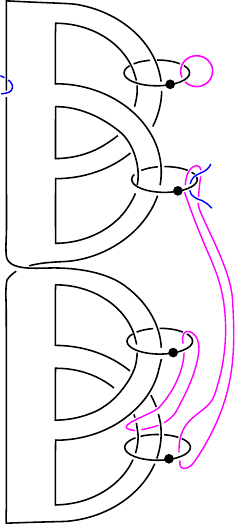}
 \end{overpic}
            \caption{}
            \label{fig:fff}
\end{subfigure}\hspace{5mm}\begin{subfigure}[b]{0.28\textwidth}
            \begin{overpic}[width=\textwidth, 
 unit=1mm, tics=5]{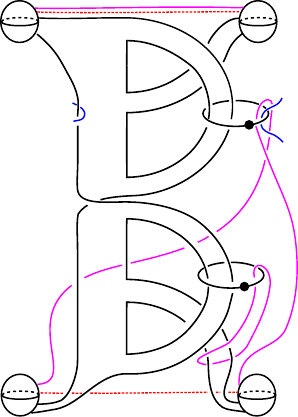}
 \end{overpic}
            \caption{}
            \label{fig:ff}
        \end{subfigure}
       \caption{Isotoping $B(2,\cdot)$ including attaching data.
       }
        \label{fig:steinb2part1}
    \end{figure}
\begin{theorem}\label{thm:chern}
    Let $m \ge 2$ be even. The manifolds $\M_m$ and $\A_m$ of Proposition \ref{prop:mobius} admit Stein structures such that if $[\Sigma_{\M_m}]$ and $[\Sigma_{\A_m}]$ denote respective generators of $H_2$, then we have $|\langle c_1(\M_m),[\Sigma_{\M_m}] \rangle| = 2m-1$ and $|\langle c_1(\A_m),[\Sigma_{\A_m}] \rangle| =2m-2$.
\end{theorem}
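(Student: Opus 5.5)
We will Stein realize $\M_m$ and $\A_m$ in Gompf's standard form, with dotted circles as standard 1-handles and every 2-handle attached along a Legendrian $L$ with framing $tb(L)-1$. We then compute the Chern class by the formula $\langle c_1, [h]\rangle = \mathrm{rot}(L)$, which holds for each 2-handle $h$; on a general class it is read off through the cellular chain complex. The unique generator of $H_2$ comes from the first 2-handle, the $-1$- or $0$-framed unknot $U$. Every other 2-handle (triangular handles, the plumbed stages of the $CH_+$'s) is attached along a curve running over 1-handles algebraically nontrivially, or else cancels a 1-handle. So $H_2(\M_m)\cong\bbZ$ is generated by a class $\Sigma$ supported on $U$ plus whatever corrections come from handles that $U$ runs over. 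Since $U$ winds geometrically over the $2m$ 1-handles of $\Sigma_m\times D^2$ in the first block $B(m,CH)$, but algebraically zero times over each, the class $[\Sigma]$ is represented by the core of $U$ capped off using chains in the 1-handle/2-handle part.

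The steps are as follows. First, following Figures~\ref{fig:steinb2part1} and the $Tk$ computation in Theorem~\ref{thm:stein}, we isotope each block $B(2,CH)$, together with its blue attaching data, into a Legendrian picture. There the triangular 2-handle is an ``X''-shaped curve with $tb$ large enough, as in Figure~\ref{fig:steinpiecepart2a}, and the stacking curve is stabilized as needed. Since $CH_+$ has only positive double points, it is Stein realizable by the standard argument of \cite{gompfstipsicz}, as is each 0-framed attachment of a $CH_+$ to a meridian. Second, we do the same for the first block $B(m,CH)$, drawing $\Sigma_m\times D^2$ as $2m$ 1-handles in standard position. Third, we Legendrian realize $U$ running over all $2m$ 1-handles, so that its smooth framing $-1$ or $0$ equals $tb(U)-1$ after stabilizations; each 1-handle traversal contributes to $tb$ through cusps and crossings. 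We then choose the stabilizations so that their signs all agree, which maximizes $|\mathrm{rot}(U)|$.

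Finally, we compute $\langle c_1,[\Sigma]\rangle$. The other 2-handles are disjoint from the homology class of $U$ apart from the 1-handle corrections, so their rotation numbers do not enter and this number is $\mathrm{rot}(U)$. With the layout above, $U$ should have $tb=2m-1$ with $\mathrm{rot}=0$ before stabilizing. To reach framing $0$ we need $tb=1$, which takes $2m-2$ same-sign stabilizations and gives $|\mathrm{rot}|=2m-2$ for $\A_m$. For framing $-1$ we need one more stabilization, giving $2m-1$ for $\M_m$; the difference of exactly $1$ between the two answers is consistent with this.

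The main obstacle is the second and third steps: finding an explicit Legendrian front for the first block in which $U$ passes through the $2m$ 1-handles with $tb$ exactly $2m-1$, while the triangular and $CH_+$ attachments remain Stein-compatible. Here I would first generalize the $m=2$ picture of Figure~\ref{fig:ff} by adding each genus pair as one ``dotted-circle pair'' summand. Each such summand should raise $tb(U)$ by $2$ and leave $\mathrm{rot}$ unchanged.
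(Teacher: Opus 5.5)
Your proposal follows essentially the same route as the paper: Stein realize in Gompf's standard form, note that $H_2$ is generated by the 2-handle $U$ (a cellular cycle, so $\langle c_1,[\Sigma]\rangle=\mathrm{rot}(U)$), and use a front in which $U$ has $tb=2m-1$ and $\mathrm{rot}=0$, followed by $2m-2$ or $2m-1$ homogeneous stabilizations. The paper differs only in organization --- it first Stein realizes the tower without the $CH_+$'s, checks that the Casson handle meridians have $tb=0$ so that one positive kink per first stage suffices, and pulls $c_1$ back along the holomorphic inclusion --- and it obtains the $tb=2m-1$ front by the same genus-by-genus extension of the $m=2$ picture that you propose.
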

\begin{proof}
    We first Stein realize the infinite unions of $B(n,CH_+)$ \textit{without} their Casson handles attached. This is done by constructing standard form handlebodies as before. Tracking attaching circles of the Casson handles shows they can all be taken to have $tb = 0$. Then, by Gompf's proof of \cite[Theorem 3.1]{gompfsteinhandlebody} (see also \cite[Theorem 2.4]{gompfgenera} for another explanation), attaching 0-framed copies of $CH_+$ along these curves yields a Stein manifold (we only need one positive double point in the first stage to correct $tb = 0$, and further stages can always be taken to have a single positive double point). 
    Note that the inclusion induced by adding the Casson handles is of complex manifolds, and the Casson handles do not affect $H_2$. By pulling back $c_1$, we find the action of $c_1$ on a generator of $H_2$ is the same as before the Casson handles were attached, and can be computed as a rotation number in the standard form handlebody of the first step \cite[Proposition 2.3]{gompfsteinhandlebody}.

    To build the first part, recall $B(n,CH_+)$ without its Casson handles (denoted $B(n,\cdot)$) is a disk bundle over a genus $n$ surface, union a single 0-framed 2-handle. First consider the $n= 2$ case. In Figure~\ref{fig:steinb2part1} we begin with a disk bundle over the genus 2 surface, drawn together with the attaching data needed to reconstruct $B(2,CH_+)$ (shown as pink curves) and the attaching data needed to combine multiple units (shown as blue curves). We will isotop this disk bundle into its standard Stein form \cite{gompfsteinhandlebody}, bringing along the attaching curves. This is started in Figure \ref{fig:steinb2part1} and continued in Figure \ref{fig:steinb2part2} for the case of $B(2,\cdot)$. The dashed red lines are used to track framings; note we often slide 2-handles under 1-handles when the 2-handle runs over the 1-handle algebraically 0 times, which does not change the smooth framing. Completed diagrams are in Figure \ref{b2noattach} for the \textit{first-stage} version of $B(2,\cdot)$ (i.e. the copy of $B(2,\cdot)$ that contains the attaching circle of $BH_2$) and Figure \ref{fig:b2attach} for the higher-stage version. In both, the triangular 2-handle has $tb = 1$ as required and the Casson handle attaching circles have $tb=0$. The large 2-handle has $tb = 1$ in the higher-stage version, and has $tb = 3$ (and $r = 0$) when attached to $B^4$. It is straightforward to see that when two units are combined using the attaching curves, the second stage attaching 2-handle can be taken to have $tb = 1$. This yields a standard form handlebody for $BH_2$ without its Casson handles. In order to attach $BH_2$ to $B^4$ with smooth 0 or $-1$ framing, we must (homogeneously) stabilize the first curve (the black 2-handle in Figure \ref{b2noattach}) 2 or 3 times, respectively. Since this 2-handle generates homology, after attaching Casson handles we find $|\langle c_1(M_2), [\Sigma_{M_2}]\rangle| = 3$ and $\langle|c_1(A_2), [\Sigma_{A_2}]\rangle| = 2$.
    
    For $n\ge 4$, note $B(n,CH_+)$ only appears in $BH_n$ as the first stage, so we only need to track how $B(2,CH_+)$ is attached on top. By swinging genus 1 blocks of $B(n,CH_+)$ around as in Figure~\ref{fig:swing}, and proceeding similarly to the genus 2 case, we obtain Figure~\ref{fig:steinBn}. Note the black 2-handle has $tb = 2n - 1$ and $r=0$. Thus, we must stabilize it $2n-1$ or $2n-1$ times, to attach $BH_n$ to $B^4$ with 0- or $-1$- framing. This yields the claimed rotation numbers. 
\end{proof}
\begin{figure}[htb!]
    \centering
            \begin{overpic}[width=0.6\textwidth, 
 unit=1mm, tics=5]{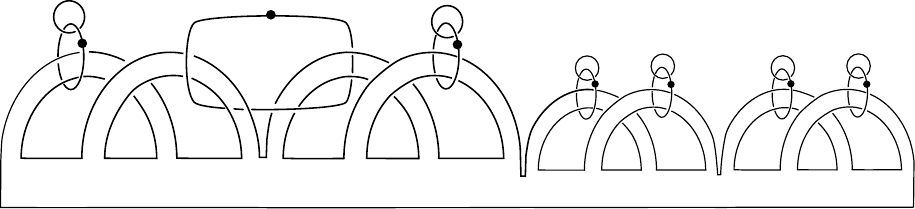}
 \end{overpic}
            \caption{Result of swinging around genus 1 blocks in $B(4,CH_+)$.}
            \label{fig:swing}
        \end{figure}
 \begin{figure}[htb!]
    \centering
        \begin{subfigure}[b]{0.29\textwidth}
            \begin{overpic}[width=\textwidth, 
 unit=1mm, tics=5]{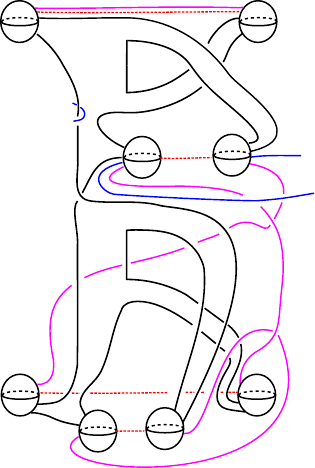}
 \end{overpic}
            \caption{}
        \end{subfigure}\hspace{5mm}
         \begin{subfigure}[b]{0.28\textwidth}
            \begin{overpic}[width=\textwidth, 
 unit=1mm, tics=5]{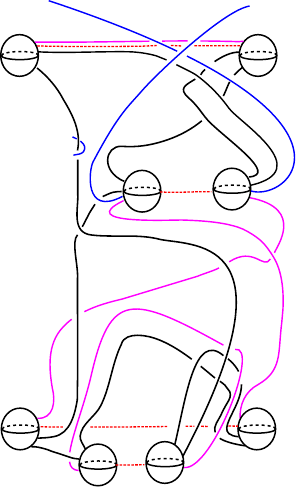}
 \end{overpic}
            \caption{}
        \end{subfigure}\hspace{5mm}
         \begin{subfigure}[b]{0.24\textwidth}
            \begin{overpic}[width=\textwidth, 
 unit=1mm, tics=5]{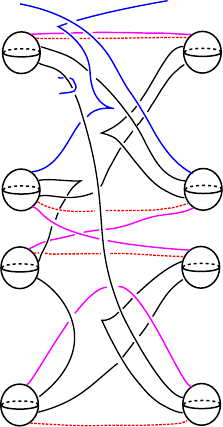}
 \end{overpic}
            \caption{}
        \end{subfigure}\hspace{10mm}
        \begin{subfigure}[b]{0.28\textwidth}
            \begin{overpic}[width=\textwidth, 
 unit=1mm, tics=5]{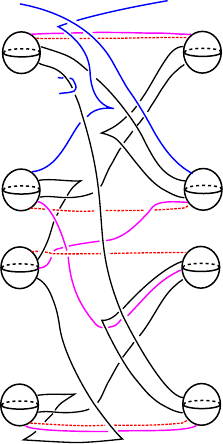}
 \end{overpic}
            \caption{}
            \label{b2noattach}
        \end{subfigure}\hspace{10mm}
        \begin{subfigure}[b]{0.28\textwidth}
            \begin{overpic}[width=\textwidth, 
 unit=1mm, tics=5]{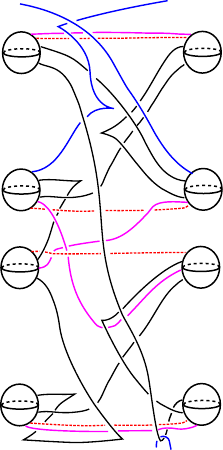}
 \end{overpic}
            \caption{}
            \label{fig:b2attach}
        \end{subfigure}\hspace{10mm}
        \begin{subfigure}[b]{0.2\textwidth}
            \begin{overpic}[width=\textwidth, 
 unit=1mm, tics=5]{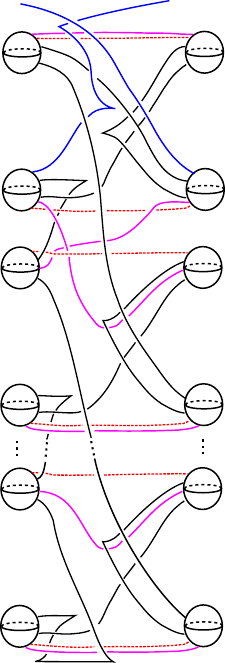}
 \end{overpic}
            \caption{}
\label{fig:steinBn}
        \end{subfigure}
       \caption{(a)-(e): Stein realizing $B(2,\cdot)$ continued. Note (b) to (c) does a $2\pi$ twist of the top blue curve. (f): Stein realizing $B(n,\cdot)$. Pink meridians to 1-handles are attaching circles for Casson handles.
       }
        \label{fig:steinb2part2}
    \end{figure}

Recall the Stein adjunction inequality of Lisca-Mati\'c \cite{liscamatic}, which also appears as Theorem 11.4.7 of Gompf-Stipsicz \cite{gompfstipsicz}:
\begin{theorem}[\cite{gompfstipsicz, liscamatic}]\label{thm:adjunction}
    Let $\Sigma$ be a connected orientable smooth embedded surface with $g(\Sigma)>0$ in a Stein surface $S$. Then
    \[2g(\Sigma) -2 \ge \Sigma\cdot \Sigma + |\langle c_1(S),[\Sigma]\rangle|\]
\end{theorem}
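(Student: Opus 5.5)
This is the Lisca--Mati\'c adjunction inequality, which the paper cites rather than proves, so I only outline the standard route. The plan has three steps: embed the Stein surface into a closed symplectic 4-manifold with large $b_2^+$, invoke the adjunction inequality there, and then pass the inequality back to $S$.

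\textbf{Step 1: reduce to a Stein domain.} The surface $\Sigma$ is compact, so it lies in a compact sublevel set $W$ of an exhausting strictly plurisubharmonic function. I will arrange $W$ to be a Stein domain, possibly after a small perturbation of the function. The restriction $c_1(S)|_W=c_1(W)$ and the self-intersection $\Sigma\cdot\Sigma$ are both unchanged, so it suffices to prove the inequality in $W$.

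\textbf{Step 2: embed $W$ in a closed symplectic manifold.} By Lisca--Mati\'c, which uses Eliashberg's Legendrian handle description together with Loi--Piergallini/Akbulut--Ozbagci, $W$ embeds holomorphically in a minimal complex surface $Z$ of general type. The resulting K\"ahler structure on $Z$ restricts, up to homotopy of almost complex structures, to the given one on $W$, so $c_1(Z)|_W=c_1(W)$. We also want $b_2^+(Z)>1$; this follows from the branched covering construction, which realizes $W$ inside a cover of a Lefschetz fibration.

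\textbf{Step 3: apply the adjunction inequality in $Z$.} For a closed symplectic $Z$ with $b_2^+>1$, Taubes' theorem says the canonical class $K=-c_1(Z)$ is a Seiberg--Witten basic class. The Seiberg--Witten adjunction inequality of Kronheimer--Mrowka and Morgan--Szab\'o--Taubes then gives, for any embedded $\Sigma$ with $g>0$,
\[2g-2\ge \Sigma\cdot\Sigma+|\langle K,[\Sigma]\rangle|.\]
This holds even when $\Sigma\cdot\Sigma<0$: for manifolds of simple type, which symplectic manifolds are, the strong form follows from Ozsv\'ath--Szab\'o's symplectic Thom conjecture argument. To obtain the absolute value we apply the inequality to both $\pm K$, which are both basic classes by charge conjugation symmetry. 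Since $c_1(Z)|_W=c_1(S)|_W$, the value $\langle c_1(Z),[\Sigma]\rangle$ equals $\langle c_1(S),[\Sigma]\rangle$, which is the stated inequality.

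\textbf{Main obstacle.} The hard step is Step 2: producing a closed embedding with $b_2^+>1$ while keeping control of $c_1$, which is exactly the content of Lisca--Mati\'c. The second delicate point is the $\Sigma\cdot\Sigma<0$ case in Step 3, which relies on the simple-type adjunction inequality.
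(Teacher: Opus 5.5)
Your outline is correct and follows the standard argument behind the result, which the paper cites without proof (Lisca--Mati\'c; Gompf--Stipsicz, Theorem 11.4.7): place $\Sigma$ in a Stein domain, embed that domain in a minimal surface of general type with $b_2^+>1$, and apply the Seiberg--Witten adjunction inequality, using simple type (Ozsv\'ath--Szab\'o) to handle $\Sigma\cdot\Sigma<0$. One attribution in Step 2 is wrong: Lisca--Mati\'c's embedding predates Loi--Piergallini and Akbulut--Ozbagci and comes from realizing the Stein domain as a domain in an affine algebraic surface and then taking a branched cover of its projective compactification, whereas the Lefschetz-fibration route you mention gives only a symplectic embedding into a closed manifold with $b_2^+>1$ (which would still suffice for Step 3, since Taubes supplies both the basic class and simple type).
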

Theorem~\ref{thm:adjunction} allows us to determine the slice genus of the attaching circle of $TH$, in $TH$.

\begin{proposition}\label{prop:notaplumbedhandle}
    The smooth slice genus of the attaching circle of $(TH,\partial_-TH)$ in $TH$ is 2. Thus $TH$ is an exotic open 2-handle, and $(Tk, \partial_- Tk)$ is not any plumbed 2-handle.
\end{proposition}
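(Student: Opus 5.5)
Our plan is to prove the two inequalities $g \le 2$ and $g \ge 2$ separately, where $g$ denotes the smooth slice genus of the attaching circle $C$ in $TH$.

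\textbf{Upper bound.} We work in the first stage $Tk$, which by Figure~\ref{fig:TH2} is a plumbed 2-handle with two positive double points union a 0-framed 2-handle along a curve pairing the two double point loops. The immersed core disk $D$ of $Tk$ has two double points. Resolving both double points by the usual tubing trick (remove small disks near each double point and add tubes) would produce a surface of genus 2 bounded by $C$. It lies in the interior of $Tk \subset TH$, apart from its boundary. So $g\le 2$.

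\textbf{Lower bound.} We embed $TH$ in a Stein surface and apply Theorem~\ref{thm:adjunction}. Take $S = B^4 \cup TH$ attached along an unknot with framing $k$; by the proof of Theorem~\ref{thm:stein} (Figure~\ref{fig:steinpiecepart2}), this is Stein for suitable $k$. Take $k$ as large as the Stein realization allows: the attaching 2-handle has $tb=3$ in Figure~\ref{fig:steinpiecepart2a}, so we use Stein framing $tb-1=2$ with rotation number $r$, which we expect to be $0$ by symmetry.

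Suppose $C$ bounds a genus $g$ surface $F$ in $TH$. Cap $F$ with the core of the $B^4$ side, namely a disk bounded by the unknot. The result is a closed surface $\Sigma$ of genus $g$ in $S$ representing the generator of $H_2(S)$, with $\Sigma\cdot\Sigma = k = 2$ and $\langle c_1,[\Sigma]\rangle = r = 0$. If $g>0$, the adjunction inequality gives $2g-2 \ge 2$, so $g\ge 2$. If $g=0$, then $\Sigma$ is a sphere of positive square in a Stein surface, and Stein surfaces admit no such embedded sphere (by the Lisca–Matić embedding into a minimal Kähler surface of general type). Alternatively, we can stabilize to reduce to the $g>0$ case, since adding a trivial handle does not change square or $c_1$: then $0\ge 2$, a contradiction. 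Either way, $g=2$.

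\textbf{Consequences.} A standard open 2-handle has slice genus $0$ for its attaching circle. Since $TH$ is homeomorphic to an open 2-handle by Proposition~\ref{prop:THhomeo}, it is an exotic open 2-handle.

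If $(Tk,\partial_-Tk)$ were diffeomorphic rel attaching region to a plumbed 2-handle $k$, we argue as follows. The tip circle of $Tk$ corresponds to a double point loop of $k$, so $k$ would have one double point, say. Then $TH$ would contain a smoothly embedded disk with the right framing in the first two stages, obtained by capping: in $TH$, attaching the next stage and pushing, or directly comparing the framing of the immersed core. More simply, the framing on $C$ induced by a plumbed handle's core differs from the attaching framing by $2\times$(signed double points). Here the core has self-intersection framing discrepancy $4$, forcing two positive double points. We then compare with the adjunction bound applied to $B^4\cup k$ with $k$ Stein.

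The cleanest route may instead be to compare $H_1$ and Stein data: $Tk$ is $S^1\times B^3$, so $k$ would have one double point. In that case the core resolves to a genus $1$ surface, whose framing discrepancy is $\pm2$, not $4$. This contradicts the framing data recorded for $Tk$ (blackboard versus attaching framing differing by four twists).

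The main obstacle is the precise Stein bookkeeping: confirming the framing $k$ and the rotation number of the capped class so that the adjunction inequality yields exactly $g\ge 2$. The obstacle also includes handling the $g=0$ case rigorously.
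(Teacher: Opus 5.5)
Your proof that the slice genus is $2$, and hence that $TH$ is exotic, is correct and is essentially the paper's argument. The paper attaches $TH$ to $B^4$ with framing $0$ after homogeneously stabilizing the $tb=3$ attaching Legendrian twice, and gets $2g-2\ge 0+2$. You use the unstabilized Stein framing $2$ and get $2g-2\ge 2+|r|\ge 2$. These are the same bound, since $tb-1+|r|$ is unchanged by homogeneous stabilization. Your version does not need the value of $r$ at all, so that part of your stated obstacle disappears. Your handling of $g=0$ and the genus-$2$ resolution of $D$ are fine.

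The gap is the last assertion, that $(Tk,\partial_-Tk)$ is not a plumbed 2-handle. Neither of your framing arguments gives a contradiction.

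\begin{itemize}
\item The four-twist discrepancy between the blackboard and attaching framings is a property of the particular immersed disk $D\subset Tk$, not an invariant of $Tk$. Suppose $\phi\colon Tk\to k$ were a diffeomorphism rel attaching region onto a plumbed handle with one double point. Then $\phi(D)$ would just be some immersed disk in $k$ with discrepancy $4$. Adding local kinks to the core of $k$ realizes every even discrepancy, so nothing conflicts with the core's discrepancy $\pm2$.
\item Only the part of a double-point count carrying nontrivial group elements, as in Wall's reduced $\mu$, is a homotopy invariant rel boundary. The framing discrepancy lives in the trivial-element part, which is exactly what kinks change.
\item An embedded genus-$1$ surface in $k$ bounded by the attaching circle induces the attaching framing, not a discrepancy of $\pm 2$. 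Capping it off gives a class whose square is fixed by the intersection form.
\item Adjunction in $B^4\cup k$ bounds the genus in $k$ from below, which is the wrong direction here.
\item Nothing supports your claim that $TH$ would then contain a smooth embedded disk in two stages.
\end{itemize}

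The fix uses the surface you already wrote down. Once $\pi_1(Tk)\cong\bbZ$ forces $k$ to have one double point, resolving it gives an embedded genus-$1$ surface in $k$ bounded by the attaching circle. Its preimage under $\phi$ is an embedded genus-$1$ surface in $Tk\subset TH$ bounded by $C$, contradicting the slice genus $2$ you just proved. This is how the paper argues.
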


\begin{proof}
Let $S$ be obtained by attaching $TH$ to $B^4$ along a 0-framed unknot. After (homogeneously) stablizing the attaching Legendrian in Figure \ref{fig:steinpiecepart2a} twice to drop its $tb$ to 1, we see $S$ admits a Stein structure. Let $\Sigma$ be the smooth surface obtained by capping a Seifert surface for the attaching Legendrian with a core of the 2-handle generating $H_2(S)$. Then $[\Sigma]$ generates $H_2(S)$, $\Sigma\cdot \Sigma =0$, and by considering rotation number in a standard form handlebody we have $|\langle c_1(S),[\Sigma] \rangle| =2$. By the adjunction inequality above it follows that for any connected smooth embedded orientable surface $F$ generating $H_2(S)$, $g(F)\ge 2$, and the obvious Seifert surface shows the minimal genus equals 2. By considering connected, compact, oriented surfaces with circle boundary $(\Sigma,\partial \Sigma = S^1)\hookrightarrow (TH,\partial_-TH)$, union a fixed disk in $B^4$ bounded by the unknot, it follows that the genus of the attaching circle in $TH$ is 2.

If $Tk$ was a plumbed 2-handle, then it must have exactly one double point by considering the fundamental group. In the plumbed 2-handle, the minimal genus of a smoothly embedded surface bounded by this circle is 1 (apply the Stein adjunction inequality to the plumbed handle attached to $B^4$ along a 0-framed unknot). 
\end{proof}

\begin{remark} We determined, using KnotJob \cite{knotjob}, that the generalization of Rasmussen's $s$-invariant due to Manolescu, Marengon, Sarkar, and Willis \cite{ManolescuMarengon} provides a lower bound of 1 on the desired genus in $Tk$. 
    \end{remark}

\begin{conjecture}
    $TH$ is not a Casson handle.
\end{conjecture}

Necessarily, it would have to be a Stein, slice genus 2 Casson handle. 
Observe $TH$ is a union of subsets with fundamental group $\bbZ$ relative to their attaching region; it seems such an exhaustion should force $TH$ to be a linear chain Casson handle, which would then have slice genus at most 1. However it is unknown whether a Casson handle can arise from different signed trees. The end space of a tree used to build a Casson handle is not known to be an invariant of the Casson handle: the complication arises since when the upper boundary is deleted, all ends are collapsed to a point.

\begin{question}[See also Gompf, {\cite[Remark after Prop. 5.1]{gompfproper}}]
    Suppose $CH$ is a Casson handle formed from a given signed tree, i.e. by stacking self-plumbed 2-handles according to the tree, then deleting all boundary except the open attaching region. Can $CH$ arise from a different signed tree?
\end{question}

In the following, a \textit{small exotic plane} is one whose double branched cover is a small exotic $\R^4$, i.e. one whose compact codimension-0 submanifolds embed in $\R^4$. For background on end-sums, see \cite{gompfinfinite, gompfproper}.

\begin{theorem}\label{thm:exoticmobius}
    Let $m\ge 2$ be even. The manifolds $\A_m$ and $\M_m$ are infinite families of exotic smoothings of $S^2\times \R^2$ and $\overline{\bbC P^2}\setminus pt$ respectively, distinguished by the minimal genus of a smooth surface generating $H_2$. Thus, their branch sets are infinite families of topologically but not smoothly isotopic proper open annuli and M\"obius strips in $\R^4$. Moreover, the branch sets are not end sums of smoothly standard surfaces with \textbf{any} exotic planes, and remain smoothly nonisotopic when end-summed with any \textbf{small} exotic planes. Letting $A_n$ denote the annuli, $A_n\natural P$ is not isotopic to $A_k$ for any $n,k$ and \textbf{any} exotic plane $P$.
\end{theorem}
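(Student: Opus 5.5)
Proposition~\ref{prop:mobius} already gives the homeomorphisms and the topologically standard branch sets. So the plan reduces to computing the minimal genus $g_{\min}$ of a smooth surface generating $H_2\cong\bbZ$, and then showing that this number survives end-summing.

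\textbf{Lower bound.} Apply Theorem~\ref{thm:adjunction} to the Stein structures of Theorem~\ref{thm:chern}. For $\A_m$ we have $\Sigma\cdot\Sigma=0$, which gives $2g-2\ge 2m-2$, so $g\ge m$. For $\M_m$ we have $\Sigma\cdot\Sigma=-1$, which gives $2g-2\ge -1+2m-1$, so again $g\ge m$.

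\textbf{Upper bound.} Use the obvious surface: the Seifert surface of the unknot capped by the core of $\Sigma_m\times D^2$ in the first block $B(m,CH_+)$. The boundary circle of $\Sigma_m$ bounds a genus $m$ surface there, and tubing or absorbing the single triangular 2-handle should not add genus. Alternatively, if needed, I would find a genus-$m$ surface in the first stage directly from the Kirby diagram of Figure~\ref{fig:bHs}, where the attaching circle bounds $\Sigma_m$ pushed slightly. Hence $g_{\min}=m$. This distinguishes all $\A_m$ from each other and all $\M_m$ from each other, so the manifolds are pairwise non-diffeomorphic. Since a smooth isotopy of branch sets lifts to a diffeomorphism of double branched covers, the branch sets are not smoothly isotopic. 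Exoticness follows because standard $S^2\times\R^2$ and $\overline{\bbC P^2}\setminus pt$ have $g_{\min}=0$.

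\textbf{End sums.} End-summing a surface with a plane $P$ corresponds, on double branched covers, to end-summing with the cover $\widetilde P$ of $P$, which is a (possibly exotic) $\R^4$.
\begin{itemize}
\item \emph{Irreducibility.} Suppose a branch set were $F\natural P$ with $F$ smoothly standard. Then its cover would be the standard $S^2\times\R^2$ (or $\overline{\bbC P^2}\setminus pt$) end-summed with $\widetilde P$. There the generator is represented by a sphere lying in the standard summand, which gives $g_{\min}=0\neq m$.
\item \emph{Small planes.} For small $P$, I would show $g_{\min}(\A_m\natural\widetilde P)=m$. Any compact surface lies in a compact subset $C$ of $\A_m\natural\widetilde P$. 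Since $\widetilde P$ is small, its compact part embeds in $\R^4$, and hence in $\R^4\subset B^4$-neighborhoods. Re-embedding that part, $C$ embeds in $\A_m$ itself (end sum with $\R^4$ is trivial), preserving homology class, and the bound $g\ge m$ persists. The upper bound is unaffected. So distinct $m$ remain distinct after summing with any small planes.
\item \emph{Arbitrary $P$ for annuli.} $A_n\natural P$ has cover $\A_n\natural\widetilde P$, which contains $\A_n$ minus a half-space as a smooth submanifold containing a genus-$n$ generator. Hence $g_{\min}(\A_n\natural\widetilde P)\le n$, while $g_{\min}(\A_k)=k$. The case $k\le n$ needs more care. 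I would instead use the Stein structure: the obstacle is that $\widetilde P$ might not be Stein. I would try proving $g_{\min}\le n$ together with a reverse embedding argument, or exploit that $\A_k$ is Stein, so that $g_{\min}(\A_k)=k$ is realized rigidly. If $A_n\natural P\cong A_k$, then $\A_k\cong\A_n\natural\widetilde P$. The surface realizing genus $n$ gives $k\le n$. Conversely, $\A_n$ minus the end-sum region embeds in $\A_k$, so its compact pieces inherit the adjunction bound from the Stein $\A_k$ via a Stein neighborhood argument. I would need to check this step carefully to get $n\le k$; it is the step I expect to be the main obstacle.
\end{itemize}
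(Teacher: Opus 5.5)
\textbf{Overall.} Your genus computation matches the paper: adjunction with Theorem~\ref{thm:chern} gives $g\ge m$, and the core $\Sigma_m\times 0$ of the first surface block, capped by a disk in $B^4$, gives $g\le m$. Exoticness and irreducibility also match the paper. There is, however, a genuine gap in the final claim about $A_n\natural P$ for arbitrary $P$.

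\textbf{The small-plane step.} You have the right mechanism: smallness lets you replace the $\widetilde P$-part of a surface by a piece of standard $\R^4$, giving a map into $\A_m\natural\R^4=\A_m$ that is the identity on $H_2$. But ``re-embedding that part'' glosses over one point. That part is glued to $\A_m$ across the end-summing $\R^3$, so the embedding into $\R^4$ must be compatible with the end sum. The paper arranges this as follows. It encloses the $\widetilde P$-part of the surface $S$ in a compact topological ball $B\subset\widetilde P$. It uses Quinn's theorem to make $\partial B$ smooth near a point where $S$ crosses. It then shrinks and isotopes the end-sum tube so that it enters $B$ along a collar of that smooth patch. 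Then $S\subset\A_m\natural\operatorname{int}B$, and $\operatorname{int}B$ embeds in $\R^4$.

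\textbf{The gap.} Your two inequalities point the same way. The genus-$n$ surface in $\A_n\natural\widetilde P\cong\A_k$ gives $k\le n$. Pulling back the adjunction bound of the Stein $\A_k$ along $\A_n\setminus(\text{half-space})\hookrightarrow\A_k$ says every generator there has genus $\ge k$; since one has genus $n$, this again gives $k\le n$.

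To get $n\le k$ you would need the lower bound $g\ge n$ to survive end-summing with $\widetilde P$. For large $\widetilde P$ this cannot be expected: as the remark after the theorem notes, end-summing with a large exotic $\R^4$ can lower the genus function. For example, $\M_2$ might be $\M_m\natural R$ with $R$ large. So no genus-function or Stein argument can close this case.

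\textbf{The missing idea: an embedding obstruction.} For small $P$, your previous step already separates $n\neq k$. For large $P$, argue as follows.
\begin{itemize}
\item By Theorem~\ref{thm:abstractcasson}(1), $BH_n$ embeds in the standard $2$-handle rel attaching region. Hence $\A_n$ embeds smoothly in $S^2\times\R^2$, and therefore in $\R^4$.
\item $\widetilde P$ contains a compact submanifold that does not embed in $\R^4$. Choose the end-summing ray to avoid it. That submanifold then lies in $\A_n\natural\widetilde P$, so $\A_n\natural\widetilde P$ does not embed in $\R^4$.
\item Hence $\A_n\natural\widetilde P$ is not diffeomorphic to any $\A_k$, including $k=n$.
\end{itemize}
This also explains why the last claim is stated only for annuli. $\overline{\bbC P^2}\setminus pt$ contains a class of square $-1$, so it does not embed in $\R^4$, and the obstruction is unavailable for the $\M_m$.
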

\begin{proof}
    We proved the homeomorphisms and that the branch sets are topologically standard in Proposition~\ref{prop:mobius}. By Theorems~\ref{thm:chern} and \ref{thm:adjunction}
    we find
    \[2g(\Sigma(\A_m)) \ge 0 + 2m -2 + 2 = 2m\]
    for any smooth surface  $\Sigma(\A_m)$ generating $H_2(\A_m)$, and
    \[2g(\Sigma(\M_m)) \ge -1 + 2m - 1 + 2 = 2m\]
    for any smooth surface  $\Sigma(\M_m)$ generating $H_2(\M_m)$. By taking the obvious Seifert surfaces we conclude the minimal genus of a smooth generator of $H_2(\M_m)$ or $H_2(\A_m)$ is $m$.
    This establishes exoticness of the branched covers, thus the branch sets are not smoothly properly isotopic. Note that if the surfaces were end sums of standard ones with exotic planes, their branched double covers would be the standard $S^2\times \R^2$ or $\overline{\bbC P^2}\setminus pt$ end-summed with exotic $\R^4$'s (possibly even the standard $\R^4$), and thus have $H_2$ generated by a smooth sphere. 
    
    To see that our surfaces remain smoothly nonisotopic after end-summing with small exotic planes, note that end-summing the surfaces and then taking double branched covers yields $\A_m \natural R$ and $\M_m\natural R$, where $R$ is a small exotic $\R^4$. It suffices to show that end-summing with small exotic $\R^4$'s does not change the genus function: it is clear that end-summing with an $\R^4$ cannot increase the genus function, so suppose it decreases the genus of some connected compact smooth orientable surface $S$ representing a homology class. Recall that to end-sum two manifolds together, open regular neighborhoods of proper rays are deleted, and the resulting two $\R^3$ boundary components are identified: we call the image of this $\R^3$ under the identification the \textit{end-summing $\R^3$}. Let $U\cong \R^3\times \R$ be a tubular neighborhood of the end-summing $\R^3$ in $A_m\natural R$, and let $\gamma = \{0\} \times\R \subset U$ be a transverse arc. We may assume $S\cap U$ lies in a tubular neighborhood $\nu(\gamma)$. The subset of $S\setminus U$ contained in $R$ is contained in a compact subset of $R$, hence in a compact topological 4-ball $B\subset R$. By the connectedness of $S$, we may assume $S\cap \nu(\gamma)\cap \partial B\ne \emptyset$. The interior $R'$ of $B$ is an exotic $\R^4$ which embeds in $\R^4$ standard (since $B$ embeds in some smooth compact submanifold of $R$, which all embed in $\R^4$ by assumption). By Quinn's stable homeomorphism theorem \cite{endsofmaps}, we may assume that $\partial B$ is smooth in a neighborhood $V \cong \R^3 \subset \partial B$ of some point $x \in \partial B \cap \nu(\gamma)$. By shrinking $\nu(\gamma)$ radially, we may assume $S\cap \partial B$ is contained in $V$. Since homotopy implies isotopy for curves in a 4-manifold, we may isotop $\nu(\gamma)$ so that $\nu(\gamma)\cap B = V \times [0,1)$, a collar of $V$ in $B$. Thus, $S$ is contained in $A_m\natural R'$, where the end-sum is performed along the shrunk $\nu(\gamma)$. Then since $R'\hookrightarrow \R^4$, there exists an embedding $A_m\natural R'\hookrightarrow A_m\natural \R^4 = A_m$ inducing the identity map on homology, successfully lowering the genus of the homology class represented by $S$, a contradiction.

    To verify the final claim about $A_m$, it remains to check the case where $P$ is a large exotic plane, i.e. has a large exotic $\R^4$ as branched double cover. Since $BH_m$ embeds in a 2-handle rel attaching region, $A_m$ embeds in $S^2\times \R^2$, hence in $\R^4$. But $A_m \natural R'$ for any large exotic $\R^4$ $R'$ does not embed in $\R^4$, and hence is not diffeomorphic to $A_n$, for any $m,n$. 
\end{proof}

\begin{remark}
   We thank Bob Gompf for pointing out that end-summing with small $\R^4$'s does not change the genus function (and for sketching the above argument for this), and that end-summing with large $\R^4$'s can decrease the genus function. In particular, it is possible that $\M_2$ is an end-sum of $\M_m$ with some large exotic $\R^4$, where $m >2$.
\end{remark}

\begin{proof}[Proof of Theorem~\ref{thm:mobius}]
    The theorem is an immediate consequence of Theorem~\ref{thm:exoticmobius}.
\end{proof}

\begin{remark}
  Gompf's exotic proper \textit{half-open} annuli \cite{gompfproper}  are clearly different than our annuli, which are open; his Theorem \cite[Theorem 4.2]{gompfproper} gives infinitely many exotic proper open annuli and M\"obius bands in $\R^4$ whose branched covers are all end sums of the standard cover with exotic $\R^4$'s, hence, have second homology generated by spheres. Thus, our surfaces are distinct from Gompf's.
\end{remark}

\section{End Floer homology and exotic planes}\label{sec:exotic} Theorem \ref{thm:exotic} of this section shows non-vanishing of end Floer homology under a symplectic end, and stability under blowups. We will use this to distinguish exotic planes by showing their double branched covers are distinct exotic $\bbR^4$'s. The proof relies on the nonvanishing of the Heegaard Floer mixed invariant for symplectic manifolds, and is similar to the argument of Gadgil \cite[Theorem 1.6]{gadgil}. In \cite{gadgil}, Gadgil uses Heegaard Floer homology with twisted coefficients, but we are able to simplify the proof in our case by a grading argument.

\subsection{Tools and background}

We recall some properties of Heegaard Floer homology. Throughout, we work over $\mathbb{F} = \bbZ/2\bbZ$. To a closed oriented 3-manifold $Y$ with a spin$^c$-structure $\s$ one can associate the Heegaard Floer homology groups $\HFm(Y,\s)$, $\HFinf(Y,\s)$, and $\HFp(Y,\s)$,
 which are modules over $\ff[U]$, $\ff[U,U^{-1}]$, and $\ff[U,U^{-1}]/U\cdot \ff[U]$ respectively. These modules fit into a $U$-equivariant long exact sequence \cite{OSfourmanifolds}:
\[\hdots \xrightarrow{} \HFm(Y,\s) \xrightarrow{\iota} \HFinf(Y,\s),\xrightarrow{\pi} \HFp(Y,\s) \xrightarrow{\partial} \hdots\] The most important of these groups for us will be $\HFred$, defined as the $U$-torsion submodule of $\HFm(Y,\s)$, or equivalently using the coboundary map above,
\[\HFred^+(Y,\s) = \coker(\pi)\cong \ker(\iota) = \HFred^-(Y,\s).\]
Spin$^c$ cobordisms $(W,\s):(Y_1,\s|_{Y_1})\to (Y_2,\s|_{Y_2})$ induce homomorphisms of all flavors of Floer homology:

\[F^\circ_{W,\s} : \HF^\circ(Y_1,\s|_{Y_1})\to \HF^\circ(Y_2,\s|_{Y_2}),\]
where $\circ = +, -, \infty$. In general, cobordism maps obey a composition law:
\[F^\circ_{W_2,\s_2} \circ F^\circ_{W_1,\s_1} = \sum_{\s:\,\,\s|_{W_1} = \s_1, \s|_{W_2} = \s_2} F^\circ_{W_2\circ W_1, \s}\]

Recall the definition of the Heegaard Floer 4-manifold invariant \cite{OSfourmanifolds}, which is well-defined over $\mathbb{F}$. Cobordisms with $b_2^+ >1$ induce the zero map on $\HFinf$. Given a smooth closed spin$^c$ 4-manifold $X$ with $b_2^+(X) >1$ there exists a connected, separating 3-manifold $P\subset X$ such that $\delta H^1(P)=0$ (where $\delta$ is the Mayer Vietoris coboundary map) such that both components $U$ and $V$ of $X\setminus P$ have $b^2_+ \ge 1$. The manifold $P$ is called an admissible cut and one always exists. Deleting a 4-ball from each of $U$ and $V$, we obtain maps,
\[\begin{tikzcd}
	&& \HFm(P,\s|_P) && \HFp(P,\s|_P) && \\
	\HFm(S^3,\s_0) && \HFred^-(P,\s|_P) && \HFred^+(P,\s|_P) && \HFp(S^3,\s_0)
	\arrow["\text{quotient}"', from=1-5, to=2-5]
	\arrow["{F^+_U}", from=1-5, to=2-7]
	\arrow["{F_V^-}", from=2-1, to=1-3]
	\arrow["{F^-_{V}}"', from=2-1, to=2-3]
	\arrow["\text{inclusion}"', from=2-3, to=1-3]
	\arrow["\partial^{-1}", "\cong"', from=2-3, to=2-5]
	\arrow["\text{induced}"', from=2-5, to=2-7]
\end{tikzcd}\]
where the induced maps follow from the corresponding $\HFinf$ maps vanishing. The composition of the induced maps with $\partial^{-1}$ is called the mixed map $ F^{mix}_{X,\s}$. 
\begin{definition}[Ozsv\'ath-Szab\'o \cite{OSfourmanifolds}]
    The Heegaard Floer 4-manifold invariant of $X$ with spin$^c$ structure $\s$, $\Phi_{X,\s}$, is defined as the image of a top graded generator $\Theta_-$ of $\HFm(S^3,\s_0)\cong \ff[U]$ under $ F^{mix}_{X,\s}$:
\[\Phi_{X,\s} := F^{mix}_{X,\s}(\Theta_-) \in \HFp(S^3,\s_0).\]
\end{definition}

We recall facts about end Floer homology \cite{elihomlidman, gadgil}. Any noncompact 4-manifold $X$ can be decomposed as a compact codimension-0 submanifold union infinitely many compact cobordisms. One can study the end by taking the corresponding direct limit of Heegaard Floer homology. However, composite cobordism maps are generally sums over spin$^c$ structures, and in a direct system this sum must have only one term. This is achieved by restricting to \textit{admissible cobordisms}. Also, since the direct limit only sees 3-manifolds running off the end of $X$, we may work with any spin$^c$ structure only defined in a neighborhood of the end of $X$.

\begin{definition}[Gadgil, Definition 1.1 \cite{gadgil}]\label{def:admissible}
    A cobordism $W_{ij} : Y_i \to Y_j$ is \textit{admissible} if the inclusion map $H^1(W_{ij})\to H^1(Y_j)$ is surjective. A smooth 4-manifold $X$ with one end is \textit{admissible} if it admits a compact exhaustion $C_i \subset C_{i+1}$  where for all $i < j$ the cobordisms $W_{ij} = C_j \setminus C_i$ are admissible, and the exhaustion is called an \textit{admissible exhaustion}. 
\end{definition}

\begin{definition}[Gadgil, Definition 1.3 \cite{gadgil}]
    An \textit{asymptotic spin$^c$ structure} on $X$ is a spin$^c$ structure defined in the complement of a compact set. Two are the same if they agree on the complement of a compact set.
\end{definition}

Gadgil proves the direct limit is a well defined spin$^c$ diffeomorphism invariant of the end of a manifold. 

\begin{theorem}[Gadgil \cite{gadgil}, Theorem 1.4, Proposition 1.5]
    Let $\{C_i\}$ be a compact, admissible exhaustion of a smooth 4-manifold $X$ with one end, and let $Y_i = \partial C_i$. Let $\s$ be an asymptotic spin$^c$ structure on $X$. The end Floer homology $\HE(X,\s) := \varinjlim HF_{red}(Y_i,\s|_{Y_i})$ is a well defined spin$^c$-diffeomorphism invariant of the end of $X$, independent of the choice of admissible exhaustion. Moreover, $HE(\R^4,\s)=0$ for any choice of $\s$.
\end{theorem}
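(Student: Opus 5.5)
This statement is quoted from Gadgil, so the plan is to reconstruct his argument rather than to find a new one. It has three parts: (i) the direct system is defined, i.e. the composite maps are single-term; (ii) the limit is independent of the admissible exhaustion and of the representative of the asymptotic spin$^c$ structure; (iii) the computation for $\R^4$.

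\textbf{Defining the direct system.} For $i<j$, let $W_{ij}=C_j\setminus\mathrm{int}\,C_i$, equipped with the restriction of $\s$; we may take $i$ large enough that $\s$ is defined on all of $W_{ij}$. Each cobordism map $F_{W_{ij},\s}$ sends $U$-torsion to $U$-torsion, so it restricts to $\HF_{\mathrm{red}}$. For the composition law to apply without extra terms, we must show that at most one spin$^c$ structure on $W_{ik}=W_{ij}\cup_{Y_j}W_{jk}$ restricts to the given structures on $W_{ij}$ and $W_{jk}$. Two such structures differ by an element of $H^2(W_{ik})$ that vanishes on both pieces. By Mayer--Vietoris, such an element lies in the image of $\delta\colon H^1(Y_j)\to H^2(W_{ik})$. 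Admissibility says that $H^1(W_{ij})\to H^1(Y_j)$ is surjective, so $\delta=0$ and the extension is unique. Hence $F_{W_{jk}}\circ F_{W_{ij}}=F_{W_{ik}}$, and $\HE(X,\s)=\varinjlim \HF_{\mathrm{red}}(Y_i,\s|_{Y_i})$ is well defined.

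\textbf{Independence of choices.} Suppose $\{C_i\}$ and $\{C'_i\}$ are two admissible exhaustions. Pass to interleaved subsequences $C_{i_1}\subset C'_{j_1}\subset C_{i_2}\subset\cdots$. The key claim is that the interleaved cobordisms are again admissible. Take a class in $H^1(Y'_j)$. Since $C'_{j}\setminus C'_{j_0}$ is admissible for some earlier $C'_{j_0}$ inside $C_{i}$, the class extends over $C'_j\setminus C'_{j_0}$, and therefore restricts to the piece $C'_j\setminus C_i$. This gives surjectivity of $H^1(C'_j\setminus C_i)\to H^1(Y'_j)$, and the same argument works with the roles of the two exhaustions swapped.

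By uniqueness of the spin$^c$ extension and the composition law, the interleaved maps commute with both original systems. Cofinality then identifies both direct limits with the limit of the interleaved system. Diffeomorphism invariance of the end follows by transporting exhaustions along a diffeomorphism of end neighborhoods. Changing $\s$ on a compact set only changes finitely many terms, which does not affect the direct limit.

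\textbf{Vanishing for $\R^4$.} Use the exhaustion of $\R^4$ by round balls. Each $Y_i=S^3$, and $H^1(S^3)=0$, so admissibility is automatic. Since $\HF_{\mathrm{red}}(S^3)=0$, the limit is $0$ for every $\s$; in fact $\s$ is unique, because $H^2$ of any end neighborhood vanishes.

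\textbf{Main obstacle.} I expect the hardest step to be the independence argument, specifically checking that the interleaved cobordisms are admissible. I would first try to prove that admissibility is inherited by sub-cobordisms that share the same outgoing boundary; once that holds, the cofinality argument is formal.
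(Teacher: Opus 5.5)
The paper does not prove this statement; it quotes it from Gadgil. Your reconstruction is correct and is the standard argument. Admissibility kills $\delta\colon H^1(Y_j)\to H^2(W_{ik})$, so the composition law has a single term. Admissibility passes to sub-cobordisms with the same outgoing boundary by your restriction argument; begin the interleaving with some $C'_{j_0}\subset C_{i_1}$ so that this always applies. Cofinality and the ball exhaustion of $\R^4$ then finish.

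The only slip is the claim that every end neighborhood of $\R^4$ has $H^2=0$: Alexander duality gives $H^2(\R^4\setminus T^2)\cong\mathbb{Z}^2$. Uniqueness of the asymptotic structure should instead be phrased via complements of round balls. The vanishing of $\HE(\R^4,\s)$ does not depend on this anyway, since every term is $\HFred(S^3)=0$.
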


We need the following lemma about symplectic manifolds, which was discussed in Section 2.1 of \cite{OSgenus}; we provide a proof that is likely well-known to experts. Suppose $M$ is a closed 3-manifold $M$ admitting a taut foliation $\cF = \ker \alpha$, for some $\alpha \in \Omega^1(M)$. By tautness, there is a closed 2-form $\omega\in \Omega^2(M)$ with $\omega|_\cF >0$. Then $\tilde\omega = p^*\omega + \ve d(t\alpha)$ is a symplectic form on $M\times I$, where $\ve>0$ is small and $p:M\times I\to M$ is projection \cite{confoliations}. We refer to any such $\tilde\omega$ as a symplectic structure on $M\times I$ corresponding to the foliation $\cF$ on $M$.

\begin{lemma}\label{lem:symplectic}
    Let $M$ be a closed 3-manifold admitting a taut foliation $\cF$, and let $\Sigma$ be a compact leaf with genus at least 1. Give $M\times I$ a symplectic structure corresponding to the foliation, and suppose $M\times I$ embeds symplectically in a closed symplectic 4-manifold $(Z,\omega)$. Then
    $\langle c_1(\omega),i_*[\Sigma]\rangle = \chi(\Sigma).$
\end{lemma}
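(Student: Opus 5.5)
Since $c_1(\omega)$ depends only on the homotopy class of $\omega$-compatible almost complex structures, and $i_*[\Sigma]$ is supported in $M\times I$, the pairing $\langle c_1(\omega), i_*[\Sigma]\rangle$ can be computed using only $\tilde\omega$ on $M\times I$. By naturality of Chern classes under the symplectic embedding, it equals $\langle c_1(M\times I,\tilde\omega), [\Sigma\times\{t_0\}]\rangle$. So the plan is to compute the first Chern class of $(T(M\times I),\tilde\omega)$ restricted to $\Sigma\times\{t_0\}$ directly.

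The first step is to choose a convenient compatible almost complex structure. Pick a vector field $R$ on $M$ transverse to $\cF$ with $\alpha(R)=1$; this is possible since $\cF=\ker\alpha$ is cooriented. Then split
\[T(M\times I)=\cF\oplus\langle R,\partial_t\rangle .\]
Define $J$ so that it is a complex structure on $\cF$ compatible with $\omega|_\cF>0$, and so that $J\partial_t=R$ on the trivial real 2-plane $\langle R,\partial_t\rangle$. I would check that $J$ is tamed by $\tilde\omega=p^*\omega+\ve\, dt\wedge\alpha+\ve t\, d\alpha$ when $\ve$ is small.
\begin{itemize}
\item On $\cF$, the form $p^*\omega$ dominates, and $\omega|_\cF>0$.
\item On $\langle\partial_t,R\rangle$, we have $\tilde\omega(\partial_t,R)=\ve+\omega(\cdot,R)$-type terms. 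The cross terms do not vanish.
\end{itemize}
This taming check is the main obstacle. The fix I would try first is to deform $\omega$ in its cohomology class near the leaf (or reparametrize), or to argue by convexity. The set of $J$ tamed by $\tilde\omega$ is contractible, and $c_1$ is unchanged along a path of nondegenerate forms. So it suffices to work with the path $p^*\omega+s\,dt\wedge\alpha+\ve t\,d\alpha$ for $s$ large relative to the cross terms. This path stays nondegenerate because $(p^*\omega+s\,dt\wedge\alpha)^2=2s\,dt\wedge\alpha\wedge\omega>0$ plus small error. For large $s$, the taming of the split $J$ is clear.

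With such a $J$, the complex bundle is
\[T(M\times I)\cong(\cF,J)\oplus\underline{\bbC},\]
so $c_1=c_1(\cF,J)=e(\cF)$, the Euler class of the oriented plane field. Now restrict to $\Sigma\times\{t_0\}$. Since $\Sigma$ is a leaf, $\cF|_\Sigma=T\Sigma$, and its orientation as a leaf agrees with the symplectic orientation because $\omega|_\cF>0$ and $\Sigma$ is oriented so that $\int_\Sigma\omega>0$. Hence
\[\langle c_1,[\Sigma]\rangle=\langle e(T\Sigma),[\Sigma]\rangle=\chi(\Sigma).\]
The genus hypothesis is not needed for this computation. It only guarantees that $\Sigma$ is a homologically essential leaf, which is consistent with tautness. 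Finally, a sign remark: with the orientation of $[\Sigma]$ fixed by $\omega$, this gives exactly the claimed equality rather than $-\chi(\Sigma)$.
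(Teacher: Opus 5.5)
Your proof is correct, but it makes the split almost complex structure tamed by a different mechanism than the paper.

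\emph{The paper's route.} It avoids your obstacle by a choice of transverse field. It takes $v$ spanning the kernel of the closed $2$-form $\omega$ on $M$, with $\alpha(v)=1$, so that $\omega(X,v)=0$ for $X\in T\Sigma$. The block-diagonal $J$ ($j$ on $T\Sigma$, $J\partial_t=v$) then meets only the $O(\ve)$ cross terms $\ve t\,d\alpha(X,v)$. So it is tamed by $\tilde\omega$ itself near $\Sigma$ for $\ve$ small, and the paper then extends it over $Z$.

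\emph{Where your obstruction really sits.} With an arbitrary transverse $R$, the problem is not $\tilde\omega(\partial_t,R)$, which equals $\ve$ exactly. It is the order-one cross terms $\tilde\omega(X,R)=\omega(X,R)+\ve t\,d\alpha(X,R)$ for $X\in\cF$, set against a normal block of size only $\ve$.

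\emph{Why your fix works.} You remove these terms by homotoping through $\Omega_s=p^*\omega+s\,dt\wedge\alpha+\ve t\,d\alpha$, and this is sound:
\begin{itemize}
\item $\Omega_s^2=2s\,p^*\omega\wedge dt\wedge\alpha$ exactly, so there is no ``small error''. The other terms are function multiples of $4$-forms pulled back from $M$, or of $\alpha\wedge d\alpha=0$. This is nonzero since $\omega|_\cF>0$.
\item Because $d\alpha|_\cF=0$ and $\Omega_s(X,\partial_t)=-s\,\alpha(X)=0$ for $X\in\cF$, the only cross terms are the $s$-independent $\cF$--$R$ ones.
\item For $w=X+aR+b\partial_t$ the normal block contributes $s(a^2+b^2)$. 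By compactness of $M\times I$ this dominates the cross terms once $s$ is large, which makes ``the taming is clear'' precise.
\item $\Omega_s$ need not be closed, since $c_1$ depends only on the fiberwise nondegenerate form up to homotopy.
\end{itemize}

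\emph{Trade-offs.} The paper's argument is shorter and local, and needs no homotopy-invariance input. Yours does not depend on the choice of transverse field, and it replaces the extension of $J$ over $Z$ by naturality of $c_1$ under pullback. It also gives the global identity $c_1(\tilde\omega)=p^*e(\cF)$ on $M\times I$, from which the statement for any compact leaf follows at once. As you note, the genus hypothesis plays no role, and the paper does not use it either. The speculative fixes you list first (deforming $\omega$ in its class, convexity) are unnecessary and can be dropped.
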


\begin{proof}
It suffices to show $Z$ admits a compatible almost complex structure $J$ such that $TZ|_\Sigma$ splits as a sum of the complex bundle $T\Sigma$ and the trivial complex line bundle. Then
$i^*c_1(TZ) = c_1(T\Sigma)$,
and we have
\[\langle c_1(\omega), i_*[\Sigma]\rangle = \langle i^* c_1(\omega), [\Sigma]\rangle = \langle c_1(T\Sigma), [\Sigma]\rangle = \chi(\Sigma). \]
Letting $\alpha$ and $\omega$ be as above, the corresponding symplectic structure on $M\times I$ is then
$\tilde\omega = p^*\omega + \ve d(t\alpha)$ for some $\ve>0$, where $p:M\times I\to M$ is projection. As $\omega|_\Sigma$ is an area form, we may choose a compatible complex structure $j$ on $T\Sigma$. Letting $v$ be the vector field on $M$ with $\iota_v\omega \equiv 0$ and $\alpha(v) \equiv 1$, we have
\[T_pZ = T_p\Sigma \oplus \text{span}\{v, \partial_t\}_p\]
for all $p$ in a neighborhood $W\cong \Sigma\times \bbR^2 \subset Z$ of $\Sigma$. Now define a bundle map $J: TZ|_W \to TZ|_W$ acting as $j$ on the $\Sigma$ directions, acting as
$J(v) = -\partial_t$ and $J(\partial_t) = v$ on the normal directions, and extending linearly.
We have $J^2 =- \text{id}$, and $J$ provides the desired complex bundle splitting along $\Sigma$. 
One may now check that for a vector field basis $v_i$ for $T_pZ$ consisting of $\partial_t$, $v$, and a basis for $T_p\Sigma$, we have $\tilde\omega(Jv_i, Jv_j) = \tilde\omega(v_i,v_j)$ and $\tilde\omega(v_i,Jv_j) >0$, for all $i,j$. Thus, these formulae hold for all vector fields on a neighborhood of $p$. 
Since we have a compatible almost complex structure defined on an open set, it extends to one on $Z$.
\end{proof}

We make the following definition, then prove an end Floer nonvanishing result, which is a modification of Gadgil's Theorem~1.6 in \cite{gadgil}.

\begin{definition}\label{def:steincassonhandle}
A \textit{Stein admissible generalized Casson Handle} is a generalized Casson handle such that each of its defining cobordisms $n_k$ admits a handle decomposition as Stein handle attachments to the lower boundary, and each cobordism $n_kn_{k+1}...n_\ell$ is admissible in the sense of Gadgil.
\end{definition}

\begin{theorem}\label{thm:exotic}
    Let $\cR$ be a smooth manifold made from a slice disk complement for a knot $K$ of genus $g\ge 2$, union a Stein admissible generalized Casson handle attached along the meridian of the disk with 0 framing, and deleting the remaining boundary. Then $\cR$ is an exotic $\R^4$, and $\cR\#_\infty \overline{\bbC P^2} $ is not diffeomorphic to $\R^4\#_\infty \overline{\bbC P^2} $.
\end{theorem}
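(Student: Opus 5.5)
First I check that $\cR$ is homeomorphic to $\R^4$. The slice disk complement has the homology of $S^1$, with $\pi_1$ normally generated by the meridian. Attaching the generalized Casson handle along the meridian kills the generator and gives a contractible open manifold. By Theorem~\ref{thm:open2handles} the handle is homeomorphic rel boundary to an open 2-handle, so $\cR$ is homeomorphic to the interior of the disk complement union a 2-handle along the meridian, which is $B^4$. Hence $\cR\cong_{top}\R^4$. Exoticness will then follow from the stronger blowup statement, once we know $\HE(\R^4\#_\infty\overline{\bbC P^2},\s)=0$: blowups at points lying in compact sets leave the end unchanged, so this end Floer homology equals $\HE(\R^4)=0$ for every asymptotic spin$^c$ structure. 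Likewise the end of $\cR\#_\infty\overline{\bbC P^2}$ is that of $\cR$, with the blowups absorbed into the compact pieces of an exhaustion. So it suffices to show $\HE(\cR,\s)\neq 0$ for some asymptotic $\s$.

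For the exhaustion I take $C_0$ to be the disk complement union the first-stage cobordisms, and $C_k=C_0\cup n_1\cdots n_k$, suitably thickened. Stein admissibility gives admissibility of every $W_{ij}$, so the direct system is well defined. Its bonding maps are induced by Stein cobordisms $W_{ij}:Y_i\to Y_j$. The plan is to exhibit, for each $i$, a class $x_i\in\HFred(Y_i,\s_i)$ with $F_{W_{ij}}(x_i)=x_j\neq 0$ for all $j>i$, in the canonical spin$^c$ structure of the Stein structure. I would obtain this class via a closed symplectic cap, following Gadgil. First, the disk complement union $\nu$(first-stage 2-handle core) sits inside the 0-surgery picture: the 3-manifold $S^3_0(K)$ bounds the disk complement union a 0-framed 2-handle along the meridian. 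Since $g(K)\ge2$, Gabai gives a taut foliation on $S^3_0(K)$ with the capped Seifert surface $\hat\Sigma$ (genus $g$) as a compact leaf. Then $S^3_0(K)\times I$ is symplectic by the construction before Lemma~\ref{lem:symplectic}. Each $Y_j$ is obtained from $S^3_0(K)$ by the Stein (Weinstein) handle attachments of $n_1\cdots n_j$ on the convex side, and Eliashberg/Eliashberg–Etnyre concave capping embeds the whole thing into a closed symplectic $Z_j$ with $b_2^+>1$. Here $Z_j$ decomposes along $Y_j$ into a piece $U_j$ containing $S^3_0(K)\times I$ and the cobordism, and a cap $V_j$.

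For the nonvanishing, Ozsváth–Szabó's theorem says that the mixed invariant $\Phi_{Z_j,\s_\omega}$ of the canonical class is nonzero. Using a cut at $Y_j$ (or at $S^3_0(K)$ and composing), the mixed map factors through $\HFred(Y_j)$, so the image $x_j$ of $\Theta_-$ under the $U_j$ side is nonzero in $\HFred(Y_j)$. The $x_j$ are compatible with the bonding maps by the composition law, where admissibility makes the sum over spin$^c$ structures a single term. Thus the limit is nonzero. To avoid twisted coefficients, I would use Lemma~\ref{lem:symplectic}: it gives $\langle c_1(\omega),[\hat\Sigma]\rangle=\chi(\hat\Sigma)=2-2g\neq0$, which pins down the spin$^c$ restriction on $S^3_0(K)$ as the nontorsion one with $c_1$ extremal. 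There $\HFinf$ contributions vanish in the relevant grading, so the cut is admissible and the map from $\HFm$ lands in $\HFred$ by a grading argument. Adjunction inequality considerations show only this spin$^c$ structure contributes.

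The main obstacle I expect is exactly this step. One must show the mixed-invariant nonvanishing in $Z_j$ actually factors through, and is detected by, $\HFred(Y_j)$ in the fixed restricted spin$^c$ structure. This requires checking that other spin$^c$ structures on $Z_j$ agreeing near the end cannot cancel the canonical contribution, and that $\HFinf$ terms die. I would first try the grading argument: the canonical class realizes the maximal grading shift allowed by adjunction along $\hat\Sigma$, while every other class with the same restriction to $Y_j$ differs by classes supported in the compact part. Finally, blowing up only changes $\s$ by $\pm E$ classes, which are not seen at the end, so the same nonzero $\HE$ persists for $\cR\#_\infty\overline{\bbC P^2}$. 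This distinguishes it from $\R^4\#_\infty\overline{\bbC P^2}$.
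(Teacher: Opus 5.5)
Your outline follows the paper's architecture: Gabai's foliation on $S^3_0(K)$, the symplectic collar, Stein cobordisms, closed caps $Z_j$, Ozsv\'ath--Szab\'o nonvanishing, and blowups placed in admissible pieces. However, the step you yourself flag as the main obstacle is left open, and the reasons you give for it do not work.

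\emph{The gap, if you pass through $S^3_0(K)$.} When the canonical contribution is transported through $S^3_0(K)$, as in the paper (whose cut $P$ sits in the fixed inner cap $X_1'$), the composition law there is not single-term. Gadgil admissibility of $W_{0j}$ only controls the outgoing end $Y_j$, and the meridian dies in the handle. So one gets $\sum_{n\in\bbZ}\Phi_{Z_j,\t+n\delta(1)}$ with $\delta(1)=\mathrm{PD}(i_*[\hat\Sigma])$.
\begin{itemize}
\item These spin$^c$ structures all have the same restrictions to $X_1'$, to $S^3_0(K)$, to $W_{0j}$ and to the outer cap. So no restriction or adjunction argument on $S^3_0(K)$ can single out $\t$.
\item Admissibility of a cut is the cohomological condition $\delta H^1=0$, not a statement about $\HFinf$. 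Landing in $\HFred$ is automatic from $b_2^+\ge 1$.
\end{itemize}
The missing idea is the paper's explicit computation. We have $\delta(1)^2=0$, and by Lemma~\ref{lem:symplectic}, $\langle c_1(\t),[\hat\Sigma]\rangle=\chi(\hat\Sigma)=2-2g$. Hence $c_1(\t+n\delta(1))^2=c_1(\t)^2+8n(1-g)$. For $g\ge 2$ these values are pairwise distinct, so the terms lie in distinct degrees of $\HFp(S^3)$. The degree-zero term $\Phi_{Z_j,\t}\neq 0$ therefore cannot be cancelled. This is exactly where $g\ge 2$ enters, and your heuristic about a ``maximal grading shift allowed by adjunction'' does not supply it.

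\emph{The other option, cutting at $Y_j$.} This route sidesteps the cancellation entirely, so no grading argument is needed, but only after a verification you do not make. For levels strictly above $S^3_0(K)$ (which your choice of $C_0$ happens to arrange), the map $H^1(U_i)\to H^1(Y_i)$ is onto:
\begin{itemize}
\item admissibility gives $H^1(W_{0i})\twoheadrightarrow H^1(Y_i)$;
\item the meridian is nullhomotopic in $N_1$, so every class on $W_{0i}$ restricts to zero on $S^3_0(K)$ and hence extends over $X_1'$.
\end{itemize}
It follows that $Y_j$ is an admissible cut of $Z_j$, since both sides have $b_2^+\ge 1$. Then $\Phi_{Z_j,\t}\neq 0$ forces $x_j=F^-_{U_j\setminus B^4,\t}(\Theta_-)\neq 0$ in $\HFred(Y_j)$. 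The composition law at $Y_i$ has a single term, giving $F_{W_{ij}}(x_i)=x_j$, provided the $Z_j$ are nested with a common cap $X_1'$ so that the canonical classes agree. This argument fails precisely at $S^3_0(K)$ itself, which is why the paper, basing its class there, needs the grading computation. As written, though, you assert single-termness from admissibility alone and then revert to treating cancellation as unresolved, so the crux is not established.

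\emph{Two smaller points.}
\begin{itemize}
\item $S^3_0(K)$ is the boundary of the disk complement itself; adding the $0$-framed handle along the meridian returns $B^4$.
\item Infinitely many blowups do change the end. You should place finitely many in each $W_{i,i+1}$ (these remain admissible) and rerun the argument with blown-up $Z_j$, as the paper does.
\end{itemize}
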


\begin{proof}
     Notice $\cR$ has a compact exhaustion $\{C_n\}$ where we define $C_0$ to be the disk complement for $K$, and $C_i$ is constructed by attaching the finite truncation $n_1n_2...n_i$ of the generalized Casson handle to $C_0$ along a 0-framed meridian of the deleted disk. To each $C_i$ we add a product neighborhood to the boundary, ensuring their union is $\cR$. Let $W_{ij} := C_j - \text{int}(C_i)$ and let $Y_i := \partial C_i$. Then $\{C_i\}$ is an admissible exhaustion as in Definition~\ref{def:admissible}, 
     hence can be used to compute the end Floer homology:
    \[\HE(\cR,\s) = \varinjlim \HFred(Y_i, \s|_{Y_i}).\]
    It suffices to show this group is nonzero for some choice of asymptotic spin$^c$ structure. The first 3-manifold in this direct system is $Y_0 = S^3_0(K)$; 
    by Gabai's Theorem \cite{gabaifoliations} $Y_0$ admits a taut foliation, and by work of Eliashberg and Thurston \cite{confoliations, eliashbergfilling}, the product $Y_0\times I$ admits a symplectic strucure weakly filling both boundary components (the same as in Lemma \ref{lem:symplectic}). Note that $Y_0 \times I$ union the Stein admissible generalized Casson handle is a symplectic neighborhood of the end of $\cR$. The compatible spin$^c$ structure $\t'$ on this neighborhood will be our asymptotic spin$^c$ structure, that is, we will show $HE(\cR,\t')\ne 0$.
    
    Adjoining the Stein cobordism $W_{0j}$ to the upper boundary of $Y_0\times I$ yields a symplectic manifold with two convex ends. Work of Eliashberg \cite{eliashbergfilling}, and Etnyre \cite{etnyrefillings}, on symplectic caps shows there exist closed symplectic manifolds
    \[Z_j = X_1' \cup( Y_0\times I )\cup W_{0j} \cup X_j'\]
    where $X_1'$ and $X_j'$ have $b_2^+ \ge 2$. Note $X_1'$ is the same in each of the $Z_j$.

    Since $b_2^+(X_1')>1$ we may choose an admissible cut $P$ in $X_1'$. Let $U,V$ denote components of $X_1'\setminus P$ where $ V$ is a cobordism from $P$ to $Y_0$. Note that for any spin$^c$ structure $\s$ on $Z_j$, one can use $P$ to compute the mixed invariant $\Phi_{Z_j,\s}$. That is, given a fixed $\s$ on $Z_j$, there exists $\theta \in \HFp(P,\s|_P)$ with 
    \[\Phi_{Z_j,\s} = F^+_{(X_j'\setminus B^4)\circ W_{0j}\circ V, \s}(\theta).\]
    Fix the canonical spin$^c$ structure $\t$ on $Z_j$ (suppressing the subscript $j$). Then, writing the composition law for $(X_j'\setminus B^4)\circ W_{0j}\circ V$, each with their own restrictions of $\t$, we find
    \[\sum_{\eta \in H^1(Y_0;\mathbb{Z}) } F^+_{(X_j'\setminus B^4)\circ W_{0j}\circ V, \t + \delta\eta} =F^+_{(X_j'\setminus B^4) ,\t} \circ F^+_{W_{0j}, \t} \circ F^+_{V, \t} \]
    where $\delta$ is the Mayer Vietoris coboundary map, and the set of spin$^c$ structures in the sum simplifies due to admissibility of $W_{0j}$. Since all spin$^c$ structures in the sum restrict to $\t|_{P}$ on $P$, we may choose a single element $\theta \in \HFp(P,\t|_P)$ such that 
\[\sum_{\eta \in H^1(Y_0;\mathbb{Z}) } \Phi_{Z_j, \t + \delta\eta} = \sum_{\eta \in H^1(Y_0;\mathbb{Z}) } F^+_{(X_j'\setminus B^4)\circ W_{1,j}\circ V, \t + \delta\eta}(\theta) =F^+_{(X_j'\setminus B^4) ,\t} \circ F^+_{W_{1,j}, \t} \circ F^+_{V, \t} (\theta) .\]
We would like to show the right hand side of the above is nonzero. By the symplectic nonvanishing theorem \cite{ossymplectic} we know that $\Phi_{Z_j,\t}$, as an element of $\HFp(S^3)$ is nonzero in grading 0. But a priori, this term could cancel with others in the sum. We use gradings to show this does not happen: in particular, we will show that as $\eta$ varies through $H^1(Y_0,\mathbb{Z})$, the degrees 
\[d(\t + \delta\eta) = \frac{c_1(\t  +\delta\eta)^2 - 2\chi(Z_j) - 3\sigma(Z_j)}{4}\] are all distinct. Then it follows by Section 9 of \cite{OSfourmanifolds} that for $\eta\ne 0$, the element $\Phi_{Z_j,\t + \delta\eta}$ is zero in grading zero, hence the sum above is nonzero. It suffices to show the quantities $c_1(\t + \delta \eta)^2$ are all distinct. Since $H^1(Y_0;\mathbb{Z}) = \mathbb{Z}$ we may write $\delta(\eta) = n \delta(1)$ for some $n \in \bbZ$, then
\begin{align*}
    c_1(\t + \delta \eta)^2 = (c_1(\t) + 2n\delta(1))^2
    = c_1(\t)^2 + 2n^2 \delta(1)^2 + 4nc_1(\t) \cup \delta(1).
\end{align*}
Since $\delta$ is Poincar\'e dual to the inclusion induced map $H_2(Y_0)\to H_2(Z_j)$, it follows that $\delta(1)^2 = 0$. To demonstrate $c_1(\t) \cup \delta(1) \ne 0$, note this is the same as
$\langle c_1(\t), i_*[\Sigma]\rangle = \langle c_1(\omega) , i_*[\Sigma]\rangle,$
where $\Sigma \subset S^3_0(K)$ is obtained by capping a minimal-genus Seifert surface for $K$ with a disk \cite{gabaifoliations}. By Lemma~\ref{lem:symplectic} above and the fact that $K$ has genus at least 2, we have
\[\langle c_1(\t), i_*[\Sigma]\rangle = \langle i^* c_1(\t), [\Sigma]\rangle = \langle c_1(T\Sigma), [\Sigma]\rangle = \chi(\Sigma) = 2-2g  < 0. \]
Hence
$c_1(\t + n\delta(1))^2 = c_1(\t)^2 + 8n(1-g),$ which are all distinct as $n$ varies. 
This verifies $F^+_{(X_j'\setminus B^4) ,\t} \circ F^+_{W_{1,j}, \t} \circ F^+_{V, \t} (\theta) \in \HFp(S^3)$ is nonzero. Since the cobordism $X_j'\setminus B^4$ has $b_2^+ >1$ it follows that $F^+_{W_{1,j}, \t} \circ F^+_{V, \t} (\theta)$ projects to a nonzero element of $\HFred(Y_j, \t|_{Y_j})$ for each $j$. Thus, the images of $F^+_{V, \t} (\theta) + \HFinf(Y_0)$ under the cobordism maps $F^+_{W_{0j}}$ are all nonzero, and since $\t|_{Y_j} = \t'|_{Y_j}$ for all $j$, it follows that $\HE(\cR,\t')\ne 0$.

To verify the claim about blowups, note that attaching a 2-handle along a $-1$-framed unknot, unlinked from a given surgery diagram, is an admissible cobordism in the sense of Gadgil (as it does not affect $H^1$ of the 3- or 4-manifold). Thus, arranging the cobordisms $W_{i,i+1}$ to include finitely many blowups each yields an admissible exhaustion of $\cR\#_\infty \overline{\bbC P^2}$. The same steps as before, but using the blown-up $Z_j$, show nonvanishing end Floer homology of $\cR\#_\infty \overline{\bbC P^2}$. On the other hand, note that $\R^4\#_\infty \overline{\bbC P^2}$ has an admissible exhaustion with smooth $S^3$'s running off the end, hence has vanishing end Floer homology.
\end{proof}

\subsection{Distinguishing exotic planes}\label{sec:detecting}
We prove Theorem~\ref{theorem:exoticR4}, which states that $\cR$ is a Stein exotic $\R^4$ that embeds in $\R^4$, and $\cR$, $\overline{\cR}$ and their end sum $\cR\natural \overline{\cR}$ are three distinct exotic $\R^4$'s. Distinguishing the three exotic $\R^4$'s is done using the stable diffeomorphism technique from Bi\v zaca-Gompf \cite{bizacagompf}, which we show is compatible with end Floer homology.
\begin{proposition}\label{prop:exoticR4s}
   The manifolds $\cR$, $\overline{\cR}$ and their end sum $\cR\natural \overline{\cR}$ are three distinct, nonstandard, exotic $\R^4$'s, hence their branch sets are distinct exotic planes.    
\end{proposition}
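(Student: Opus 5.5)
The plan is to combine Theorem~\ref{thm:exotic} with the $\overline{\bbC P^2}$-stabilization trick of Bi\v zaca--Gompf \cite{bizacagompf}. The key fact is that $\cR\#_\infty\overline{\bbC P^2}$ is nonstandard, while $\overline{\cR}\#_\infty\overline{\bbC P^2}$ becomes standard. The three manifolds are then separated by combining these two facts with orientation reversal.

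\textbf{Step 1: $\cR$ is exotic and stays exotic after blowups.} I will apply Theorem~\ref{thm:exotic} with $C_0=X_{-1}$. This disk complement is for the knot $K_{-1}$ of genus $2$ by Proposition~\ref{prop:diskcomplements}, and $TH$ is attached along the $0$-framed meridian. I must check that $TH$ is a Stein admissible generalized Casson handle.
\begin{itemize}
\item Stein: each $Tk$ unit is realized by Stein handle attachments, by Theorem~\ref{thm:stein} and Figure~\ref{fig:steinpiecepart2}.
\item Admissible: each composite $n_k\cdots n_\ell$ is a stack of $Tk$'s. Each $Tk$ is $S^1\times B^3$ with the attaching circle nullhomotopic. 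So $H^1$ of the cobordism is generated by the last $1$-handle, and it surjects onto $H^1$ of the upper $3$-manifold. I would verify this surjectivity by a short Mayer--Vietoris/handle computation; the upper boundary has $H_1\cong\bbZ$, generated by the tip circle's dual.
\end{itemize}
Theorem~\ref{thm:exotic} then gives that $\cR$ is exotic and that $\cR\#_\infty\overline{\bbC P^2}\not\cong\R^4\#_\infty\overline{\bbC P^2}$. Embeddability of $\cR$ in $\R^4$ follows from item 1 of Theorem~\ref{thm:abstractcasson}: $TH$ embeds in a $2$-handle, so $\cR$ embeds in $X_{-1}\cup h$, which is $B^4$.

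\textbf{Step 2: $\overline{\cR}$ becomes standard after blowups.} In $\overline{\cR}$, the handle $\overline{TH}$ is built from triangular plumbed handles whose double points are now negative. A negative self-plumbing can be removed by blowing up with $\overline{\bbC P^2}$, as in Bi\v zaca--Gompf. Concretely, I would add a $-1$-framed meridian at a double point and slide, which converts a kink into an unkinked handle and splits off $\overline{\bbC P^2}$.

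Doing this at every stage, with infinitely many blowups pushed toward the end, turns $\overline{TH}$ into a genuine open $2$-handle. Indeed, each triangular unit becomes a $2$-handle once its double points are resolved, because the $0$-framed triangular $2$-handle then cancels. This gives
\[
\overline{\cR}\#_\infty\overline{\bbC P^2}\cong \operatorname{int}(\overline{X_{-1}}\cup h)\#_\infty\overline{\bbC P^2}=\R^4\#_\infty\overline{\bbC P^2}.
\]
Combined with Step 1, this shows $\cR\not\cong\overline{\cR}$ (orientation-preservingly). $\overline{\cR}$ is nonstandard because its reverse $\cR$ is.

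\textbf{Step 3: the end sum.} For $\cR\natural\overline{\cR}$, blow up only inside the $\overline{\cR}$ summand. This gives
\[
(\cR\natural\overline{\cR})\#_\infty\overline{\bbC P^2}\cong \cR\#_\infty\overline{\bbC P^2}\natural\R^4\cong\cR\#_\infty\overline{\bbC P^2},
\]
which is nonstandard by Step 1. Hence $\cR\natural\overline{\cR}$ is exotic and differs from $\overline{\cR}$.

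If $\cR\natural\overline{\cR}\cong\cR$, then reversing orientation gives $\overline{\cR}\natural\cR\cong\overline{\cR}$. Since $\overline{\cR}\natural\cR\cong\cR\natural\overline{\cR}$ (using uniqueness of end sums along rays in $\R^4$'s), this would force $\cR\cong\overline{\cR}$, contradicting Step 2.

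Finally, each of the three manifolds carries the rotational involution. Distinct branched double covers force distinct branch sets, which gives the claim about the planes.

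\textbf{Main obstacle.} I expect the hardest step to be Step 2: making rigorous that the negative-kink resolution works for the triangular handles and the infinite tower. The blowups must be arranged with an exhaustion compatible with the end structure. I would first treat a single $\overline{Tk}$ by explicit Kirby calculus, using Figure~\ref{fig:TH2} with both double points negative, and then iterate stagewise as Bi\v zaca--Gompf do for Casson handles. A secondary check is the admissibility computation in Step 1.
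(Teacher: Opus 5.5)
Your proposal is correct and is essentially the paper's argument: Theorem~\ref{thm:exotic} gives $\overline{\bbC P^2}$-stable nonstandardness of $\cR$; the Bi\v zaca--Gompf clasp-undoing gives stable standardness of the mirror (the paper phrases this as $\cR\#_\infty\bbC P^2\cong\R^4\#_\infty\bbC P^2$, which is your Step~2 with orientation reversed); blowups are moved across the end-summing $\R^3$; and your use of $\overline{\cR\natural\overline{\cR}}\cong\cR\natural\overline{\cR}$ to separate the end sum from $\cR$ is an equivalent substitute for the paper's $\bbC P^2$-stabilization of the end sum. Two small fixes are needed: (i) a single unclasped $Tk$ is not a 2-handle, since it still has $\pi_1\cong\bbZ$ (the triangular 2-handle cancels only one of the two 1-handles, and the other is cancelled by the next stage's attaching handle), which is why the paper argues via the nested exhaustion $C_k\cup h\cong B^4$ inside the next blown-up stage; and (ii) you should also check that the branch planes are topologically standard (the paper cites Teng), because distinct exotic covers alone do not make the branch sets exotic planes.
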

   \begin{figure}[ht]
\centering
            \begin{overpic}[width=\textwidth, 
 unit=0.5mm, tics=10]{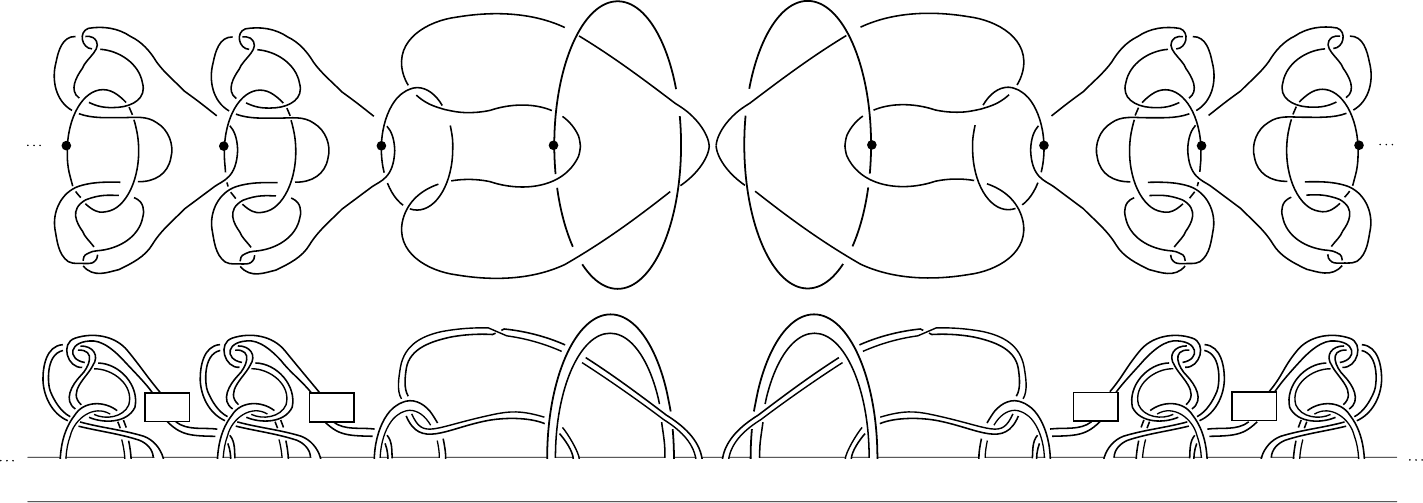}
        \put(33,35){$1$}
        \put(65,35){$-1$}
        \put(75.5,6){$-2$}
        \put(86.6,6){$-2$}
        \put(11.3,6){$2$}
        \put(22.7,6){$2$}
        \put(80,33.5){$0$}
        \put(91,33.5){$0$}
        \put(8,33.5){$0$}
        \put(19,33.5){$0$}
    \end{overpic}
        \caption{Top: the end sum $\cR\natural \overline{\cR}$ with visible symmetry. Bottom: the branch set in $\R^4$.}
        \label{fig:plane2}
\end{figure}
\begin{proof}
These manifolds are homeomorphic to $\R^4$ by Theorem~\ref{thm:open2handles}.
     Note, $TH$ is Stein by Theorem~\ref{thm:stein} and has an admissible exhaustion by \cite[Lemma 2.3]{gadgil}. By this and Proposition~\ref{prop:diskcomplements}, we see $\cR$ is a disk complement for a genus 2 knot union a Stein generalized Casson handle. Thus, Theorem~\ref{thm:exotic} shows $\cR$ is an exotic $\R^4$, and that $\cR$ is $\overline{\bbC P^2}$-stably nonstandard. We will show $\cR$ is $\bbC P^2$-stably standard. This property is not shared by $\overline{\cR}$, since $\overline{\cR} \#_\infty \bbC P^2 \cong \overline{\cR \#_\infty \overline{\bbC P^2} }$ has nonvanishing end Floer homology with its opposite orientation. Similarly we will show $\cR\natural\overline{\cR}$ is not $\bbC P^2$ or $\overline{\bbC P^2}$ stably standard, hence all three are distinct and nonstandard. Note that $\cR$ admits a compact exhaustion by manifolds $C_k$, which are the disk complement $X_{-1}$, union the first $k$ copies of $Tk$, union a collar of the boundary. Adding infinitely many blowups we may assume $C_k \#_{2k}\bbC P^2 \subset \text{int}(C_{k+1} \#_{2(k+1)}\bbC P^2)$ for each $k$. Using the two $\bbC P^2$'s of the $k+1$th stage, we may undo the top stage clasps of $C_k$ preserving the 0-framing.
But since $C_k$ union a zero framed 2-handle attached to its tip circle is $B^4$, it follows that for any $k$ the embedding\[C_k \#_{2k}\bbC P^2 \into \text{int}(C_{k+1} \#_{2(k+1)}\bbC P^2)\]
factors through an embedding of $B^4\#_{2k}\bbC P^2$. Thus, $\cR$ admits a compact exhaustion by smooth nested $B^4\#_{2k}\bbC P^2$'s (where two new blowups are added at each stage). Since the complement of a smooth ball in a smooth ball is a standard annulus, it is now straightforward to construct a diffeomorphism from $\cR\#_\infty \bbC P^2$ to $\R^4_{std}\#_\infty \bbC P^2$.
Similarly, by adding infinitely many negative blowups to  $\cR\natural\overline{\cR}$ we may ambiently isotop them to be on one side of the end-summing $\R^3$, i.e. on the side of $\overline{\cR}$. The proof above shows $\overline{\cR}\#_\infty \overline{\bbC P^2} \cong \R^4_{std}\#_\infty \overline{\bbC P^2}$  and it follows that $\cR\natural \overline{\cR}\#_\infty \overline{\bbC P^2}$ is diffeomorphic to ${\cR} \#_\infty \overline{\bbC P^2}$, which we saw is not standard. The same holds for positive blowups. By drawing Kirby diagrams for $\overline{\cR}$ and $\cR\natural \overline{\cR}$ where their infinite chains lie along the horizontal axis, it is clear that in either case, rotating by $\pi$ about this axis gives a double branched covering over a plane in $\R^4$. We draw part of the plane for $\cR\natural \overline{\cR}$ in Figure \ref{fig:plane2}. By Teng \cite{endkhovanov} it is straightforward to see that the planes are topologically standard.
\end{proof}
Recall that an exotic $\bbR^4$ is called \textit{small} if all of its compact, codimension-0 submanifolds embed smoothly in $\bbR^4_{std}$, and is called \textit{large} otherwise. It is an open question whether every small exotic $\bbR^4$ embeds in $\bbR^4_{std}$.
\begin{proof}[Proof of Theorem~\ref{theorem:exoticR4}]
We saw $\cR$ is Stein in Theorem~\ref{thm:stein}. Smallness of $\cR$ follows from Theorem~\ref{thm:abstractcasson}, which shows $TH$ embeds in a standard 2-handle rel boundary. Exoticness is shown in Proposition~\ref{prop:exoticR4s}.
\end{proof}

\begin{remark}\label{remark:endfoer}
    The same technique applies to any generalized ribbon $\R^4$, made with a Stein admissible generalized Casson handle with only positive clasps, to distinguish between its reverse orientation and their end sum. In particular, this holds if we combine the disk complement $X_n$ (from Section~\ref{sec:handle}) with $TH$. 
    Note for all $n$, the knot Floer homology of $K_n$ is the same by Proposition \ref{prop:diskcomplements}, and it is likely all the $n$th-stage $HF^+$ modules will coincide. Thus, the present method (and the more refined technique of the Hom, Lidman, and the author \cite{elihomlidman}) seem unlikely to be able to distinguish these.
\end{remark}

\section{Four-stage disks and topologically slice links }\label{sec:disk} 

Given a 4-manifold with boundary $(M,\partial M)$ and a framed circle $C \subset \partial M$, \textit{disk embedding theory} gives techniques for finding topologically flat disks $(D,\partial D) \into (M,\partial M)$. Cha and Powell use such a technique to find topologically flat core disks in the first four stages of any Casson handle (i.e. in any height 4 Casson tower) \cite[Theorem~A]{chapowell}. We adapt this technique to the \textit{banded Casson towers} of Section~\ref{sec:THTriangular}, which include $(TH, \partial_-TH)$. We review the relevant definitions, following notation of \cite{chapowell}. For more details on disk embedding theory see \cite{diskembedding, chapowell, freedmanquinn}. 

In this section a \textit{grope of height $n$} denotes a type of 2-complex with circle boundary, constructed inductively as follows. Start with a genus $g$ surface with a single boundary component (this is a grope of height 1); to each element of a symplectic basis of the first homology of the first stage surface, attach a height 1 grope by a homeomorphism from its boundary circle to the basis curve, building a second stage of surfaces and yielding a height 2 grope, and so on. For a height $n$ grope $G_n$, we denote stages $p$ through $q$ inclusive by $G_{p-q}$. A \textit{capped grope of height $n$} is obtained from a grope of height $n$ by attaching disks to its top stage symplectic bases, and will be denoted $G_n^c$. 
In \cite{chapowell}, a standard model embedding of a capped grope in $\R^3$ is given. Embedding this standard model into $\R^4 = \R^3\times \R$ and taking a 4-dimensional tubular neighborhood yields a \textit{model framed (capped) grope}, denoted $\widetilde{G_n^c}$; the attaching region $\partial_- \widetilde{G_n^c}$ is defined to be the solid torus obtained by thickening the base circle of the first stage. The normal bundle of any surface or disk of a model capped grope in $\R^4$ is framed by taking a 1-dimensional framing in $\R^3$, times the $\R$ factor in $\R^3\times \R$. The attaching circle and all symplectic bases obtain the induced framing, and we use this framing throughout. Finally, a \textit{properly immersed (capped) grope} in a 4-manifold $(M,\partial M)$ is obtained by adding plumbings and self-plumbings to the caps in a model framed capped grope $\widetilde{G_n^c}$, and taking the image of the embedding $(\widetilde{G_n^c},\partial_-\widetilde{G_n^c}) \hookrightarrow (M,\partial M)$. We will also denote properly immersed capped gropes by $({G_n^c},\partial_-{G_n^c})$.

 We will use the following theorem, adapted from \cite{chapowell}, to find core disks in banded Casson towers.

\begin{theorem}[Cha-Powell, Theorem 3.4]\label{thm:chapowelldisk}
    Let $(M,\partial M)$ be a smooth 4-manifold with boundary and let $n \in \bbN$ with $n\ge 2$. Suppose $(G^c_n,\partial_-G^c_n)\to (M,\partial M)$ is a properly immersed capped grope of height $n$ and $\nu G_n^c$ is a further thickening of $G_n^c$. If the image of the inclusion induced map
    \[\pi_1(\nu G_n^c \setminus G_{1-1},*)\to \pi_1(M\setminus G_{1-1},*)\]
    is of subexponential growth for all choice of basepoints, then there is a topologically flatly embedded disc in $M$ with the same framed boundary as $G_n^c$.
\end{theorem}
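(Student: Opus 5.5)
This theorem is quoted from Cha--Powell \cite[Theorem~3.4]{chapowell}, stated here in a form adapted to our setting. So the plan is not to reprove it from scratch. Instead I will check that our hypotheses match theirs and indicate how their argument goes.

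\textbf{Step 1: matching the setup.} Our conventions for properly immersed capped gropes $(G^c_n,\partial_-G^c_n)$ are exactly theirs: a model framed grope $\widetilde{G^c_n}\subset \R^3\times\R$, with plumbings and self-plumbings added only among the caps, and with the framings induced from the $\R$ factor. Likewise, $G_{1-1}$ denotes the first stage surface, and the subexponential growth condition is placed on the image of $\pi_1(\nu G_n^c\setminus G_{1-1})$. With these identifications, the statement is verbatim theirs.

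\textbf{Step 2: their argument.} First, height raising. Since $n\ge 2$, the standard grope height raising and contraction moves from \cite{freedmanquinn, diskembedding} allow one to trade the given properly immersed capped grope, inside $\nu G_n^c$, for a capped grope of arbitrary height. All new surfaces lie in a neighborhood of stages $2$ and higher. The double points of the caps are pushed off using disks whose complementary fundamental group stays within the image of $\pi_1(\nu G_n^c\setminus G_{1-1})$.

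Next, the group-theoretic hypothesis. Subexponential growth makes these groups ``good'' in the sense of Freedman--Teichner. This allows one to apply the $\pi_1$-null disk embedding theorem, which produces a topologically flat embedded disk.

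Throughout, the first stage $G_{1-1}$ is left untouched, so its framed boundary is preserved.

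\textbf{Main obstacle.} The delicate point is the growth bookkeeping. Each height-raising step must keep the relevant fundamental groups inside the image of $\pi_1(\nu G_n^c\setminus G_{1-1})$, so that the Freedman--Teichner ``subexponential implies good'' argument applies at every stage. This is exactly what Cha--Powell carry out. In the present paper, the genuine work lies downstream: verifying the subexponential growth hypothesis for banded Casson towers. I expect that verification to mirror their proof of Theorem~A, exhibiting the required grope inside the first few stages of the tower.
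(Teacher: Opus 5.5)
Your proposal matches the paper. The paper gives no proof of this statement: it imports it as a black box from Cha--Powell's Theorem 3.4, with the grope conventions set up exactly as you describe, and all of its own work is downstream, in verifying the subexponential growth hypothesis for banded Casson towers. Your account of the underlying Freedman--Teichner mechanism is only heuristic (the hypothesis concerns an image subgroup in $\pi_1(M\setminus G_{1-1})$, not goodness of an ambient fundamental group), but nothing in your argument depends on it.
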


In order to apply Theorem~\ref{thm:chapowelldisk}, we set up two lemmas, which are modifications of lemmas 4.2 and 4.3 of~\cite{chapowell}. In the following, $T$ denotes a banded Casson tower, and we write $T_{1-n}$ for the first $n$ stages of $T$.

\begin{lemma}\label{lem:grope}
Let $T$ be a banded Casson tower. Then $T_{1-3}$ contains a properly immersed capped grope of height $2$, $G_2^c$, whose framed attaching circle  coincides with the framed attaching circle $C$ of $T_{1-3}$.
\end{lemma}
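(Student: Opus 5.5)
I will adapt the argument of Cha--Powell's Lemma~4.2, which converts the first three stages of a Casson tower into a height 2 capped grope, to the banded setting. The idea is to trade each double point of the first stage for a handle of a surface. Then the second stage supplies caps for half of the resulting symplectic basis, and the third stage supplies the immersed caps.

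\textbf{Step 1: first-stage surface.} Let $D_1$ be the immersed core disk of the first stage $k_1$, a self-plumbed 2-handle with double points $p_1,\dots,p_n$. It induces the attaching framing on $C$. At each $p_i$ I will remove the double point by the standard local move: delete small disk neighborhoods of $p_i$ in the two sheets and glue in a tube running along one sheet. This gives an embedded genus $n$ surface $\Sigma_1$ with $\partial\Sigma_1 = C$, still carrying the correct framing, since the move is supported away from $C$ and in a neighborhood of the sheets.

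For each handle of $\Sigma_1$ there is a symplectic pair $(\alpha_i,\beta_i)$. Here $\alpha_i$ is a meridian of the tube, so it bounds a small normal disk that meets the other sheet once. The curve $\beta_i$ runs along the tube and back through the sheet, so it is isotopic in $\nu\Sigma_1$ to the double point loop of $p_i$, i.e.\ to the meridian $\mu_i$ of the $i$th dotted circle.

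\textbf{Step 2: second-stage surfaces from the banded 2-handles and the next tower stage.} In a banded plumbed 2-handle, the $\mu_i$ fall into band-equivalence classes. For one chosen representative per class, $\mu_i$ is a tip circle, and a second-stage banded plumbed handle $k_2$ is attached to it. Every other $\mu_j$ in the class is connected to the representative through the 0-framed band 2-handles. Going along the unique band path from $\mu_j$ to the representative (unique because there are at most $n-1$ bands, no loops, and at most one band per pair, so each class is a tree), $\mu_j$ bounds a surface in $k_2$ union the band 2-handles.

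I expect this surface to be the band-sum of the band 2-handle cores with a parallel copy of the core disk of the tip handle. I plan to make it embedded by tubing away the second-stage double points exactly as in Step 1, producing surfaces $\Sigma_{2,i}$ bounded by the $\beta_i$. Their symplectic bases are again meridians (dual to normal disks) and double-point loops of $k_2$. The double-point loops bound core disks of the third-stage handles $k_3$ (immersed, with plumbings), or the analogous banded surfaces. On those surfaces I will only use immersed disks, obtained by pushing the third-stage immersed core disk together with the 0-framed band 2-handles, so that they form caps.

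\textbf{Step 3: arranging the grope shape and framings.} A height 2 capped grope needs second-stage surfaces on both curves of each pair, so the $\alpha_i$ must also be handled. As in Cha--Powell, I will use the normal disk bounded by $\alpha_i$ as a degenerate second-stage surface with a trivial (genus 0) grope. Alternatively, I will do a symmetric contraction trick, tubing the intersection of the $\alpha_i$-disk with $\Sigma_1$ into a parallel copy of the $\beta_i$-surface. This trades the intersection for genus, making the $\alpha_i$-surface a grope stage whose tips bound caps in $T_3$. I will check that these caps are disjoint from $G_{1-1}$ except through allowed plumbings.

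For framings, the tips $\mu_j$ carry the canonical framing. The band 2-handles are 0-framed relative to it by the definition of triangular families. So the caps and surfaces receive the model framing, and the attaching framing on $C$ agrees with that of $D_1$, hence with the attaching framing of $T$.

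The main obstacle I expect is Step 2 for the non-representative double points. I must check that the banded surface bounded by $\beta_j$ is disjoint from the other second-stage surfaces and from $\Sigma_1$. I must also check that it has the correct framing, using the tree structure of the bands and the condition that no circle is banded to itself.
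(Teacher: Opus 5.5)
\textbf{The gap: the curves $\alpha_i$.} The problem is in Step 3, and it is forced by your choice of symplectic basis in Step 1.

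A properly immersed capped grope of height 2 needs an embedded, correctly framed second-stage surface of positive genus on \emph{every} first-stage basis curve. Its interior must be disjoint from $\Sigma_1$ and from the other surface stages; only caps may have intersections, and only with caps. Your normal disk on $\alpha_i$ is not such a stage: it is a disk, and it meets $\Sigma_1$. This is also not what Ray's construction (used by Cha--Powell) does.

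Your alternative cannot repair this. Since $H_2(T)=0$ and $\Sigma_1$ is properly embedded, the algebraic intersection number with $\Sigma_1$ of any surface in $T$ bounded by the framed push-off of $\alpha_i$ is independent of the surface. It is therefore $\pm1$ for all of them, by your own normal disk. So no tubing produces a stage on $\alpha_i$ disjoint from $\Sigma_1$. Changing the push-off by a boundary twist removes the intersection only by changing the framing by $\pm1$, which an embedded stage cannot absorb. The same issue recurs for the meridian curves of your second-stage surfaces, which you intend to cap with normal disks that meet the body.

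\textbf{The missing idea.} Choose the surface and basis so that \emph{both} curves of every symplectic pair are parallel, off $\Sigma_1$, to the double point loop. The paper (following Ray) takes $\Sigma_1$ to be the disk bounded by $C$, punctured where the dotted circles $c_i$ pass through it, with one tube running along each $c_i$. The tube meridian is then the essential loop $\mu_i$ itself; in your local resolution model it is instead null-homotopic in $k$ and links $\Sigma_1$. The dual curve can be chosen so that both curves extend by disjoint annuli in the boundary, interiors disjoint from $\Sigma_1$, to parallel copies of $\mu_i$. For non-tip $\mu_i$, these annuli are lengthened by tubing across the band 2-handles. Then all $2n$ curves end on parallel copies of tip circles, where they bound disjoint parallel copies of the next stage's first-stage surface. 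Repeating this at the second stage gives every top curve a cap from the third stage.

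\textbf{Framing.} Your framing bookkeeping also needs to be redone. The principle in Step 1, that a modification supported away from $C$ cannot change the framing induced on $C$, is false: adding or removing a kink changes the induced framing by $\pm2$. With the paper's conventions, the immersed core disk induces the attaching framing twisted by twice the signed count of double points (four twists for $Tk$). It is the embedded surface $\Sigma_1$ that induces the attaching framing. So your conclusion about $\Sigma_1$ is right, but your premise that $D_1$ induces the attaching framing is wrong.

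For the same reason, caps made from third-stage immersed core disks are misframed, contrary to your claim that they receive the model framing. The paper fixes this by adding local kinks of the opposite sign to each cap (two negative double points for $Tk$).

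\textbf{Disjointness.} The obstacle you flag is handled in the paper as follows. Shrink the tubes so that intersections of the punctured disk with the band 2-handles lie in its interior, and push them into $B^4$. Wherever tubing across 2-handles creates new intersections, push one sheet into $B^4$.
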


  \begin{figure}[htb]
    \centering
        \begin{subfigure}[b]{0.3\textwidth}
            \begin{overpic}[width=\textwidth, 
 unit=1mm, tics=5]{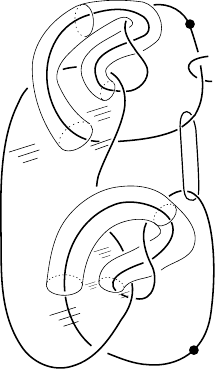}
            \put(46, 55){$0$}
 \end{overpic}
            \caption{}
            \label{fig:gropestage1}
        \end{subfigure}\hspace{8mm}
        \begin{subfigure}[b]{0.31\textwidth}
            \begin{overpic}[width=\textwidth, 
 unit=1mm, tics=5]{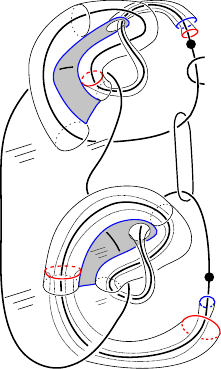}
            \put(45, 55){$0$}
 \end{overpic}
            \caption{}
            \label{fig:gropestage2}
        \end{subfigure}\hspace{5mm}
        \begin{subfigure}[b]{0.24\textwidth}
            \begin{overpic}[width=\textwidth, 
 unit=1mm, tics=5]{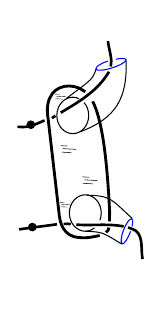}
             \put(13, 48){$0$}
 \end{overpic}
            \caption{}
            \label{fig:gropestage3}
        \end{subfigure}
       \caption{(a): First stage grope $\Sigma$ in a Kirby diagram of $Tk$. (b): Symplectic basis of $\Sigma$ and extending annuli. (c): Surface used to `tube across the 2-handle', drawn near the 2-handle of (b).
       }
        \label{fig:grope}
    \end{figure}

\begin{proof}
    We first consider the case $T=TH$, and then explain how to adapt the proof to the general case. Recall $TH$ is a union of $Tk$ units.
    
    We construct $G_2^c$ following Ray's construction of a grope in a Casson tower \cite{Ray}, which readily generalizes to $TH$. We first find the 2-complex serving as the spine of the grope, then take a tubular neighborhood. The first stage surface $\Sigma$ is the genus 2 surface indicated in Figure~\ref{fig:gropestage1}, with boundary equal to the attaching circle of $Tk$. Note, the tip circle of $Tk$ is indicated as a meridian to the upper 1-handle, and this is where the second stage $Tk$ is attached. Figure~\ref{fig:gropestage2} describes four embedded annuli, interiors disjoint from $\Sigma$, extending from a symplectic basis for $H_1(\Sigma)$. Lengthen the lower two annuli by attaching two disjoint parallel copies of the surface indicated in Figure~\ref{fig:gropestage3}, union two disjoint cores of the 2-handle. We call this `tubing across a 2-handle'. 
    
    We have arranged that all four loose boundary components of the annuli are parallel copies of the tip circle of $Tk$. Thus, they bound disjoint genus 2 surfaces obtained from the second stage copy of $Tk$: these are parallel copies of punctured, second-stage copies of the surface described in Figure~\ref{fig:gropestage1}, union cores of the second-stage 2-handle $h$. The resulting complex $G_2$ is embedded, and constitutes the spine of the surface stages of $G_2^c$. 
    We obtain caps for $G_2$ by tubing on copies of core disks for the third stage $Tk$ as follows. Extend 16 annuli from the top stage symplectic bases for $G_2$ as in Figure~\ref{fig:gropestage2}. These will intersect each other in disjoint circles and arcs, but we can always push one sheet into $B^4$ to obtain embedded annuli. Tubing on top stage immersed core disks of the third $Tk$, as we defined in Section~\ref{sec:THTriangular} when we defined $Tk$, will provide the wrong framing: this is because the attaching framing for $C\subset Tk$ and the framing coming from the immersed core disk $D\subset Tk$ differ by four twists (see Section \ref{sec:handle}). To fix this, locally add two negative double points to the top stage disk, obtaining a new disk $D'\subset Tk$ whose induced framing on the attaching circle coincides with the canonical framing. Now, tubing copies of $D'$ on to $G_2$ gives a height 2 capped grope. Pushing it into the interior and taking a tubular neighborhood gives a height 2 framed embedded grope with framed attaching circle $C$ as desired. 

    For general $T$, recall the banded plumbed 2-handle $T_{1-1}$ admits a Kirby diagram as the standard diagram for a plumbed 2-handle $k$, with several 0-framed 2-handles attached, analogous to Figure~\ref{fig:gropestage1}. An example of a banded plumbed handle is shown in Figure~\ref{fig:triangular}. Thus, we may define a first stage surface $\Sigma \subset T_{1-1}$ as a multiply punctured disk $\widetilde\Sigma$ bounded by the attaching circle, with one tube added for each double point of $k$. This surface might intersect the 2-handles (and their defining bands) but by shrinking its tubes we can assume the intersections occur in $\text{int}(\widetilde\Sigma)$. Thus, the intersections can be eliminated by pushing small $D^2$ neighborhoods in $\widetilde\Sigma$ into $B^4$. Extending annuli from a symplectic basis for $\Sigma$, and tubing across the 0-framed triangular 2-handles as before, we may arrange that the loose boundary components of these annuli are parallel copies of tip circles for $T_{1-1}$ (recall, the tip circles are a subcollection of meridans to the dotted circles of $k$.) Moreover, these annuli are disjoint, since we eliminated intersections between $\widetilde\Sigma$ and the defining bands for the 2-handles. Completing the second stage and caps proceeds as before, with various banded plumbed 2-handles used instead of $Tk$, and analogues of the surface we defined in $T_{1-1}$. Note that intersections produced by tubing across 2-handles (e.g. if a band passes through a band) can be eliminated by pushing one sheet into $B^4$. 
\end{proof}
    
\begin{lemma}\label{lem:pi1}
 Let $T$ be a banded plumbed 2-handle, and let $\Sigma \subset T$ denote the first stage surface of Lemma~\ref{lem:grope}. Then $\pi_1(T\setminus \Sigma) \cong \langle \mu,\mu_1,...,\mu_k| [\mu,\mu_i], i=1,...,k \rangle$ where $\mu$ is a meridian of $\Sigma$ and the $\mu_i$ are tip circles to $T$.
\end{lemma}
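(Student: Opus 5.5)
The plan is to mirror Cha--Powell's Lemma 4.3 and compute $\pi_1(T\setminus\Sigma)$ from an explicit handle description of the complement. Write $T$ as the standard diagram of a self-plumbed handle $k$ (dotted circles $c_1,\dots,c_n$, one per double point), together with $m\le n-1$ $0$-framed 2-handles running along the bands. Here $\Sigma$ is the punctured disk $\widetilde\Sigma$ with one tube per double point, arranged as in the proof of Lemma~\ref{lem:grope} so that it misses the 2-handles and bands. Pushing the interior of $\Sigma$ slightly into $B^4$, the complement $T\setminus\nu\Sigma$ is obtained from the complement of a pushed-in surface in $S^1\times B^3$-type pieces. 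This is the same computation Cha--Powell do for a plumbed 2-handle, followed by attaching the $m$ band 2-handles.

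First, for the underlying plumbed handle $k$, I would show $\pi_1(k\setminus\Sigma)\cong\langle \mu,x_1,\dots,x_n \mid [\mu,x_i]\rangle$. Here $x_i$ is a meridian of $c_i$, i.e.\ a double point loop, and $\mu$ is a meridian of $\Sigma$.

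\begin{itemize}
\item I would compute this by viewing $k\setminus\nu\Sigma$ as $B^4$ minus a pushed-in surface, with the dotted circles carved out.
\item Since $\Sigma$ is a tubed unknotted disk, $B^4\setminus\nu\Sigma$ has the obvious handle structure: one 1-handle for $\mu$, plus a 1-handle and a 2-handle for each tube. Each tube contributes a torus-type relation.
\item Each tube encircles a dotted circle $c_i$, so the 2-handle relation says exactly that $\mu$ commutes with the meridian $x_i$. The generators coming from the tubes can be identified with $x_i$.
\item This is precisely Cha--Powell's Lemma 4.3 argument, which I would cite and adapt, checking the relations via Wirtinger-type reading of the diagram.
\end{itemize}

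Next I would attach the $m$ band 2-handles. A $0$-framed 2-handle following a band between $c_i$ and $c_j$ is attached along a curve $\gamma$ disjoint from $\Sigma$. In $\pi_1(k\setminus\Sigma)$, $\gamma$ reads as $x_ix_j^{\pm1}$ times conjugating elements, and possibly powers of $\mu$. Because $\Sigma$ was made disjoint from the bands by shrinking its tubes, $\gamma$ can be isotoped off the tubes, so no $\mu$ factors appear. Conjugation by $\mu$ is trivial on the $x_i$. Thus each relation identifies $x_j$ with a word in the other generators.

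Since bands form a forest (no self-bands, at most one band per pair, at most $n-1$ bands), Tietze moves eliminate one generator per band. This leaves one generator per equivalence class, which is exactly a tip circle $\mu_l$ by the description of tip circles in Section~\ref{sec:THTriangular}. The commutation relations with $\mu$ persist, giving the stated presentation with $k=n-m$.

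The main obstacle is the band relations. I must verify that each attaching curve is genuinely a product $x_ix_j^{\pm1}$ up to conjugation by elements commuting appropriately, with no stray $\mu$-dependence. I also need the result to be a free group on the classes rather than something with extra relations among distinct $x_i$, since the $x_i$ themselves do not commute.

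I would handle this by choosing the basepoint near $\widetilde\Sigma$ and reading $\gamma$ directly. If a band passes through another band, the relation is conjugated by $x$'s, but it still eliminates a generator by Tietze moves. Because the band graph is a forest, the elimination can be done inductively, working from the leaves inward.
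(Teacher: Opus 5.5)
Your outline is the paper's own route: cite Cha--Powell's Lemma~4.3 for the underlying plumbed handle $k$, then view $T\setminus\Sigma$ as $k\setminus\Sigma$ with the band $2$-handles attached, which identify double point loops until only tip circles remain. The gap is in how you handle the conjugating words.

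\textbf{The Tietze step.} A band $2$-handle joining $c_i$ and $c_j$ imposes $x_i=w\,x_j^{\pm1}w^{-1}$, where $w$ records how the band core winds around the dotted circles. A Tietze move eliminates $x_i$ only when $w$ does not involve $x_i$ (symmetrically, it eliminates $x_j$ only when $w$ avoids $x_j$). Working inward from the leaves does not help: for $n=2$ with a single band, both circles are leaves.

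\textbf{A counterexample.} Nothing in the definition (disjoint bands in $\partial_+k$, no self-bands, at most one band per pair) prevents a band from $c_1$ to $c_2$ whose core winds so that $w=x_2x_1$. The relation then becomes $x_1x_2x_1=x_2x_1x_2$, so $\pi_1(T)$ is the non-abelian trefoil group. By general position $\pi_1(T\setminus\Sigma)\to\pi_1(T)$ is onto, so $\pi_1(T\setminus\Sigma)$ cannot be $\langle\mu,\mu_1\mid[\mu,\mu_1]\rangle\cong\mathbb{Z}^2$. For such $T$ your step fails, and so does the conclusion itself.

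\textbf{What is needed.} The bands must be standard, as in Figure~\ref{fig:triangular}: their cores must not wind around the dotted circles. Then, with base paths chosen along the bands, each relation is literally $x_i=x_j^{\pm1}$. The paper's one-line argument (the $2$-handles ``identify pairs of $a_i$'') tacitly assumes this. You should impose it explicitly rather than assert that arbitrary conjugations are harmless.

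\textbf{Forest hypothesis.} The three stated conditions do not force the bands to form a forest; for $n\ge4$, three bands may form a triangle on $c_1,c_2,c_3$. Your count of one eliminated generator per band uses acyclicity. This is an additional, intended hypothesis: it is what makes $T$ triangular, and it underlies the paper's description of one tip circle per equivalence class.

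\textbf{Powers of $\mu$.} Your claim that no powers of $\mu$ appear is not justified. In the proof of Lemma~\ref{lem:grope}, band curves may cross $\widetilde\Sigma$, and those crossings are removed by pushing small disks of $\widetilde\Sigma$ into $B^4$. A band curve can therefore read $\mu^{c}x_ix_j^{\mp1}$, up to conjugation. This is harmless, because $\mu$ is central in $\langle\mu,x_1,\dots,x_n\mid[\mu,x_i]\rangle$. You can still solve $x_i=\mu^{-c}x_j^{\pm1}$, and the substituted relation $[\mu,\mu^{-c}x_j^{\pm1}]$ follows from $[\mu,x_j]$.
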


\begin{proof}
Let $k$ be the underlying plumbed 2-handle to $T$. The surface $\Sigma\subset T$ of Lemma~\ref{lem:grope} is contained in $k$, and coincides with the first stage surface $\Sigma'$ of the grope constructed by Ray in $k$ \cite{Ray}. Lemma 4.3 of \cite{chapowell} shows $\pi_1(k\setminus \Sigma') \cong \langle \mu,a_1,...,a_n| [\mu,a_i], i=1,...,n \rangle$ where $\mu$ is a meridian to $\Sigma'$ and $a_i$ are the double point loops of $k$. Then $T\setminus \Sigma = (k\setminus \Sigma')$ union 2-handles, which identify pairs of $a_i$ until the tip circles remain.
\end{proof}

\begin{theorem}\label{thm:chapowell}
    Any banded Casson tower $T$ contains a topologically flat core disk in its first four stages, bounded by its framed attaching circle $C$. In particular this holds for $TH$.
\end{theorem}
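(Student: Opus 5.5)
We follow the Cha--Powell proof of \cite[Theorem~A]{chapowell} and apply Theorem~\ref{thm:chapowelldisk} with $n=2$ and $M = T_{1-4}$, the first four stages of $T$. By Lemma~\ref{lem:grope}, $T_{1-3}\subset T_{1-4}$ contains a properly immersed capped grope $G_2^c$ of height $2$ whose framed attaching circle is $C$. It then suffices to check the growth hypothesis: for every basepoint, the image of
\[\pi_1(\nu G_2^c\setminus G_{1-1})\to \pi_1(T_{1-4}\setminus G_{1-1})\]
has subexponential growth. Here $G_{1-1}=\Sigma$ is the first stage surface of Lemma~\ref{lem:grope}.

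\textbf{Step 1: compute the target group.} By Seifert--van Kampen, $T_{1-4}\setminus\Sigma$ is obtained from $T_{1-1}\setminus\Sigma$ by gluing $T_{2-4}$ along solid tori around the tip circles $\mu_i$. By Lemma~\ref{lem:pi1},
\[\pi_1(T_{1-1}\setminus\Sigma)=\langle \mu,\mu_1,\dots,\mu_k \mid [\mu,\mu_i]\rangle .\]
Each $\mu_i$ becomes the attaching circle of a height-3 banded Casson tower. By Definition~\ref{def:gch}, Item~2, applied to the banded plumbed handles, each attaching circle is nullhomotopic in its own stage. Hence every $\mu_i$ dies in $\pi_1(T_{1-4}\setminus\Sigma)$, and the new generators come from tip circles of $T_{2-4}$. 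These are in turn killed by the next stage, except at the top (fourth) stage. Two pieces of bookkeeping remain: the commutation relations with $\mu$, and whether $\mu$ survives. Since $\mu$ commutes with the images of the $\mu_i$, which are trivial, the image of anything involving $\mu$ is at worst abelian, generated by $\mu$.

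\textbf{Step 2: locate the image.} The thickening $\nu G_2^c$ minus $\Sigma$ deformation retracts onto a neighborhood of the second stage surfaces and caps, together with meridians of $\Sigma$. Its $\pi_1$ is generated by $\mu$, loops on the second stage surfaces, and meridians and double point loops of the caps. The second stage surfaces live in $T_{2-2}$ and the caps are built from core disks of $T_{3-3}$. Therefore each of these loops maps into the image of $\pi_1$ of a region lying below the fourth stage. In $T_{1-4}$ such loops are nullhomotopic, because each lies in a stage whose attaching circle, together with its tip circles, bounds disks in the next stage. Concretely, the double point loops and meridians of caps in $T_3$ are tip circles or meridians of $T_3$, and these bound in $T_4$. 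The conclusion is that the image is generated by $\mu$ together with elements commuting with $\mu$ that are trivial, so it is a quotient of $\bbZ$. In particular it has subexponential (linear) growth, which is exactly the conclusion of \cite[Lemma~4.2--4.3]{chapowell} in the Casson case.

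\textbf{Main obstacle.} The delicate point is Step~2 for banded towers. Tubing across the triangular $0$-framed 2-handles (Figure~\ref{fig:gropestage3}) and pushing sheets into $B^4$ may introduce extra meridian loops around those 2-handles and bands, and we must verify that these loops also die in $T_{1-4}$. I would handle this by noting that the complement of a 2-handle core is its cocore region. Its meridian is isotopic to a curve on the band, which is a combination of dotted-circle meridians, that is, of double point loops. By the triangular property these are expressible through tip circles, which bound in the next stage. With this checked, Theorem~\ref{thm:chapowelldisk} yields the flat disk in $T_{1-4}$. The statement for $TH$ follows from Proposition~\ref{prop:THhomeo}, which shows that $TH$ is a banded Casson handle.
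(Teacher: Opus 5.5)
Your proposal is correct and is essentially the paper's argument. Lemma~\ref{lem:grope} gives $G_2^c\subset T_{1-3}$; Lemma~\ref{lem:pi1}, together with the fact that tip circles die in the next stage, shows the image in $\pi_1(T_{1-4}\setminus\Sigma)$ is cyclic and generated by $\mu$; and Theorem~\ref{thm:chapowelldisk} then gives the disk. The paper shortens your Step~2 by noting that $\nu G_2^c\setminus\Sigma\subset T_{1-3}\setminus\Sigma$ and that $\pi_1(T_{1-3}\setminus\Sigma)\to\pi_1(T_{1-4}\setminus\Sigma)$ already has image $\langle\mu\rangle\cong\bbZ$. This also makes your ``main obstacle'' moot: the triangular 2-handles are not deleted, so meridians of their cores are trivially nullhomotopic, and any such loops lie in $T_{1-3}\setminus\Sigma$ anyway.
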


\begin{proof}
     Lemma \ref{lem:grope} gives a properly embedded immersed capped grope $G_2^c$ in $T_{1-3}$ bounded by the framed attaching circle $C$. Write $\Sigma$ for its first stage surface. Since $\pi_1(T_{1-3})$ is generated by the top stage tip circles, the inclusion map $\pi_1(T_{1-3}) \to \pi_1(T_{1-4})$ is trivial. By Lemma \ref{lem:pi1} it follows that the map
     $\pi_1(T_{1-3}\setminus \Sigma) \to \pi_1(T_{1-4}\setminus \Sigma)$
     induced by inclusion has image $\bbZ$ generated by a meridian of $\Sigma$. Since $\nu G_2^c \subset T_{1-3}$ and $\pi_1(\nu G_2^c\setminus \Sigma)$ is generated by the caps' double point loops (which are all homotopic to tip circles of $T_{1-3}$) together with a meridian to $\Sigma$, it follows that the inclusion induced map
     $\pi_1(\nu G_2^c\setminus \Sigma) \to \pi_1(T_{1-4}\setminus \Sigma)$ has image the same $\bbZ$. Theorem \ref{thm:chapowelldisk} now gives the desired disk.
\end{proof}

Using Theorem~\ref{thm:chapowell}, we prove Theorem~\ref{thm:TH}, and slice the links of Theorem~\ref{thm:links} as in the proof of \cite[Theorem F]{chapowell}.

\begin{proof}[Proof of Theorem \ref{thm:TH}]
    The proof follows from  Propositions~\ref{prop:THhomeo} and \ref{prop:notaplumbedhandle}, and Theorem~\ref{thm:chapowell}.
\end{proof}

\begin{proof}[Proof of Theorem \ref{thm:links}]
Given a height 4 banded Casson tower $T_{1-4}$, draw a Kirby diagram and cancel all possible 1-2-handle pairs. This gives a diagram of $B^4$ where we see the attaching circle of $T_{1-4}$ linking band-sums of the top stage dotted circles, in a 4-fold banded ramified Whitehead pattern applied to one component of a Hopf link. Any such pattern can be obtained by choosing an appropriate $T$ (e.g. compare Figures~\ref{fig:link} and \ref{fig:triangular}). Note, this Kirby diagram determines an embedding of $T_{1-4}$ into $B^4$: it is exactly the complement of the slice disks corresponding to the band-summed dotted circles. Since $T_{1-4}$ has a flat core disk, the desired link is topologically slice.  
\end{proof}

\bibliographystyle{abbrv}
\bibliography{mybib}

@article{freedmandemichelis,
author = {Stefano De Michelis and Michael H. Freedman},
title = {{Uncountably many exotic $\mathbf{R}^4$'s in standard 4-space}},
volume = {35},
journal = {Journal of Differential Geometry},
number = {1},
publisher = {Lehigh University},
pages = {219 -- 254},
year = {1992},
doi = {10.4310/jdg/1214447810},
URL = {https://doi.org/10.4310/jdg/1214447810}
}

@article{endsofmaps,
  title     = {Ends of maps. {III}. {D}imensions 4 and 5},
  author    = {Quinn, Frank},
  journal   = {Journal of Differential Geometry},
  volume    = {17},
  number    = {3},
  pages     = {503--521},
  year      = {1982},
  publisher = {Lehigh University},
  doi       = {10.4310/jdg/1214437139},
  url       = {https://projecteuclid.org/journals/journal-of-differential-geometry/volume-17/issue-3/Ends-of-maps-III-Dimensions-4-and-5/10.4310/jdg/1214437139.full}
}

@book{diskembedding,
    author = {Behrens, Stefan and Kalmar, Boldizsar and Kim, Min Hoon and Powell, Mark and Ray, Arunima},
    title = {The Disc Embedding Theorem},
    publisher = {Oxford University Press},
    year = {2021},
    month = {07},
    isbn = {9780198841319},
    doi = {10.1093/oso/9780198841319.001.0001},
    url = {https://doi.org/10.1093/oso/9780198841319.001.0001},
}

@article{gadgil,
title = {Open manifolds, {O}zsv\'ath--{S}zab\'o invariants and exotic $\mathbb{R}^4$'s},
journal = {Expositiones Mathematicae},
volume = {28},
number = {3},
pages = {254-261},
year = {2010},
issn = {0723-0869},
doi = {https://doi.org/10.1016/j.exmath.2009.09.002},
url = {https://www.sciencedirect.com/science/article/pii/S0723086909000577},
author = {Siddhartha Gadgil}
}

@article{bizacagompf,
author = {{\v Z}arko Bi{\v z}aca and Robert E. Gompf},
title = {{Elliptic surfaces and some simple exotic ${\bf R}\sp 4$'s}},
volume = {43},
journal = {Journal of Differential Geometry},
number = {3},
publisher = {Lehigh University},
pages = {458 -- 504},
year = {1996},
doi = {10.4310/jdg/1214458322},
URL = {https://doi.org/10.4310/jdg/1214458322}
}

@article{freedman,
author = {Michael Hartley Freedman},
title = {{The topology of four-dimensional manifolds}},
volume = {17},
journal = {Journal of Differential Geometry},
number = {3},
publisher = {Lehigh University},
pages = {357 -- 453},
year = {1982},
doi = {10.4310/jdg/1214437136},
URL = {https://doi.org/10.4310/jdg/1214437136}
}

@book {gompfstipsicz,
    AUTHOR = {Gompf, Robert E. and Stipsicz, Andr\'as I.},
     TITLE = {{$4$}-manifolds and {K}irby calculus},
    SERIES = {Graduate Studies in Mathematics},
    VOLUME = {20},
 PUBLISHER = {American Mathematical Society, Providence, RI},
      YEAR = {1999},
     PAGES = {xvi+558},
      ISBN = {0-8218-0994-6},
   MRCLASS = {57N13 (14J80 32Q55 57-02 57R17 57R57 57R65)},
  MRNUMBER = {1707327},
MRREVIEWER = {Nikolai\ N.\ Saveliev},
       DOI = {10.1090/gsm/020},
       URL = {https://doi.org/10.1090/gsm/020},
}

@article{gompfinfinite,
author = {Robert E. Gompf},
title = {{An infinite set of exotic $\mathbf{R}^4$'s}},
volume = {21},
journal = {Journal of Differential Geometry},
number = {2},
publisher = {Lehigh University},
pages = {283 -- 300},
year = {1985},
doi = {10.4310/jdg/1214439566},
URL = {https://doi.org/10.4310/jdg/1214439566}
}

@misc{AIMlist,
  title        = {Smooth concordance classes of topologically slice knots},
  author       = {{American Institute of Mathematics}},
  howpublished = {\url{http://aimpl.org/concordsliceknot/5/}},
  note         = {AIM Problem List, Problem 5.4: ``Explicitly describe some non-smooth topologically slice disc.'' Accessed 9 June 2026},
}

@article {ManolescuMarengon,
    AUTHOR = {Manolescu, Ciprian and Marengon, Marco and Sarkar, Sucharit
              and Willis, Michael},
     TITLE = {A generalization of {R}asmussen's invariant, with applications
              to surfaces in some four-manifolds},
   JOURNAL = {Duke Math. J.},
  FJOURNAL = {Duke Mathematical Journal},
    VOLUME = {172},
      YEAR = {2023},
    NUMBER = {2},
     PAGES = {231--311},
      ISSN = {0012-7094,1547-7398},
   MRCLASS = {57K18 (57K40)},
  MRNUMBER = {4541332},
MRREVIEWER = {William\ Rushworth},
       DOI = {10.1215/00127094-2022-0039},
       URL = {https://doi.org/10.1215/00127094-2022-0039},
}

@article{endkhovanov,
      title={End {K}hovanov homology and exotic {L}agrangian planes}, 
      author={Yikai Teng},
      year={2025},
      eprint={2510.01151},
      journal={arXiv: 2510.01151},
      primaryClass={math.GT},
      url={https://arxiv.org/abs/2510.01151}, 
}

@article {akbulutkirby,
    AUTHOR = {Akbulut, Selman and Kirby, Robion},
     TITLE = {Branched covers of surfaces in {$4$}-manifolds},
   JOURNAL = {Math. Ann.},
  FJOURNAL = {Mathematische Annalen},
    VOLUME = {252},
      YEAR = {1979/80},
    NUMBER = {2},
     PAGES = {111--131},
      ISSN = {0025-5831,1432-1807},
   MRCLASS = {57M12 (14J99 57N15)},
  MRNUMBER = {593626},
MRREVIEWER = {R.\ Mandelbaum},
       DOI = {10.1007/BF01420118},
       URL = {https://doi.org/10.1007/BF01420118},
}

@article {chapowell,
    AUTHOR = {Cha, Jae Choon and Powell, Mark},
     TITLE = {Casson towers and slice links},
   JOURNAL = {Invent. Math.},
  FJOURNAL = {Inventiones Mathematicae},
    VOLUME = {205},
      YEAR = {2016},
    NUMBER = {2},
     PAGES = {413--457},
      ISSN = {0020-9910,1432-1297},
   MRCLASS = {57N13 (57M25 57N70)},
  MRNUMBER = {3529119},
MRREVIEWER = {Vyacheslav\ S.\ Krushkal},
       DOI = {10.1007/s00222-015-0639-z},
       URL = {https://doi.org/10.1007/s00222-015-0639-z},
}

@article{elihomlidman,
      title={Distinguishing exotic $\mathbb{R}^4$'s with {H}eegaard {F}loer homology}, 
      author={Sean Eli and Jennifer Hom and Tye Lidman},
      year={2026},
      eprint={2601.08767},
      journal={arXiv: 2601.08767},
      primaryClass={math.GT},
      url={https://arxiv.org/abs/2601.08767}, 
}

@misc{knotjob,
  title = {{K}not{J}ob},
  author = {Dirk Sch{\"u}tz},
  howpublished = {Department of Mathematical Sciences, Durham University},
  url = {https://www.maths.dur.ac.uk/users/dirk.schuetz/knotjob.html}
}

@article {eliashbergfilling,
    AUTHOR = {Eliashberg, Yakov},
     TITLE = {A few remarks about symplectic filling},
   JOURNAL = {Geom. Topol.},
  FJOURNAL = {Geometry and Topology},
    VOLUME = {8},
      YEAR = {2004},
     PAGES = {277--293},
      ISSN = {1465-3060,1364-0380},
   MRCLASS = {57R17 (53D35 57R57)},
  MRNUMBER = {2023279},
MRREVIEWER = {A.\ Stipsicz},
       DOI = {10.2140/gt.2004.8.277},
       URL = {https://doi.org/10.2140/gt.2004.8.277},
}

@article{Ray,
   title={Casson towers and filtrations of the smooth knot concordance group},
   volume={15},
   ISSN={1472-2747},
   url={http://dx.doi.org/10.2140/agt.2015.15.1119},
   DOI={10.2140/agt.2015.15.1119},
   number={2},
   journal={Algebraic \& Geometric Topology},
   publisher={Mathematical Sciences Publishers},
   author={Ray, Arunima},
   year={2015},
   month=Apr, pages={1119–1159} }

@article {OSgenus,
    AUTHOR = {Ozsv\'ath, Peter and Szab\'o, Zolt\'an},
     TITLE = {Holomorphic disks and genus bounds},
   JOURNAL = {Geom. Topol.},
  FJOURNAL = {Geometry and Topology},
    VOLUME = {8},
      YEAR = {2004},
     PAGES = {311--334},
      ISSN = {1465-3060,1364-0380},
   MRCLASS = {57M27 (53D35 57N10 57R58)},
  MRNUMBER = {2023281},
MRREVIEWER = {Jacob\ Andrew\ Rasmussen},
       DOI = {10.2140/gt.2004.8.311},
       URL = {https://doi.org/10.2140/gt.2004.8.311},
}

@book {confoliations,
    AUTHOR = {Eliashberg, Yakov M. and Thurston, William P.},
     TITLE = {Confoliations},
    SERIES = {University Lecture Series},
    VOLUME = {13},
 PUBLISHER = {American Mathematical Society, Providence, RI},
      YEAR = {1998},
     PAGES = {x+66},
      ISBN = {0-8218-0776-5},
   MRCLASS = {53C15 (57N10 57R30 58F05)},
  MRNUMBER = {1483314},
MRREVIEWER = {Hansj\"org\ Geiges},
       DOI = {10.1090/ulect/013},
       URL = {https://doi.org/10.1090/ulect/013},
}

@article {gabaifoliations,
    AUTHOR = {Gabai, David},
     TITLE = {Foliations and the topology of {$3$}-manifolds},
   JOURNAL = {Bull. Amer. Math. Soc. (N.S.)},
  FJOURNAL = {American Mathematical Society. Bulletin. New Series},
    VOLUME = {8},
      YEAR = {1983},
    NUMBER = {1},
     PAGES = {77--80},
      ISSN = {0273-0979,1088-9485},
   MRCLASS = {57R30 (57N10)},
  MRNUMBER = {682826},
MRREVIEWER = {Ulrich\ Hirsch},
       DOI = {10.1090/S0273-0979-1983-15089-9},
       URL = {https://doi.org/10.1090/S0273-0979-1983-15089-9},
}

@article {OSfourmanifolds,
    AUTHOR = {Ozsv\'ath, Peter and Szab\'o, Zolt\'an},
     TITLE = {Holomorphic triangles and invariants for smooth
              four-manifolds},
   JOURNAL = {Adv. Math.},
  FJOURNAL = {Advances in Mathematics},
    VOLUME = {202},
      YEAR = {2006},
    NUMBER = {2},
     PAGES = {326--400},
      ISSN = {0001-8708,1090-2082},
   MRCLASS = {57R58 (57M27)},
  MRNUMBER = {2222356},
       DOI = {10.1016/j.aim.2005.03.014},
       URL = {https://doi.org/10.1016/j.aim.2005.03.014},
}

@article {OSsymplectic,
    AUTHOR = {Ozsv\'ath, Peter and Szab\'o, Zolt\'an},
     TITLE = {Holomorphic triangle invariants and the topology of symplectic
              four-manifolds},
   JOURNAL = {Duke Math. J.},
  FJOURNAL = {Duke Mathematical Journal},
    VOLUME = {121},
      YEAR = {2004},
    NUMBER = {1},
     PAGES = {1--34},
      ISSN = {0012-7094,1547-7398},
   MRCLASS = {57R17 (53D35 57N13 57R57)},
  MRNUMBER = {2031164},
MRREVIEWER = {Stanislav\ Jabuka},
       DOI = {10.1215/S0012-7094-04-12111-6},
       URL = {https://doi.org/10.1215/S0012-7094-04-12111-6},
}

@article {liscamatic,
    AUTHOR = {Lisca, P. and Mati\'c, G.},
     TITLE = {Stein {$4$}-manifolds with boundary and contact structures},
      NOTE = {Symplectic, contact and low-dimensional topology (Athens, GA,
              1996)},
              
   JOURNAL = {Topology Appl.},
  FJOURNAL = {Topology and its Applications},
    VOLUME = {88},
      YEAR = {1998},
    NUMBER = {1-2},
     PAGES = {55--66},
      ISSN = {0166-8641,1879-3207},
   MRCLASS = {57R57 (32E10 57M50 57R15)},
  MRNUMBER = {1634563},
MRREVIEWER = {John\ B.\ Etnyre},
       DOI = {10.1016/S0166-8641(97)00198-3},
       URL = {https://doi.org/10.1016/S0166-8641(97)00198-3},
}

@incollection {cassonthree,
    AUTHOR = {Casson, Andrew J.},
     TITLE = {Three lectures on new-infinite constructions in
              {$4$}-dimensional manifolds},
 BOOKTITLE = {\`A{} la recherche de la topologie perdue},
    SERIES = {Progr. Math.},
    VOLUME = {62},
     PAGES = {201--244},
      NOTE = {With an appendix by L. Siebenmann},
 PUBLISHER = {Birkh\"auser Boston, Boston, MA},
      YEAR = {1986},
      ISBN = {0-8176-3329-4},
   MRCLASS = {57N13},
  MRNUMBER = {900253},
}

@article {heddenwatson,
    AUTHOR = {Hedden, Matthew and Watson, Liam},
     TITLE = {On the geography and botany of knot {F}loer homology},
   JOURNAL = {Selecta Math. (N.S.)},
  FJOURNAL = {Selecta Mathematica. New Series},
    VOLUME = {24},
      YEAR = {2018},
    NUMBER = {2},
     PAGES = {997--1037},
      ISSN = {1022-1824,1420-9020},
   MRCLASS = {57M27 (57R58)},
  MRNUMBER = {3782416},
MRREVIEWER = {David\ Shea\ Vela-Vick},
       DOI = {10.1007/s00029-017-0351-5},
       URL = {https://doi.org/10.1007/s00029-017-0351-5},
}

@article {wangcrossing,
    AUTHOR = {Wang, Joshua},
     TITLE = {The cosmetic crossing conjecture for split links},
   JOURNAL = {Geom. Topol.},
  FJOURNAL = {Geometry \& Topology},
    VOLUME = {26},
      YEAR = {2022},
    NUMBER = {7},
     PAGES = {2941--3053},
      ISSN = {1465-3060,1364-0380},
   MRCLASS = {57K10 (57K18)},
  MRNUMBER = {4540900},
MRREVIEWER = {Antonio\ Alfieri},
       DOI = {10.2140/gt.2022.26.2941},
       URL = {https://doi.org/10.2140/gt.2022.26.2941},
}

@article {gompfproper,
    AUTHOR = {Gompf, Robert E.},
     TITLE = {Topologically trivial proper 2-knots},
   JOURNAL = {Geom. Topol.},
  FJOURNAL = {Geometry \& Topology},
    VOLUME = {29},
      YEAR = {2025},
    NUMBER = {1},
     PAGES = {71--125},
      ISSN = {1465-3060,1364-0380},
   MRCLASS = {57K45 (57K40 57R40)},
  MRNUMBER = {4846638},
MRREVIEWER = {Inasa\ Nakamura},
       DOI = {10.2140/gt.2025.29.71},
       URL = {https://doi.org/10.2140/gt.2025.29.71},
}

@book {freedmanquinn,
    AUTHOR = {Freedman, Michael H. and Quinn, Frank},
     TITLE = {Topology of 4-manifolds},
    SERIES = {Princeton Mathematical Series},
    VOLUME = {39},
 PUBLISHER = {Princeton University Press, Princeton, NJ},
      YEAR = {1990},
     PAGES = {viii+259},
      ISBN = {0-691-08577-3},
   MRCLASS = {57N13 (57-02)},
  MRNUMBER = {1201584},
MRREVIEWER = {Ian\ Hambleton},
}

@inproceedings {freedmanICM,
    AUTHOR = {Freedman, Michael H.},
     TITLE = {The disk theorem for four-dimensional manifolds},
 BOOKTITLE = {Proceedings of the {I}nternational {C}ongress of
              {M}athematicians, {V}ol.\ 1, 2 ({W}arsaw, 1983)},
     PAGES = {647--663},
 PUBLISHER = {PWN, Warsaw},
      YEAR = {1984},
      ISBN = {83-01-05523-5},
   MRCLASS = {57N15 (57R65)},
  MRNUMBER = {804721},
MRREVIEWER = {Frank\ Quinn},
}

@article {gompfsteinisotopy,
    AUTHOR = {Gompf, Robert E.},
     TITLE = {Creating {S}tein surfaces by topological isotopy},
   JOURNAL = {J. Differential Geom.},
  FJOURNAL = {Journal of Differential Geometry},
    VOLUME = {125},
      YEAR = {2023},
    NUMBER = {1},
     PAGES = {121--171},
      ISSN = {0022-040X,1945-743X},
   MRCLASS = {32Q28 (57R52)},
  MRNUMBER = {4643804},
MRREVIEWER = {Rafael\ B.\ Andrist},
       DOI = {10.4310/jdg/1695236593},
       URL = {https://doi.org/10.4310/jdg/1695236593},
}

@article {siebenmann,
    AUTHOR = {Siebenmann, L. C.},
     TITLE = {Infinite simple homotopy types},
      NOTE = {Nederl. Akad. Wetensch. Proc. Ser. A {\bf 73}},
   JOURNAL = {Indag. Math.},
  FJOURNAL = {},
    VOLUME = {32},
      YEAR = {1970},
     PAGES = {479--495},
   MRCLASS = {55.40},
  MRNUMBER = {287542},
MRREVIEWER = {A.\ J.\ Sieradski},
}

@article {gompfsteinhandlebody,
    AUTHOR = {Gompf, Robert E.},
     TITLE = {Handlebody construction of {S}tein surfaces},
   JOURNAL = {Ann. of Math. (2)},
  FJOURNAL = {Annals of Mathematics. Second Series},
    VOLUME = {148},
      YEAR = {1998},
    NUMBER = {2},
     PAGES = {619--693},
      ISSN = {0003-486X,1939-8980},
   MRCLASS = {57R17 (32E10 57R65)},
  MRNUMBER = {1668563},
MRREVIEWER = {Selman\ Akbulut},
       DOI = {10.2307/121005},
       URL = {https://doi.org/10.2307/121005},
}

@article {gompfgenera,
    AUTHOR = {Gompf, Robert E.},
     TITLE = {Minimal genera of open 4-manifolds},
   JOURNAL = {Geom. Topol.},
  FJOURNAL = {Geometry \& Topology},
    VOLUME = {21},
      YEAR = {2017},
    NUMBER = {1},
     PAGES = {107--155},
      ISSN = {1465-3060,1364-0380},
   MRCLASS = {57R10 (32Q28)},
  MRNUMBER = {3608710},
MRREVIEWER = {Arkadiy\ Skopenkov},
       DOI = {10.2140/gt.2017.21.107},
       URL = {https://doi.org/10.2140/gt.2017.21.107},
}

@article{etnyrefillings,
author = {John B Etnyre},
title = {{On symplectic fillings}},
volume = {4},
journal = {Algebraic \& Geometric Topology},
number = {1},
publisher = {MSP},
pages = {73 -- 80},
year = {2004},
doi = {10.2140/agt.2004.4.73},
URL = {https://doi.org/10.2140/agt.2004.4.73}
}

@misc{knotinfo,
Author = {Livingston, Charles and Moore, Allison H.},
howpublished = {URL: \url{knotinfo.org}},
Month = {May},
Title = {{K}not{I}nfo: Table of Knot Invariants},
Year = {2016},
}
\Addresses
\end{document}